\numberwithin{equation}{section}
\newcommand{\uloopr}[1]{\ar@'{@+{[0,0]+(-4,5)}@+{[0,0]+(0,10)}@+{[0,0] +(4,5)}}^{#1}}
\newcommand{\uloopd}[1]{\ar@'{@+{[0,0]+(5,4)}@+{[0,0]+(10,0)}@+{[0,0]+ (5,-4)}}^{#1}}
\newcommand{\dloopr}[1]{\ar@'{@+{[0,0]+(-4,-5)}@+{[0,0]+(0,-10)}@+{[0, 0]+(4,-5)}}_{#1}}
\newcommand{\dloopd}[1]{\ar@'{@+{[0,0]+(-5,4)}@+{[0,0]+(-10,0)}@+{[0,0 ]+(-5,-4)}}_{#1}}
\newcommand{\luloop}[1]{\ar@'{@+{[0,0]+(-8,2)}@+{[0,0]+(-10,10)}@+{[0, 0]+(2,2)}}^{#1}}
\DeclareSymbolFont{SY}{U}{psy}{m}{n}
\DeclareMathSymbol{\emptyset}{\mathord}{SY}{'306}
\DeclareMathSymbol{\newtimes}{\mathbin}{SY}{'264}
\newcommand{\R}{\mathbb{R}}
\newcommand{\C}{\mathbb{C}}
\newcommand{\K}{\mathbb{K}}
\newcommand{\Z}{\mathbb{Z}}
\newcommand{\N}{\mathbb{N}}
\newcommand{\cA}{{\mathcal A}}
\newcommand{\cB}{{\mathcal B}}
\newcommand{\cF}{{\mathcal F}}
\newcommand{\cP}{{\mathcal P}}
\newcommand{\ol}{\overline}
\renewcommand{\1}{\mathbbm 1}
\newtheorem{theorem}{Theorem}[section]{\bf}{\it}
\newtheorem{proposition}[theorem]{Proposition}{\bf}{\it}
\newtheorem{corollary}[theorem]{Corollary}{\bf}{\it}
\newtheorem{lemma}[theorem]{Lemma}{\bf}{\it}
{\bf}{\it}
{\bf}{\it}
{\bf}{\it}
\theoremstyle{definition}
\newtheorem{definition}[theorem]{Definition}
\newtheorem{remark}[theorem]{Remark}
\newtheorem{example}[theorem]{Example}
\theoremstyle{plain}
\newcounter{theoremintro}
\newtheorem{thmintro}[theoremintro]{Theorem}
\theoremstyle{definition}
\newtheorem{defnintro}[theoremintro]{Definition}
\DeclareMathAlphabet{\Ma}{U}{msa}{m}{n}
\DeclareMathAlphabet{\Mb}{U}{msb}{m}{n}
\DeclareMathAlphabet{\Meuf}{U}{euf}{m}{n}
\DeclareSymbolFont{ASMa}{U}{msa}{m}{n}
\DeclareSymbolFont{ASMb}{U}{msb}{m}{n}
\DeclareMathSymbol{\hrist}{\mathord}{ASMa}{"16}
\DeclareMathSymbol{\varkappa}{\mathalpha}{ASMb}{"7B}
\DeclareMathSymbol{\CrPr}{\mathord}{ASMb}{"6F}
\def\got#1{\Meuf{#1}}
\def\ot #1.{{\got{#1}}}
\title{Amenability of coarse spaces and $\mathbb{K}$-algebras}
\author[Pere Ara]{Pere Ara$^{1}$}
\address{Department of Mathematics,
  Universitat Aut\`onoma de Barcelona, 08193 Bellaterra (Bar\-ce\-lona), Spain}
\email{para@mat.uab.cat}
\author[Kang Li]{Kang Li$^{2}$}
\address{Department of Mathematics, University of M\"{u}nster,
  Einsteinstr. 62, 48149 M\"{u}nster, Germany}
 \email{lik@uni-muenster.de} 
\author[Fernando Lled\'o]{Fernando Lled\'o$^{3}$}
\address{Department of Mathematics, University Carlos~III Madrid,
  Avda.~de la Universidad~30, 28911 Legan\'es (Madrid), Spain
  and Instituto de Ciencias Matem\'{a}ticas (CSIC - UAM - UC3M - UCM).}
\email{flledo@math.uc3m.es}
\author[Jianchao Wu]{Jianchao Wu$^{4}$}
\address{Department of Mathematics, Penn State University, 109 McAllister Building, University Park, PA 16802, USA}
\email{jianchao.wu@psu.edu}
\date{\today}
\thanks{{$^{1}$ } Supported by the grants DGI MICIIN MTM2011-28992-C02-01 and MINECO MTM2014-53644-P}
\thanks{{$^{2}$} Supported by ERC Advanced Grant no.~OAFPG 247321, the Danish National Research Foundation through the Centre for Symmetry and Deformation (DNRF92) and the Danish Council for Independent Research (DFF-5051-00037)}
\thanks{{$^{3}$} Supported by projects DGI MTM2012-36732-C03-01, MTM2014-54692-P and Severo Ochoa SEV-2015-0554 of the Spanish Ministry of Economy and Competition (MINECO), Spain}
\thanks{{$^{4}$} Supported by SFB 878 {\em Groups, Geometry and Actions} and ERC Advanced Grant ToDyRiC 267079}
\subjclass[2010]{16P90, 43A07, 37A15, 20F65, 16S99}
\keywords{amenability, paradoxical decompositions, F\o lner nets, coarse spaces, unital $\mathbbm{K}$-algebras,
Leavitt path algebras, translation algebras}
\begin{document}

\begin{abstract}
In this article we analyze the notions of amenability and paradoxical decomposition from an 
algebraic perspective. We consider this dichotomy for locally finite extended metric 
spaces and for general algebras over fields. In the context of algebras we 
also study the relation of amenability with proper infiniteness. We apply our general analysis
to two important classes of algebras: the unital Leavitt path algebras and the translation 
algebras on locally finite extended metric spaces. In particular, we show that the amenability
of a metric space is equivalent to the algebraic amenability of the corresponding translation 
algebra.
\end{abstract}

\maketitle

\tableofcontents

\section{Introduction}\label{sec:intro}
Given a group $\Gamma$, von~Neumann defined in Section~1 of \cite{Neumann29} the notion of
{\em allgemeiner Mittelwert auf $\Gamma$} in terms of a mean (i.e., a finitely additive
probability measure) on $\Gamma$ which is left invariant under the action of $\Gamma$ on itself.
This property of the group eventually came to be called amenability \cite{Day57}. Its absence
was recognized by von~Neumann as a fundamental reason behind phenomena like the Banach-Tarski paradox \textemdash~ 
a paradoxical decomposition of the unit ball in $\R^3$. In fact, there is, for any group $\Gamma$, a complete dichotomy between amenability and the existence of paradoxical decompositions of $\Gamma$ in a natural sense, and the Banach-Tarski paradox may be essentially attributed to the fact that 
the (discrete) group SO$(3)$ of 
isometries of the ball contains a subgroup which is isomorphic to the free group 
$\mathbbm{F}_2$ on two generators, whose evident paradoxicality implies that of the former. By contrast, the group SO$(2)$ of isometries of the unit disc, like any other abelian group, is amenable, and thus not paradoxical. 
Later, F\o lner gave an equivalent characterization of 
amenability by the existence of a net $\{\Gamma_i\}_{i \in I}$ of non-empty
finite subsets of the group that, under the left translations of the group on itself, becomes more and more invariant in a statistical sense
(cf.,  \cite{Foelner55}). More precisely, one has that $\Gamma$ is amenable if and only if there exists
a net $\{\Gamma_i\}_{i \in I}$ of non-empty finite subsets with
\begin{equation*}\label{eq:foelner-group}
\lim_i\frac{|\gamma\Gamma_i \cup \Gamma_i|}{|\Gamma_i|}=1 \;,  
   \quad\text{for~any~} \gamma \in \Gamma \; ,
\end{equation*}
where $|\cdot|$ denotes the cardinality of the subset. These so-called F\o lner nets thus provide a good way to approximate an amenable infinite structure with finite substructures, opening the door to a wide range of applications. Moreover, thanks to its simplicity, F\o lner's characterization also lends itself to various generalizations, as we shall see below.
Since then, the concept of amenability has become central in many areas of 
mathematics like ergodic theory, geometry, the theory of operator algebras, etc.
Some classical references on this topic are \cite{Wagon94,bPaterson88,bRunde02}.

This paper studies amenability and paradoxical decompositions from an algebraic perspective. 
To provide a source of inspiration, we start with a review of amenability for metric spaces, a concept defined by Block and Weinberger in
\cite{Block-Weinberger-92} through a natural generalization of 
F\o lner's characterization to (uniformly) locally finite metric spaces \textemdash~similar ideas go as far back as the work of Ahlfors (\cite[II]{Ahlfors}) under the term \emph{Aussch\"{o}pfungen einer offenen Fl\"{a}che (exhaustions of an open surface)}. More precisely, a locally finite metric space $(X,d)$ is said to be amenable if there
exists a net $\{F_i\}_{i\in I}$ of finite non-empty subsets such that 
  \[
   \lim_{i} \frac{| N_R F_i |}{|F_i|} = 1 \;,  
   \quad\text{for~any~} R > 0 \;,
  \]
where $N_R F_i:= \{x\in X:d(x,F_i)\leq R\}$, the $R$-neighborhood of $F_i$ (cf., Definition~\ref{def:metric-amenability} and
Remark~\ref{rmk:charac-metric-amenability}). One of the key results in this setting, shown by Cecherini-Silberstein, Grigorchuk and de la Harpe in
\cite{Silberstein-Grigorchuk-Harpe-99}, states that in analogy with the well-known result for groups, 
the amenability of a metric space is equivalent to its non-paradoxicality, and also equivalent to the existence of an invariant mean, in a suitable sense 
(cf., Definition~\ref{def:invariant-mean} and Definition~\ref{def:paradoxical-decomposition}). For the convenience of the reader, we present a direct proof of the most interesting implication among them, namely that non-paradoxicality implies amenability, by adapting a proof in the group setting given in \cite{KL15} (cf., Theorem~\ref{theorem:amenable-extended-metric}). The key idea in it is a local-to-global technique that involves a variant of Hall's marriage theorem for sets of arbitrary cardinalities. A linearization of this technique will be applied later to prove a corresponding implication in the case of algebras over a field, which is the second main object of study in this article.

Let us fix a field $\K$. Elek introduced in \cite{Elek03} the notion of amenability for finitely generated unital algebras over $\K$, and proved some essential results in the case where the algebra has no zero-divisors. The main definition he used also resembles F\o lner's characterization, with subsets replaced by linear subspaces, and cardinalities replaced by dimensions. We generalize this notion to $\K$-algebras of arbitrary dimensions and single out a more restrictive situation brought about by the additional requirement that the F\o lner net is exhaustive, which we term \emph{proper amenability}.

\begin{defnintro}[cf., Definition~\ref{def:alg-amenable2} and Remark~\ref{rmk:charac-alg-amen2}]\label{def:alg-amenable-intro}
Let $\K$ be a field. An $\K$-algebra $\cA$ is said to be (left) \emph{algebraically amenable} if there exists a net $\{W_i\}_{i\in I}$ of finite-$\K$-dimensional linear subspaces of $\cA$ such that 
 \[
  \lim_{i} \frac{\dim_{\K}(a W_i +W_i)}{\dim_{\K}(W_i)} = 1 \;,  
   \quad\text{for~any~}  a\in\cA\;.
 \] 
 If the net $\{W_i\}_{i\in I}$ can be chosen to satisfy the additional condition that for any $a \in \cA$, there is $i\in I$ such that 
 \[
  a \in \bigcap_{j \geq i} W_j \; ,
 \]
 then $\cA$ is said to be (left) \emph{properly algebraically amenable}.
\end{defnintro}

Following Elek's pioneering work, a number of authors have dealt with amenability for algebras from
different perspectives, such as Bartholdi \cite{Bart}, Cecherini-Silberstein and Saimet-Vaillant \cite{Cec-Sam-08}, and D'Adderio \cite{D'adderio} (building on work 
of Gromov \cite{Gromov08}).
Special attention has been paid by Elek to the case of division algebras over a field, see \cite{Elek06,Elek16,Elekunp}. In particular, the notion of 
amenability for division algebras plays an important role in the study of infinite dimensional representations of a finite-dimensional algebra over a finite field undertaken in \cite{Elekunp}.

The fundamental result of Elek in \cite{Elek03} is the equivalence, for finitely generated unital $\K$-algebras without zero-divisors, among three
characterizations of algebraic amenability analogous to those in the cases of groups and metric spaces: algebraic amenability \`{a} la F\o lner as
given in Definition~\ref{def:alg-amenable-intro}, the non-existence of paradoxical decompositions, and an analogue of von~Neumann's invariant means
called invariant \emph{dimension measures}. The definitions of the latter two notions enlist the involvement of linear bases of the algebra. We offer
here generalizations of these notions (cf., Definition~\ref{def:paradoxicality} and Definition~\ref{def:dimension-measure}) and of Elek's theorem to encompass
all $\K$-algebras regardless of the size of the generating set or the existence of zero-divisors or a unit. Notably, invariant dimension measures in
our definition exhibit delicate deviations from von~Neumann's invariant means on a group, owing to the fact that the lattice of subspaces of an
algebra is not distributive, unlike the lattice of subsets of a group. For the sake of brevity, here we state the generalized theorem only for
countably dimensional $\K$-algebras. 
\begin{thmintro}[cf., Theorem~\ref{theorem:aa-characterized} and Corollary~\ref{cor:charac-amen-countable}]\label{theorem:aa-characterized-intro}
 Let $\cA$ be a countably dimensional $\K$-algebra over a field $\K$. Then the following are equivalent:
 \begin{enumerate}
  \item $\cA$ is algebraically amenable.
  \item There is a linear basis of $\cA$ that cannot be paradoxically decomposed.
  \item There exists an invariant dimension-measure on $\cA$ associated to some linear basis.
 \end{enumerate}
\end{thmintro}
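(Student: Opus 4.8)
The plan is to prove the cyclic chain $(1)\Rightarrow(3)\Rightarrow(2)\Rightarrow(1)$, the last implication being by far the most involved. Countable dimensionality lets us fix a linear basis $B=\{b_n\}_{n\in\N}$ of $\cA$ together with the exhaustion by the finite-dimensional subspaces $V_n:=\langle b_1,\dots,b_n\rangle$; as a consequence every net appearing below may be taken to be a sequence, and the combinatorial heart of the argument reduces to the countable form of Hall's marriage theorem rather than the version for arbitrary cardinalities needed for the general statement of Theorem~\ref{theorem:aa-characterized}.

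For $(1)\Rightarrow(3)$ we pass to an ultralimit. Fix an algebraic F\o lner net $\{W_i\}_{i\in I}$ and a non-principal ultrafilter $\omega$ on $I$ refining the order filter, and set, for every subset $E\subseteq B$,
\[
 \mu(E):=\lim_{\omega}\frac{\dim_{\K}\!\bigl(\langle E\rangle\cap W_i\bigr)}{\dim_{\K}(W_i)}\,.
\]
Then $\mu$ is $[0,1]$-valued with $\mu(\emptyset)=0$ and $\mu(B)=1$, is monotone, and is superadditive on disjoint subsets, since $\langle E_1\sqcup E_2\rangle\cap W_i$ contains the direct sum $(\langle E_1\rangle\cap W_i)\oplus(\langle E_2\rangle\cap W_i)$. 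These are exactly the one-sided relations required by Definition~\ref{def:dimension-measure}; the corresponding equalities fail in general, reflecting the non-distributivity of the subspace lattice. Finally the F\o lner condition $\dim_{\K}(aW_i+W_i)/\dim_{\K}(W_i)\to1$ forces $aW_i$ to lie in $W_i$ up to a subspace of dimension $o(\dim_{\K}W_i)$, and carrying subsets of $B$ through the maps $x\mapsto ax$ with this estimate yields the invariance axiom of Definition~\ref{def:dimension-measure}. Thus $\mu$ is an invariant dimension-measure associated to $B$.

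For $(3)\Rightarrow(2)$, let $\mu$ be an invariant dimension-measure associated to a basis $B$; we show $B$ cannot be paradoxically decomposed in the sense of Definition~\ref{def:paradoxicality}. This is the Tarski-type argument: a paradoxical decomposition of $B$ splits it into finitely many pieces which, after translation by elements of $\cA$ along which the relevant restrictions are injective, refill $B$ twice over, and feeding this into $\mu$ while applying each of its axioms in the single direction Definition~\ref{def:dimension-measure} supplies forces $1=\mu(B)\ge 2\mu(B)=2$ (or an equivalent numerical absurdity). The only subtlety — and the reason this is not a verbatim copy of the group case — is that two-sided additivity is unavailable, so the inequalities must be chained in the order the axioms permit.

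The heart of the matter is $(2)\Rightarrow(1)$, which I would prove by contraposition: assuming $\cA$ is \emph{not} algebraically amenable, I show that \emph{every} linear basis of $\cA$ admits a paradoxical decomposition, contradicting $(2)$. First, failure of amenability is upgraded to a uniform expansion estimate: there are a finite subset $S\subseteq\cA$ and a constant $\varepsilon>0$ with $\dim_{\K}\!\bigl(W+\sum_{a\in S}aW\bigr)\ge(1+\varepsilon)\dim_{\K}(W)$ for every finite-dimensional $W\le\cA$ — otherwise a diagonal argument over a cofinal sequence of finite subsets of $B$ would produce a F\o lner sequence. Fixing any basis $B$, this uniform expansion becomes an incidence relation between two copies of $B$ via the maps $w\mapsto aw$ with $a\in S$, in which every finite subset of the source side has strictly more neighbours, with a definite excess ratio, on the target side. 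Running the linearization of the local-to-global marriage technique of Theorem~\ref{theorem:amenable-extended-metric} — replacing throughout \emph{finite subset} and \emph{cardinality} by \emph{finite-dimensional $B$-spanned subspace} and \emph{dimension}, and invoking Hall's theorem in its countable form — then converts this incidence into the partition of $B$ and the system of elements of $\cA$ realizing a paradoxical decomposition. I expect the principal obstacle to be exactly this last step: setting up the linearized marriage condition precisely enough that its conclusion is a genuine paradoxical decomposition in the sense of Definition~\ref{def:paradoxicality}. For the general, uncountably dimensional case (Theorem~\ref{theorem:aa-characterized}) one needs in addition the variant of Hall's theorem for families of arbitrary cardinality and a net-indexed version of the construction used for $(1)\Rightarrow(3)$.
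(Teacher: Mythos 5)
Your overall architecture coincides with the paper's: the cycle $(1)\Rightarrow(3)\Rightarrow(2)\Rightarrow(1)$, an ultralimit construction for the measure, a Tarski-type chain of inequalities for $(3)\Rightarrow(2)$, and for $(2)\Rightarrow(1)$ the contrapositive via a linearized Hall/Zorn marriage argument, which is exactly the paper's Proposition~\ref{prop:non-amenable-implies-paradoxic}. The genuine gap is in $(1)\Rightarrow(3)$. Axiom (iii) of Definition~\ref{def:dimension-measure} requires that for \emph{every} partition $L_1\sqcup\dots\sqcup L_m$ of the chosen basis one has $\sum_k\mu(\mathrm{span}(L_k))=1$; with your formula this amounts to $\sum_k\dim(\mathrm{span}(L_k)\cap W_i)=\dim(W_i)$ asymptotically along the ultrafilter, which fails for an arbitrary basis $B$ paired with an arbitrary F\o lner net $\{W_i\}$ (independence of the $\mathrm{span}(L_k)$ only gives the inequality $\le$). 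The paper fixes this by taking an \emph{increasing, exhausting} F\o lner sequence $W_1\subset W_2\subset\cdots$ and building the basis by successively enlarging bases of the $W_i$, so that each $W_i$ is spanned by basis vectors; you fix $B$ and the net independently, so (iii) is neither verified nor true in general.

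More seriously, a nested exhausting F\o lner sequence exists only when $\cA$ is \emph{properly} algebraically amenable, and algebraic amenability alone does not provide one (e.g.\ $\cA=\cB\oplus\K$ with $\cB$ non-amenable: the F\o lner subspaces are trapped near the finite-dimensional ideal). The paper therefore splits $(1)\Rightarrow(3)$ into two cases: when $\cA$ is properly amenable it runs the ultralimit construction on an adapted basis, and when $\cA$ is amenable but not properly amenable it invokes Theorem~\ref{theorem:when-automatically-proper-alg-amen} to produce a nonzero finite-dimensional left ideal $I$ and sets $\mu(A)=\dim(I\cap A)/\dim(I)$ relative to a basis extending a basis of $I$. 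Your proposal contains neither the case split nor this degenerate measure, and the single ultralimit formula cannot cover the non-proper case; this is precisely where the amenable/properly-amenable distinction, which the paper flags as essential to the theorem, enters. Two smaller repairs: the dimension-measure must be defined on \emph{all} linear subspaces (property (iv) is tested on subspaces $sA$ that need not be spanned by basis vectors), not only on spans of subsets of $B$; and in $(2)\Rightarrow(1)$ the expansion constant $1+\varepsilon$ must first be amplified past $2$, as in Lemma~\ref{lem:local-doubling}, before the Hall condition indexed by $I\times\{0,1\}$ can be met. Both are routine, but the case analysis in $(1)\Rightarrow(3)$ is a missing idea, not a routine one.
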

By removing the requirement of finite generation, unitality, and having no zero-divisor, we can greatly expand the scope of examples subject to the study of amenability. Of foremost 
interest to us in this paper are two classes of algebras associated to geometric data:
\begin{enumerate}
 \item \emph{Leavitt path algebras} constructed from directed graphs (Definition~\ref{def:Leavitt-path-alg}): These algebras were 
 introduced in \cite{AA} and \cite{AMP} as generalizations of the classical algebras studied by Leavitt in   
 \cite{Leav, LeavDuke}. They also provide natural purely algebraic analogues of the widely studied graph $C^*$-algebras (see e.g. \cite{Raeburn}).
 The class of Leavitt path algebras has interesting connections with various branches of mathematics, such as representation theory, ring theory, group theory,
 and dynamical systems. We refer the reader to  \cite{Abrams15} for a recent survey on this topic.
 \item \emph{Translation algebras} constructed from (locally finite) metric spaces (Definition~\ref{def:translation-algebra}): \\
 These algebras were introduced by Roe as an intermediate step between coarse metric spaces and a class of $C^*$-algebras now known as the (uniform) Roe $C^*$-algebras, as part of his far-reaching work on coarse geometry and the index theory for noncompact manifolds and metric spaces (cf., \cite{Roe03}). Their geometric nature enable them to serve as an important bridge between coarse geometry and the field of operator algebras, as well as a rich source of examples. We will further explore their connections to the theory of $C^*$-algebras in relation to amenability-type properties in \cite{ALLW-2}. 
\end{enumerate}
Typically speaking, these algebras carry zero-divisors, and the translation algebras even have uncountable dimensions.

As corollaries of Theorem~\ref{theorem:aa-characterized-intro}, we observe that \emph{properly infinite} unital algebras are always non-amenable. Recall that a unital algebra $\cA$ is said to be properly infinite if the 
unit is Murray-von~Neumann equivalent to two 
mutually orthogonal idempotents. This condition itself expresses a form of paradoxicality, one that is generally strictly stronger than the notion of
paradoxical decompositions used in Theorem~\ref{theorem:aa-characterized-intro}. This Murray-von Neumann kind of paradoxical decomposition, along with some other forms of non-amenability,
are discussed in \cite[Section 4.5]{Cec01}. 
Indeed, there are division algebras which are non-amenable, and a division algebra cannot be properly infinite (cf. \cite{Elek06}). 
However, proper infiniteness and algebraic non-amenability coincide for the two main classes of examples we study.

\begin{thmintro}[cf., Corollary~\ref{cor:equiv-notamenable-prop-inf} and Theorem~\ref{theorem:main1}]
 Let $\K$ be a field. If $\cA$ is either
 \begin{enumerate}
  \item a unital Leavitt path $\K$-algebra of a finite graph, or
  \item a translation $\K$-algebra (associated to a locally finite extended metric space),
 \end{enumerate}
 then $\cA$ is algebraically amenable if and only if it is not properly infinite. 
\end{thmintro}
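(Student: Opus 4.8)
I would prove the two implications separately: the forward direction (algebraically amenable $\Rightarrow$ not properly infinite) is uniform, and the reverse splits into the two classes. For the forward direction, note first that both algebras are unital --- a translation algebra $\cA(X)$ contains the identity matrix, which has propagation $0$, and a Leavitt path algebra $L_{\K}(E)$ is unital exactly when $E^0$ is finite, our standing hypothesis --- and then invoke the general fact, recorded right after Theorem~\ref{theorem:aa-characterized-intro}, that a properly infinite unital $\K$-algebra is never algebraically amenable. Concretely: if $1 = p_1 + p_2$ with orthogonal idempotents $p_i \sim 1$, realized by $w_i v_i = 1$ and $v_i w_i = p_i$, then each $v_i$ is injective and $p_i v_i = v_i$, so for any putative F\o lner net $\{W_j\}$ the F\o lner estimates force $\dim(v_i W_j \cap W_j)/\dim W_j \to 1$ for $i = 1,2$, hence $\dim(v_1 W_j \cap v_2 W_j \cap W_j)/\dim W_j \to 1$; but every $x \in v_1 W_j \cap v_2 W_j$ satisfies $x = p_1 x = p_2 x$, so $x = p_1 p_2 x = 0$, a contradiction. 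Contrapositively, algebraic amenability excludes proper infiniteness in both cases.

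For the reverse direction in the translation-algebra case I would route through coarse geometry rather than argue directly. Recall that one of the paper's main results identifies algebraic amenability of $\cA(X)$ with amenability of the extended metric space $X$ --- we only need the easy half, that $X$ amenable $\Rightarrow \cA(X)$ amenable, witnessed by the subspaces $P_{F_i}\cA(X)P_{F_i}$ attached to a F\o lner net $\{F_i\}$ of $X$ --- and that for locally finite extended metric spaces amenability is equivalent to non-paradoxicality (Theorem~\ref{theorem:amenable-extended-metric}, Definition~\ref{def:paradoxical-decomposition}). Hence, if $\cA(X)$ is not algebraically amenable then $X$ is not amenable, so $X$ is paradoxical; after the standard reduction this yields a partition $X = Y_1 \sqcup Y_2$ together with \emph{partial translations} (bounded-displacement bijections) $f_i : X \to Y_i$, $i = 1,2$. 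Each $f_i$ produces $v_i \in \cA(X)$, the $X \times X$ matrix with a $1$ at every entry $(f_i(x),x)$, of finite propagation precisely because $f_i$ has bounded displacement, with partial inverse $w_i$ satisfying $w_i v_i = 1$ and $v_i w_i = q_i$, the diagonal projection onto $Y_i$. Then $q_1 \sim 1 \sim q_2$, $q_1 q_2 = 0$ and $q_1 + q_2 = 1$, so $\cA(X)$ is properly infinite. With the forward direction this is Theorem~\ref{theorem:main1}.

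For the reverse direction in the Leavitt-algebra case (so $E^0$ finite), I would pass to the graph monoid $M_E = V(L_{\K}(E))$, in which $[1] = \sum_{v\in E^0}[v]$; by the general criterion for proper infiniteness of an idempotent in $V(\cdot)$, $L_{\K}(E)$ is properly infinite iff $2[1] \le [1]$ in $M_E$. A separation argument for the refinement monoid $M_E$ shows that the failure of $2[1]\le[1]$ is equivalent to the existence of a monoid homomorphism $\phi : M_E \to [0,\infty]$ with $0 < \phi([1]) < \infty$; setting $g(v) := \phi([v])$, this is exactly a nonzero $\R_{\ge 0}$-valued \emph{graph trace} on $E$, i.e.\ a function $g : E^0 \to \R_{\ge 0}$ with $g(v) = \sum_{e \in s^{-1}(v)} g(r(e))$ at every vertex emitting a finite, nonzero number of edges (of finite total mass since $E^0$ is finite). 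From such a $g$ one builds an invariant dimension-measure on $L_{\K}(E)$, relative to a linear basis assembled from the standard monomials $\alpha\beta^*$, by weighting $\alpha\beta^*$ by $g(r(\alpha))$ and then normalizing and extending to arbitrary finite-dimensional subspaces: the graph-trace identity is precisely the compatibility with the Cuntz--Krieger relation $v = \sum_e ee^*$, and it is what makes left multiplication by an edge or a ghost edge alter the $g$-weight only on a negligible boundary, i.e.\ makes the dimension-measure invariant. Theorem~\ref{theorem:aa-characterized-intro} then upgrades this to algebraic amenability of $L_{\K}(E)$, which with the forward direction gives Corollary~\ref{cor:equiv-notamenable-prop-inf}.

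The translation-algebra half is essentially bookkeeping once the coarse-geometric equivalence and dichotomy are in hand, so the weight of the proof falls on the Leavitt case, where two points need care: (i) the monoid separation step, producing a finite-valued state on the non-cancellative monoid $M_E$ out of the bare failure of $2[1]\le[1]$; and, more substantially, (ii) showing that a graph trace really does yield an invariant dimension-measure in the sense of Definition~\ref{def:dimension-measure}. Step (ii) is the main obstacle: the monomials $\alpha\beta^*$ span $L_{\K}(E)$ only modulo the Cuntz--Krieger relations, so one must first extract an honest basis, check that the $g$-weighting descends to it, and then verify the F\o lner-type invariance condition for \emph{all} algebra elements rather than just generators. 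The defining equation of a graph trace is engineered so that the boundary created by left multiplication is $g$-negligible --- exactly what that invariance condition demands --- but pinning this down rigorously is the crux.
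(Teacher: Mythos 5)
Your forward direction and the translation-algebra half are essentially sound. The intersection argument showing that a properly infinite unital algebra admits no F\o lner net is correct and is a slightly more elementary substitute for the paper's route through invariant dimension measures (Corollary~\ref{cor:Properly-infinite}); the translation-algebra reverse implication, via non-amenability of $X$, the paradoxicality dichotomy, and the operators $V_{t_\pm}$, $V_{t_\pm^{-1}}$, is exactly the paper's argument in Theorem~\ref{theorem:main1}.

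The genuine gaps are both in the Leavitt reverse direction, at precisely the two points you flag but do not resolve. First, the separation step: the Tarski-type theorem for commutative monoids produces a state $\phi$ on $(M_E,[\1])$ with $0<\phi([\1])<\infty$ under the hypothesis that $(n+1)[\1]\not\le n[\1]$ for \emph{every} $n$, not merely $2[\1]\not\le[\1]$; in a general (even refinement) monoid the latter does not imply the former, so deducing a graph trace from the bare failure of proper infiniteness requires a graph-monoid-specific argument you have not supplied. Second, and more seriously, ``weighting $\alpha\beta^*$ by $g(r(\alpha))$ and then normalizing'' does not define a dimension measure: when $L_\K(E)$ is infinite dimensional the total weight of any basis is infinite (already for a single loop, where $g$ is constant and a basis is $\{e^n\}_{n\in\Z}$), so there is nothing to normalize by. To obtain a $\mu$ satisfying (i)--(iv) of Definition~\ref{def:dimension-measure} one is forced to take an ultrafilter limit of $w(A\cap W_i)/w(W_i)$ along an exhausting net $\{W_i\}$ that is already F\o lner with respect to the weight $w$ --- that is, the construction presupposes exactly the F\o lner subspaces you are trying to produce. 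This is why the paper does not argue through dimension measures here at all: Theorem~\ref{thm:amenLPAs} proves that failure of proper infiniteness implies amenability by an explicit trichotomy on the graph (a finite-dimensional ideal when $E^0\setminus H\ne\varnothing$ consists of regular vertices; an infinite emitter outside $H$; an exclusive maximal cycle) and in each case writes down concrete F\o lner subspaces spanned by paths, e.g.\ $W=\sum_{k=N_1+1}^{N_2}\mathrm{span}\{\gamma\mu_0^k : \gamma\in\mathcal{P}\}$ along an exclusive cycle $\mu_0$. Your graph trace correctly predicts \emph{where} such subspaces must live (on the part of the graph where $g$ is nonzero), but it does not by itself produce them, so the crux of the Leavitt case remains unproved in your proposal.
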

In fact, in both cases, we pinpoint the necessary and sufficient properties of the underlying geometric data that give rise to the algebraic amenability of these algebras (cf., Theorem~\ref{thmintro:translation-alg} and Theorem~\ref{thm:amenLPAs}).

One novel aspect of our treatment is the careful distinction, in both the geometric setting and the algebraic setting, between the notion of amenability and the somewhat more restrictive notion of {proper amenability}, which, as described in Definition~\ref{def:alg-amenable-intro}, asks for a F\o lner net that is exhaustive. 
In the group case as well as the case of ordinary metric spaces, these two concepts
coincide (Corollary~\ref{cor:amenisproperamenforordinarymetricspaces}). 
However, subtle differences emerge once we engage \emph{extended} metric spaces, that is, we allow the distance between two points to be infinite. A typical way for this to happen is for an infinite space to admit a finite \emph{coarse connected component} (i.e., a finite cluster of points having finite distances among each other but infinite distances to the rest of the space), as this finite subset would immediately constitute a F\o lner net by itself, which is enough to witness amenability but not enough for proper amenability. In this sense, proper amenability ignores any F\o lner net that comes cheaply from an ``isolated finite substructure''. It turns out such a typical way is, in fact, the only way to separate the two notions in this context (Corollary~\ref{cor:characamenable-nprpamen}). In the algebraic setting, the distinction between the two concepts appears more pronounced, as they possess somewhat different permanence properties (cf., Proposition~\ref{pro:amenability-unitalization}, Example~\ref{eg:alg-amen-left-ideal} and Proposition~\ref{pro:quotient-proper}). Nevertheless, we show that the disagreement between the two notions is always caused by the existence of a finite-dimensional (one-sided) ideal \textemdash~ again a prototypical ``isolated finite substructure'' in the relevant setting. 

\begin{thmintro}[cf., Theorem~\ref{theorem:when-automatically-proper-alg-amen}]
  Let $\cA$ be an infinite dimensional $\K$-algebra over a field $\K$ that is algebraically amenable but not properly algebraically amenable. Then $\cA$ has a finite-dimensional left ideal.
\end{thmintro}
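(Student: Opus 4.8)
The plan is to prove the following slightly stronger statement, from which the theorem is immediate: \emph{every algebraically amenable $\K$-algebra $\cA$ either admits a F\o lner net of unbounded $\K$-dimension — in which case it is properly algebraically amenable — or has a nonzero finite-dimensional left ideal} (if $\cA$ is not properly amenable, the first alternative is excluded, so the second holds). Fix a F\o lner net $\{W_i\}_{i\in I}$. The first step is to show that $\limsup_i\dim_{\K}W_i=\infty$ forces $\cA$ to be properly amenable: the idea is to enlarge each $W_i$ by a prescribed finite set while the growing dimension absorbs the error. Index a new net by the pairs $(F,i)$ with $F\subseteq\cA$ finite, $i\in I$ and $(\dim_{\K}\langle F\rangle)^{2}\le\dim_{\K}W_i$ (ordered by $F\subseteq F'$ and $i\le i'$), where $\langle F\rangle$ is the $\K$-span of $F$, and set $U_{(F,i)}:=W_i+\langle F\rangle$. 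Because $\limsup_i\dim_{\K}W_i=\infty$, this index set is directed and for each $a\in\cA$ the pairs with $a\in\langle F\rangle$ form a tail, so $\{U_{(F,i)}\}$ is exhaustive; it is F\o lner because $\dim_{\K}(aU_{(F,i)}+U_{(F,i)})\le\dim_{\K}(aW_i+W_i)+2\dim_{\K}\langle F\rangle$ and $\dim_{\K}U_{(F,i)}\ge\dim_{\K}W_i$ give
\[
\frac{\dim_{\K}(aU_{(F,i)}+U_{(F,i)})}{\dim_{\K}U_{(F,i)}}\ \le\ \frac{\dim_{\K}(aW_i+W_i)}{\dim_{\K}W_i}+\frac{2}{\sqrt{\dim_{\K}W_i}}\ \longrightarrow\ 1 ,
\]
the limit being attained by choosing $F$ large (to kill the second summand) and then $i$ large (to control the first).

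From now on assume $\cA$ is not properly amenable, so $m:=\limsup_i\dim_{\K}W_i<\infty$ by the previous step (and $m\ge1$). The cofinal set $J:=\{i\in I:\dim_{\K}W_i=m\}$ is directed, and $\{W_i\}_{i\in J}$ is a F\o lner net of constant dimension $m$. For each $a\in\cA$ the integer $\dim_{\K}(aW_i+W_i)\ge m$ satisfies $\dim_{\K}(aW_i+W_i)/m\to1$, hence eventually equals $m$; so there is $i_a\in J$ with $aW_i\subseteq W_i$ for all $i\ge i_a$ in $J$.

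For $i\in J$ set $S_i:=\sum_{j\in J,\ j\ge i}W_j$; these spaces are non-increasing in $i$ and each contains the nonzero space $W_i$. If some $S_{i_0}$ is finite-dimensional, then $\dim_{\K}S_i$ is non-increasing and bounded below, hence eventually constant, so the $S_i$ stabilize to a fixed finite-dimensional space $L$ with $L\supseteq W_i\neq0$ for $i$ in the stable range; and for any $a\in\cA$, picking $i'\ge i_a$ in the stable range gives $aL=aS_{i'}=\sum_{j\ge i'}aW_j\subseteq\sum_{j\ge i'}W_j=S_{i'}=L$, so $L$ is the desired nonzero finite-dimensional left ideal. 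Otherwise every $S_i$ is infinite-dimensional, and I would derive a contradiction by building an unbounded F\o lner net: choose $j_1\le j_2\le\cdots$ in $J$, cofinal, with $\dim_{\K}(W_{j_1}+\cdots+W_{j_k})$ strictly increasing (possible precisely because no tail-span $S_i$ is finite-dimensional) and take the sliding-window sums $V_k:=\sum_{l=\lfloor\sqrt{k}\rfloor}^{k}W_{j_l}$; once the window lies past $i_a$, left multiplication by $a$ preserves each summand, so $\{V_k\}$ is F\o lner, while $\dim_{\K}V_k\to\infty$. By the first step this would force $\cA$ to be properly amenable, contrary to hypothesis, so this case cannot occur.

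I expect the main obstacle to be this last step — the structural fact that a constant-dimension F\o lner net is ``pinned'' by a finite-dimensional left ideal unless it can be spread out into one of unbounded dimension. The delicate points there are the sliding-window dimension estimate and arranging $\{j_k\}$ to be cofinal in $J$; when $J$ has uncountable cofinality one should index the last construction by finite subsets of $J$ rather than by $\N$. By contrast, the first two steps are routine dimension bookkeeping once the index sets $(F,i)$ and $J$ are chosen correctly.
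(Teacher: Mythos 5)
Your argument is correct in substance but follows a genuinely different route from the paper's. The paper never fixes a F\o lner net: it works with the full collections $\mathrm{\mbox{F\o l}}(\mathcal{F},\varepsilon)$, uses the failure of proper amenability to extract a \emph{uniform} finite bound $N_{\mathcal{F}_0,\varepsilon_0}$ on the dimensions of \emph{all} $(\mathcal{F}_0,\varepsilon_0)$-F\o lner subspaces, applies the same integrality trick you use to get $\mathrm{\mbox{F\o l}}(\mathcal{F},\varepsilon)=\mathrm{\mbox{F\o l}}(\mathcal{F},0)$ for $\mathcal{F}\supseteq\mathcal{F}_0$, and then takes for each such $\mathcal{F}$ the unique maximal-dimensional $\mathcal{F}$-invariant subspace $W_{\mathcal F}$ (unique and well defined precisely because the sum of two $(\mathcal F,0)$-F\o lner subspaces is again one and is still bounded by $N_{\mathcal{F}_0,\varepsilon_0}$); the decreasing net $\{\dim W_{\mathcal F}\}$ stabilizes and the stable subspace is the ideal. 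Because that bound is uniform over all F\o lner subspaces, the paper never encounters your Case~B. Your version instead pins a single net down to constant dimension $m$ and realizes the ideal as a stabilized tail-sum, which gives a more concrete picture of where the ideal lives but forces the extra dichotomy. Your Step~1 is essentially Proposition~\ref{pro:proper-alg-amen2}, (2)$\Rightarrow$(1), and your Step~2 and Case~A are correct.

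The one genuine weak point is Case~B. As you yourself flag, a cofinal increasing sequence $j_1\le j_2\le\cdots$ in $J$ need not exist when $J$ has uncountable cofinality, and the sliding window really does need to dominate the family $\{i_a : a\in\cA\}$, so this is not merely cosmetic. The cleanest repair is not to re-index the window but to drop the ambition of building a single unbounded net: by Proposition~\ref{pro:proper-alg-amen2}, condition (2) (equivalently, by your own Step~1) it suffices, for each finite $\mathcal{F}\subset\cA$ and each $N$, to exhibit one $(\mathcal{F},0)$-F\o lner subspace of dimension at least $N$. Take $i'\in J$ above $i_a$ for every $a\in\mathcal{F}$ (finitely many conditions, $J$ directed); since $S_{i'}$ is infinite dimensional, choose finitely many $j_1,\dots,j_r\ge i'$ in $J$ with $\dim_{\K}(W_{j_1}+\cdots+W_{j_r})\ge N$; this sum is $\mathcal{F}$-invariant. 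That gives the contradiction in Case~B with no cofinality hypothesis, and with this substitution your proof is complete.
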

It follows from this theorem that algebraic amenability and proper algebraic amenability also agree for algebras without zero-divisors\footnote{In fact, Elek's original definition in \cite{Elek03} corresponds formally to our definition of proper algebraic amenability, instead of algebraic amenability. For general algebras with possible zero-divisors, we prefer to assign the term ``algebraic amenability'' to the concept without the exhaustion requirement because of its central role in Theorem~\ref{theorem:aa-characterized-intro}.}. The distinction between the two concepts eventually plays a role in the aforementioned generalization of Elek's result in Theorem~\ref{theorem:aa-characterized-intro}, even though the statement of the theorem does not mention proper algebraic amenability. 

Although we only focus on the algebraic and the coarse geometric aspects of amenability in the present article, a major underlying motivation comes from their connections to the F\o lner property in the context of operator algebras. Such connections will be explored in \cite{ALLW-2}, where we will investigate the close relationship between algebraic amenability and the existence of F\o lner nets of projections for operator algebras on a Hilbert space. We remark that F\o lner nets of projections are relevant in single operator theory
\cite{LledoYakubovich13}, operator algebras (see, e.g., \cite{Bedos95,Bedos95,AL14,ALLY14}) as well as in applications to spectral 
approximation problems (see, e.g., \cite{Arveson94,Brown06,Lledo13} and references cited therein). 

We conclude the article with some results connecting the two main objects of study in the paper \textemdash~locally finite (extended) metric spaces and algebras over a field \textemdash~through precisely the construction of the 
{translation algebra} of a locally finite (extended) metric space. With the help of the equivalent characterizations of amenability in both contexts, we obtain the satisfactory result that (proper) amenability of the metric space
is equivalent to (proper) algebraic amenability of the corresponding translation algebra.

\begin{thmintro}[cf., Theorem~\ref{theorem:main1} and Theorem~\ref{theorem:Roe-proper-amenability}]\label{thmintro:translation-alg}
 Let $(X,d)$ be a locally finite extended metric space and let $\K_\mathrm{u}(X)$ be its translation $\K$-algebra of a field $\K$. Then $(X,d)$ is amenable
(respectively, properly amenable) if and only if $\K_\mathrm{u}(X)$ is algebraically amenable (respectively, properly algebraically amenable). 
\end{thmintro}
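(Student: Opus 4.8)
The plan is to establish the four implications separately, invoking the already-assembled characterisations of (proper) amenability. Throughout write $\cA=\K_\mathrm{u}(X)$, let $e_{x,y}$ ($x,y\in X$) denote the matrix units and, for finite $F\subseteq X$, put $P_F=\sum_{x\in F}e_{x,x}\in\cA$; also write $a\mapsto a^{\top}$ for the transpose of a matrix, which maps $\cA$ into itself, so that $\1=P_X$. We may assume $X$ is infinite, the finite case being trivial. For the first implication, let $\{F_i\}_{i\in I}$ be a F\o lner net for $(X,d)$ and set $W_i:=\{a\in\cA:\supp(a)\subseteq F_i\times F_i\}$, a subspace with $\dim_\K W_i=|F_i|^2$. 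If $a\in\cA$ has propagation $\le R$ then $aW_i$ is supported on $N_R F_i\times F_i$, so $aW_i+W_i$ is contained in the space of matrices supported on $N_R F_i\times F_i$, of dimension $|N_R F_i|\,|F_i|$; hence $1\le\dim_\K(aW_i+W_i)/\dim_\K W_i\le|N_R F_i|/|F_i|\to 1$, so $\{W_i\}_{i\in I}$ witnesses algebraic amenability of $\cA$.

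For the implication ``$X$ properly amenable $\Rightarrow\cA$ properly algebraically amenable'', let $\{F_i\}_{i\in I}$ now be an exhaustive F\o lner net; since $X$ is infinite, $|F_i|\to\infty$ along $I$. Fix a $\K$-basis $\fB$ of $\cA$ and let $\mathfrak{J}$ be the set of pairs $(i,S)$ with $i\in I$, $S\subseteq\fB$ finite and $|S|\le|F_i|$, ordered by $(i,S)\le(i',S')$ iff $i\le i'$ and $S\subseteq S'$; this is directed because $|F_i|\to\infty$. Put $W_{(i,S)}:=\{a\in\cA:\supp(a)\subseteq F_i\times F_i\}+\operatorname{span}_\K(S)$, so that $\dim_\K W_{(i,S)}\ge|F_i|^2$. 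The net is exhaustive: given $a\in\cA$, pick a finite $S_a\subseteq\fB$ with $a\in\operatorname{span}_\K(S_a)$ and $i_0\in I$ with $|F_{i_0}|\ge|S_a|$; then $a\in W_{(i,S)}$ for all $(i,S)\ge(i_0,S_a)$. And for $a$ of propagation $\le R$ one has $\dim_\K(aW_{(i,S)}+W_{(i,S)})\le|N_R F_i|\,|F_i|+2|S|\le|N_R F_i|\,|F_i|+2|F_i|$, so the F\o lner ratio is at most $|N_R F_i|/|F_i|+2/|F_i|\to 1$. The role of the constraint $|S|\le|F_i|$ is precisely to keep the ``basis part'' asymptotically negligible against the ``local part'' of dimension $|F_i|^2$.

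For the converse ``$\cA$ algebraically amenable $\Rightarrow X$ amenable'' we argue contrapositively. If $X$ is not amenable, then by Theorem~\ref{theorem:amenable-extended-metric} it admits a paradoxical decomposition: a partition $X=\bigsqcup_{k=1}^{m}A_k\sqcup\bigsqcup_{l=1}^{n}B_l$ together with bounded-displacement partial translations $g_k\colon A_k\to X$ and $h_l\colon B_l\to X$ such that $X=\bigsqcup_k g_k(A_k)=\bigsqcup_l h_l(B_l)$. Set $v_k:=\sum_{x\in A_k}e_{g_k(x),x}$ (this lies in $\cA$ because $g_k$ is injective of bounded displacement) and $v:=\sum_k v_k$; using disjointness of the $A_k$ and of the $g_k(A_k)$ a routine matrix computation gives $vv^{\top}=P_X=\1$ and $v^{\top}v=P_{\bigcup_k A_k}=:e$. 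Similarly the $h_l$ produce $w\in\cA$ with $ww^{\top}=\1$ and $w^{\top}w=P_{\bigcup_l B_l}=:f$. Thus $e$ and $f$ are idempotents with $e\sim\1\sim f$ (Murray--von~Neumann equivalence, witnessed by the pairs $(v^{\top},v)$ and $(w^{\top},w)$) and $ef=fe=0$ (as the $A_k$ and $B_l$ partition $X$), so the unital algebra $\cA$ is properly infinite. By Corollary~\ref{cor:equiv-notamenable-prop-inf} a properly infinite unital $\K$-algebra is not algebraically amenable, whence $\cA$ is not. (Together with the first step this in fact shows $\cA$ is algebraically amenable if and only if it is not properly infinite.)

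Finally, ``$\cA$ properly algebraically amenable $\Rightarrow X$ properly amenable''. An exhaustive F\o lner net being in particular a F\o lner net, the previous step already yields that $X$ is amenable; assume for contradiction that $X$ is not properly amenable. By Corollary~\ref{cor:characamenable-nprpamen}, $X$ then has a finite coarse connected component, and moreover the union $X_\infty$ of all infinite coarse connected components of $X$ is non-amenable (otherwise, having no finite coarse connected component, $X_\infty$ would itself be properly amenable, and since $X\setminus X_\infty$, a disjoint union of finite spaces, is properly amenable, so would be $X$). Coarse connected components being at infinite mutual distance, $\cA$ decomposes as a direct sum of unital algebras $\cA\cong\cD\oplus\K_\mathrm{u}(X_\infty)$ with $\cD=\K_\mathrm{u}(X\setminus X_\infty)$, and by the third step $\K_\mathrm{u}(X_\infty)$ is properly infinite, in particular not algebraically amenable. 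The permanence results on proper algebraic amenability recorded earlier (cf.\ Proposition~\ref{pro:quotient-proper} and Theorem~\ref{theorem:when-automatically-proper-alg-amen}) then force $\cA=\cD\oplus\K_\mathrm{u}(X_\infty)$ not to be properly algebraically amenable, a contradiction; hence $X$ is properly amenable. The main obstacle is exactly this last transfer. When $X\setminus X_\infty$ is finite it is immediate: the quotient map $\cA\twoheadrightarrow\K_\mathrm{u}(X_\infty)$ kills a finite-dimensional ideal, so it carries an exhaustive F\o lner net to one, making $\K_\mathrm{u}(X_\infty)$ algebraically amenable, which it is not. But when $X\setminus X_\infty$ has infinitely many (necessarily finite) coarse connected components, $\cD$ is infinite-dimensional, and one must rule out that a proper F\o lner net of $\cA$ concentrates inside $\cD$; this is where the finer structure theory around finite-dimensional one-sided ideals (Theorem~\ref{theorem:when-automatically-proper-alg-amen}) is needed, and where I expect the real work to lie.
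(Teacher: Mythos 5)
Your argument follows the paper's own route almost step for step: the corner subspaces $P_F\K_\mathrm{u}(X)P_F$ for the forward directions (the paper obtains proper algebraic amenability by choosing $|F|\ge N^{1/2}$ and invoking Proposition~\ref{pro:proper-alg-amen2}, rather than by hand-building an exhaustive net as you do, but the two are equivalent), and for the converse the chain non-amenable $\Rightarrow$ paradoxical $\Rightarrow$ properly infinite $\Rightarrow$ not algebraically amenable (the paper phrases the middle step as a unital embedding of $L(1,2)$, which is the same computation with your $v,w$). One citation should be corrected: the statement ``a properly infinite unital $\K$-algebra is not algebraically amenable'' is Corollary~\ref{cor:Properly-infinite}; Corollary~\ref{cor:equiv-notamenable-prop-inf} is a statement about Leavitt path algebras of graphs with finitely many vertices.

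The only substantive issue is your final paragraph, where you flag as unresolved the case in which $X\setminus X_\infty$ has infinitely many (finite) coarse connected components, so that $\cD$ is infinite dimensional. That case cannot occur. Corollary~\ref{cor:characamenable-nprpamen}, which you invoke, asserts precisely that an amenable but not properly amenable space decomposes as $X=Y_1\sqcup Y_2$ with $Y_1$ \emph{finite} and non-empty and $Y_2$ non-amenable; indeed, its proof shows that infinitely many finite components would already force proper amenability of $X$, since the unions $\bigsqcup_{i=1}^n X_i$ of finite components are $(R,0)$-F{\o}lner sets of unbounded cardinality. Hence $Y_1=X\setminus X_\infty$ is finite, $\cD=\K_\mathrm{u}(Y_1)$ is finite dimensional, and your ``immediate'' case is the only case: Proposition~\ref{pro:quotient-proper} transfers proper algebraic amenability to the quotient $\K_\mathrm{u}(Y_2)$, which is properly infinite and hence not algebraically amenable by Corollary~\ref{cor:Properly-infinite} --- a contradiction. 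This is exactly how the paper closes Theorem~\ref{theorem:Roe-proper-amenability}; no appeal to the finer structure theory of Theorem~\ref{theorem:when-automatically-proper-alg-amen} is needed, and your proof is complete once you use the full strength of the corollary you already cite.
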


In the case where the field $\K$ is the complex numbers $\C$, suitable completions of the translation algebras, the so-called \emph{uniform Roe $C^*$-algebras}, will be considered in \cite{ALLW-2}, where further equivalences involving the F\o lner property of these $C^*$-algebras will be established. 

\bigskip

\paragraph{\bf Contents:}
The paper is organized as follows.
In Section~\ref{sec:amenability-metric}, 
we begin by addressing the notion of amenability for locally finite extended metric spaces. 
We will recall in this context the relation to paradoxical decompositions and existence of invariant means
in Theorem~\ref{theorem:amenable-metric}. Finally, we will completely clarify the relation between amenability and 
proper amenability for extended metric spaces in Subsection~\ref{subsec:generalized-metric}.

In Section~\ref{sec:alg-amenable3}, we analyze amenability issues in the context of algebras over a field $\mathbbm K$, and give a complete analysis of the difference between algebraic amenability and proper algebraic amenability
(see Proposition~\ref{pro:amenability-unitalization} and Theorem~\ref{theorem:when-automatically-proper-alg-amen}). If the $\K$-algebra has no zero-divisor, then algebraic amenability and proper algebraic amenability coincide (see Corollary~\ref{cor:non-zero}).

Then we proceed in Section~\ref{sec:dicho-alg-amenable} to develop the relation between algebraic amenability,
paradoxical decompositions and existence of dimension measures on the lattice of subspaces for general $\mathbbm K$-algebras (i.e., not necessarily countably dimensional). This extends previous results by 
Elek in \cite{Elek03} in the context of countably dimensional algebras without zero-divisors.  
In this general setting, and due to the fact that the lattice of subspaces of an algebra is not 
distributive, the notion of additivity and invariance of dimension measures are captured by inequalities instead of equalities
(see Definition~\ref{def:dimension-measure} for details).
Finally, we give examples of how to produce algebras that are not algebraically amenable
using the dimension measure.

In the last two sections, we apply our general theory to two vast classes of examples: the Leavitt path algebras and the translation algebras. In Section~\ref{sec:Leavitt}, we prove that algebraic non-amenability and proper infiniteness coincide for the class of all unital Leavitt path algebras (see Theorem~\ref{thm:amenLPAs}). Using the construction of path algebras, we also give simple examples where left and right algebraic
amenability differ from each other. In Section~\ref{sec:alg-amenable2}, we prove the same result for the class of translation algebras associated to locally finite extended metric spaces. In fact, we also establish equivalences between the algebraic amenability of the translation algebra and the amenability of the underlying metric space (see Theorem~\ref{theorem:main1}), and the
analogous equivalence for proper amenability (see Theorem~\ref{theorem:Roe-proper-amenability}).

\bigskip

\paragraph{\textbf{Notations:}} Given sets $X_1,X_2$ we write their cardinality by $|X_i|$, $i=1,2$ and their 
disjoint union by $X_1\sqcup X_2$. We put $\mathbbm{N}_0=\{0,1,2,\ldots\}=\N\sqcup\{0\}$.

\section{Amenable metric spaces}\label{sec:amenability-metric}

In this section we will study locally finite metric spaces from a large scale geometric 
point of view. There are many interesting examples, of which the most prominent is the case of a finitely
generated discrete group endowed with the word length metric. 
More generally, one can always equip any (countable) discrete group with a right- (or left-)invariant proper metric
and obtain a metric space. The
dependence on the right-invariant proper metric is a rather mild one, if one is
only interested in the ``large-scale'' behavior of the metric space. More
precisely, different right-invariant proper metrics on the same group induce
metric spaces that are \emph{coarsely equivalent}, see, e.g., Section~1.4 in
\cite{Nowak-Yu-12}. Many important properties of groups are ``large-scale'' in nature. Examples include amenability, exactness, 
Gromov hyperbolicity, etc. In this section, we will focus on the first property in this list. 
Amenability has been well studied in coarse geometry
(see, e.g., \cite{Nowak-Yu-12} or \cite[Section~5.5]{bBrown08}), 
so we will only emphasize the aspects which are important for establishing parallelism with the algebraic amenability for 
$\mathbb{K}$-algebras that we are going to investigate in the next sections. 
For the sake of simplicity, we will focus on \emph{locally finite} metric spaces, i.e., those where any bounded set has finite cardinality.\footnote{Recall 
that a metric space is locally finite if and only if it is discrete and proper, the latter meaning that any closed ball
is compact (see, e.g., \cite[Section~5.5]{bBrown08}). We avoid this terminology because we use the term ``proper'' in a different sense in this
article.} 

We start by recalling the definition of amenability for locally finite metric spaces.
Our initial approach will make use of F\o lner sets. Let $(X,d)$ be a metric space and $A$ be a subset of $X$. 
For any $R>0$ define the following natural boundaries of $A$:
\begin{itemize}
 \item {\em $R$-boundary:} $\partial_R A:=\{x\in X: d(x,A)\leq R\ \text{and}\ d(x,X\setminus A)\leq R \}$;
 \item {\em outer $R$-boundary:} $\partial^+_R A := \{x\in X\setminus A:d(x,A)\leq R\}$;
 \item {\em inner $R$-boundary:} $\partial^-_R A := \{x\in A:d(x,X\setminus A)\leq R\}$. 
\end{itemize}
It is clear from the preceding definitions that $\partial_R A=\partial^+_R A\sqcup\partial^-_R A$. 
Next we introduce the notion of amenability of metric spaces due to Block and Weinberger 
(cf., \cite[Section~3]{Block-Weinberger-92}).

\begin{definition}\label{def:metric-amenability}
Let $(X,d)$ be a locally finite metric space. 
\begin{itemize}
 \item[(i)] Let $R>0$ and $\varepsilon\geq 0$. A finite non-empty set $F\subset X$ is called an $(R,\varepsilon)$-\emph{F\o lner set}
  if it satisfies
  \begin{equation*}
  \frac{|\partial_R F|}{|F|}\leq \varepsilon\;.
 \end{equation*}
We denote by $\mathrm{\mbox{F\o l}}(R, \varepsilon)$ the collection of $(R,\varepsilon)$-F\o lner sets.
 \item[(ii)] The metric space $(X,d)$ is called \emph{amenable} if for every $R>0$ and $\varepsilon > 0$ there exists
  $F\in \mathrm{\mbox{F\o l}}(R, \varepsilon)$.
 \item[(iii)] The metric space $(X,d)$ is called \emph{properly amenable} if for every $R>0$, $\varepsilon>0$ 
  and finite subset $A\subset X$ there exists
  a $F\in \mathrm{\mbox{F\o l}}(R, \varepsilon)$ with $A\subset F$.
\end{itemize}
\end{definition}

\begin{remark}\label{rmk:charac-metric-amenability}
 Since with regard to the relation of set containment, $\mathrm{\mbox{F\o l}}(R, \varepsilon)$ is monotonically decreasing with respect to $R$ 
 and monotonically increasing with respect to $\varepsilon$, we may also employ nets to simplify the quantifier-laden ``local'' condition used in the above definition:
 \begin{itemize}
  \item[(i)] Amenability of $(X,d)$ is equivalent to the existence of a net $\{F_i\}_{i\in I}$ of finite non-empty subsets such that 
  \[
   \lim_{i} \frac{|\partial_R F_i|}{|F_i|} = 0 \;,  
   \quad\mathrm{for~all}\quad R > 0 \;.
  \]
  \item[(ii)] Proper amenability of $(X,d)$ requires, in addition, that this net $\{F_i\}_{i\in I}$ satisfies 
  $ X = \liminf_{i} F_i $, where $ \liminf_{i} F_i := \bigcup_{j \in I} \bigcap_{i \geq j} F_i $.
 \end{itemize}
\end{remark}

\begin{example}
 For a finitely generated discrete group $\Gamma$ equipped with the word length metric
both notions are equivalent to F\o lner's condition for the group (see e.g.,
\cite[Proposition~3.1.7]{Nowak-Yu-12}).
\end{example}

\begin{remark}
 With the convention that for any $x\in X$, $d(x,\emptyset)=\infty$, it is immediate that any finite set is properly amenable. Using the notation
\[
 N_R^+ A:= \{x\in X:d(x,A)\leq R\}
 \quad\mathrm{and}\quad
 N_R^- A:= \{x\in X:d(x,X\setminus A) > R\}\;,
\]
we get the relations $\partial_R (N_{R}^+ A) \subset \partial^+_{2R} A $ and
$ \partial_R (N_{R}^-A) \subset \partial^-_{2R} A$. This shows
that for both of the concepts of amenability in Definition~\ref{def:metric-amenability}, 
the use of the $R$-boundary may be replaced by either the outer or the inner $R$-boundary.
\end{remark}

\begin{remark}
 From a coarse geometric point of view, the notion of (proper) amenability as defined above is better behaved when we restrict to metric spaces that are 
 \emph{uniformly locally finite} (some authors call them metric spaces with \emph{bounded geometry}) in the sense that for any $R>0$, 
 there is a uniform finite upper bound on the cardinalities of \emph{all} closed balls with radius $R$, i.e.,
 \begin{equation}
  \label{eq:bdd-geo}   \sup_{x\in X}|B_R(x)|<\infty \;, 
 \end{equation} 
 where $B_R(x):=\{y\in X:d(x,y)\leq R\}$ denotes the closed ball centered at $x$ with radius $R$. The reason is that, for this class of metric spaces, 
 amenability is preserved under coarse equivalence, and this gives us a natural way to generalize the definition to non-discrete metric spaces 
 (satisfying a suitable notion of bounded geometry), cf., \cite[Proposition 3.D.32 and Definiton 3.D.33]{Cornulier-Harpe-14} or 
 \cite[Corollary~2.2 and Theorem~3.1]{Block-Weinberger-92}. This also holds true for proper amenability, with essentially the same argument (perhaps more easily seen with the aid of Lemma~\ref{lem:proper-cardinality} below).  However, for the results we are going to present, we generally do not require our metric space to be uniformly locally finite. 
\end{remark}

The following lemma shows that the definition of proper amenability can be already characterized in terms of the cardinality of the F\o lner sets.

\begin{lemma}\label{lem:proper-cardinality}
 Let $(X,d)$ be an infinite locally finite metric space. 
 Then $X$ is properly amenable if and only if for every $R>0$, $\varepsilon>0$ and  $N\in\mathbb{N}$
 there exists an $F\in\mathrm{\mbox{F\o l}}(R,\varepsilon) $ such that 
 $|F| \geq N$.
\end{lemma}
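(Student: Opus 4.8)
The plan is to prove both implications of the stated equivalence, noting that the forward direction is essentially trivial and the content lies in the converse.

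For the forward direction: suppose $X$ is properly amenable. Given $R>0$, $\varepsilon>0$ and $N\in\mathbb N$, since $X$ is infinite and locally finite we may choose a finite subset $A\subset X$ with $|A|\geq N$ (e.g.\ a large ball, or any enumeration of $N$ distinct points). By proper amenability there is an $F\in\mathrm{\mbox{F\o l}}(R,\varepsilon)$ with $A\subset F$, hence $|F|\geq|A|\geq N$, as required.

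For the converse, suppose the cardinality condition holds. Fix $R>0$, $\varepsilon>0$ and a finite subset $A\subset X$; we must produce an $F\in\mathrm{\mbox{F\o l}}(R,\varepsilon)$ with $A\subset F$. The idea is that a large F\o lner set, if it is missing some points of $A$, can be enlarged to absorb all of $A$ at a controlled cost to the boundary-to-volume ratio; and by choosing the initial F\o lner set large enough (and with small enough ratio), this cost is negligible. Concretely, apply the hypothesis with parameters $R$, $\varepsilon' := \varepsilon/2$ (say), and $N := \lceil 2|A|/\varepsilon \rceil$, to obtain $F_0\in\mathrm{\mbox{F\o l}}(R,\varepsilon/2)$ with $|F_0|\geq N$. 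Now set $F := F_0\cup A$. Since $A$ is finite, $|F|\geq |F_0|\geq N$ while $|F\setminus F_0| = |A\setminus F_0| \leq |A|$, so $|F_0|\geq |F|-|A|$. For the boundary, observe that $\partial_R F \subset \partial_R F_0 \cup N_R^+(A)$ — more precisely, any point $x\in\partial_R F$ satisfies $d(x,F)\leq R$, and if additionally $d(x,F_0)\leq R$ and $d(x,X\setminus F_0)\leq R$ then $x\in\partial_R F_0$; otherwise $x$ is ``close to'' the finite set $A$ (or to a point that $F$ added), so $x$ lies in a finite set whose cardinality is bounded in terms of $|A|$ and $R$. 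Using local finiteness, $|\partial_R F \setminus \partial_R F_0|$ is bounded by a finite quantity $C = C(A,R)$ depending only on $A$ and $R$ (for instance $C \leq |N_R^+(A\cup \partial_R^- A)|$, a finite number since $X$ is locally finite and $A$ is finite).

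Then
\[
  \frac{|\partial_R F|}{|F|} \;\leq\; \frac{|\partial_R F_0| + C}{|F_0|}
  \;\leq\; \frac{\varepsilon}{2} + \frac{C}{N}.
\]
The main obstacle — and the only real subtlety — is that $C$ depends on $A$ and $R$, not just on $\varepsilon$, so a single application of the hypothesis with a fixed $N$ does not immediately suffice. The fix is simply to feed the hypothesis a threshold $N$ large enough to beat $C$: since $C = C(A,R)$ is a fixed finite number once $A$ and $R$ are fixed, choose $N \geq 2C/\varepsilon$ (in addition to any earlier requirement on $N$), so that $C/N \leq \varepsilon/2$ and hence $|\partial_R F|/|F| \leq \varepsilon$. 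Thus $F\in\mathrm{\mbox{F\o l}}(R,\varepsilon)$ and $A\subset F$, proving proper amenability. (One should double-check the set-theoretic inclusion bounding $\partial_R F\setminus\partial_R F_0$; this is a routine case analysis on whether a boundary point of $F$ sees $X\setminus F_0$ within distance $R$, and the ``extra'' points all lie within distance $R$ of the finitely many points in $A$ together with the finitely many points of $F_0$ lying within distance $R$ of $A$.)
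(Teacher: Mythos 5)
Your proof is correct and follows essentially the same route as the paper's: take an $(R,\varepsilon/2)$-F\o lner set $F_0$ of large enough cardinality, pass to $F_0\cup A$, and absorb the extra boundary contribution (a finite quantity depending only on $A$ and $R$) by the size of $F_0$. The paper simply uses the slightly sharper inclusion $\partial_R(F_0\cup A)\subseteq \partial_R F_0\cup\partial_R A$, so its constant is $|\partial_R A|$ rather than your cruder (but equally finite) $C(A,R)$; the mechanism is identical.
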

\begin{proof}
The ``{\em only if}'' part is clear: for any $N\in\N$ just take a finite $A\subset X$
with $|A|=N$. To show the reverse implication let $R>0$, $\varepsilon>0$ and a finite
$A\subset X$ be given. By assumption there is a finite $F\subset X$ such 
that 
\[
 |F|\geq \frac{2 | \partial_R A|}{\varepsilon}
 \quad\mathrm{and}\quad
 \frac{|\partial_R F|}{|F|}\leq \frac{\varepsilon}{2}\;.
\]
Putting $\widetilde{F}:=F\cup A$ (which contains $A$) we have
\[
 \frac{|\partial_R \widetilde{F}|}{|\widetilde{F}|}
     \leq\frac{|\partial_R F|}{|F|} + \frac{|\partial_R A|}{|F|} 
     \leq \frac{\varepsilon}{2}+\frac{\varepsilon}{2}= \varepsilon 
\]
and the proof is concluded.
\end{proof}

As in the group case, the notion of amenability for metric spaces comes with an important dichotomy in relation to
paradoxical decompositions. To formulate it, we first need to
introduce an important tool in the study of coarse geometry.

\begin{definition}\label{def:part-transl}
Let $(X,d)$ be a locally finite metric space. A {\em partial translation on} $X$ is a triple $(A,B,t)$ consisting of two subsets $A$ and $B$ of $X$ 
together with a bijection $t\colon A\rightarrow B$ such that the graph of $t$ given by
\begin{align*}
\text{graph}(t):=\{(x,t(x))\in X\times X:x\in A\}
\end{align*}
is controlled, i.e., $\sup_{x\in A}d(x,t(x))<\infty$. We denote the corresponding
domain and range of $t$ by $\mathrm{dom}(t):=A$ and $\mathrm{ran}(t):=B$. 

The set of all partial translations of $X$ is denoted as $\mathrm{PT}(X)$.
\end{definition}

Note that $\mathrm{PT}(X)$ forms a subsemigroup of the inverse semigroup of partially defined bijective maps $X$ (see, e.g., \cite{Exel98}). More explicitly, the composition of any 
two partial translations $t, t' \in \mathrm{PT}(X)$, denoted by $t \circ t'$, is defined to be the partial translation satisfying 
\[
 \mathrm{dom}( t \circ t') = \left\{ x \in \mathrm{dom}(t') \;|\; t'(x) \in \mathrm{dom}(t) \right\}
\]
and $( t \circ t') (x) = t(t'(x))$ for any $x \in \mathrm{dom}( t \circ t')$. Note that the graph of $t \circ t'$ is also 
controlled since
\[
 \sup_{x\in \mathrm{dom}( t \circ t')}d\left(x, ( t \circ t')(x)\right) \leq \sup_{x\in \mathrm{dom}(t')}d(x,t'(x)) + \sup_{x\in \mathrm{dom}(t)}d(x,t(x)) < \infty \; .
\]

\begin{definition}\label{def:invariant-mean}
A mean $\mu$ on a locally finite metric space $(X,d)$ is a normalized, finitely additive map on the set of all subsets of $X$, $\mu\colon \mathcal{P}(X)\to [0,1]$.
The measure $\mu$ is called {\em invariant under partial translations} if $\mu(A)=\mu(B)$ for all partial translations $(A,B,t)$.
\end{definition}

\begin{definition}\label{def:paradoxical-decomposition}
Let $(X,d)$ be a locally finite metric space. A {\em paradoxical decomposition of} $X$ is a (disjoint) partition 
$X= X_+ \sqcup X_-$ such that there exist two partial translations $t_i:X\rightarrow X_i$ for $i \in \{+, - \}$.
\end{definition}

\begin{remark}\label{rem:Schroeder-Berstein}
 Applying a Bernstein-Schr\"{o}der-type argument, one may slightly weaken the condition of having a paradoxical decomposition: it suffices to assume that 
 there are two disjoint (non-empty) subsets $X'_+, X'_- \subset X$ such that there exist partial translations $t'_i \colon X \to X'_i$ for $i \in \{+,-\}$. 
 Here we do not require their union to be $X$, in contrast with Definition~\ref{def:paradoxical-decomposition}. Indeed, assume we can find $(X'_+, t'_+, X'_-, t'_-)$ as above. 
 We may then write $X = X'_+ \sqcup X'_- \sqcup \widetilde{X}$. Now we define $\displaystyle \widehat{X} = \bigcup_{k=0}^\infty (t'_+)^k (\widetilde{X})$, where 
 $(t'_+)^0$ is viewed as the identity map. This is a disjoint union because $\widetilde{X}$ is disjoint from the image of $t'_+$. 
 Note also that $t'_+$ maps $\widehat{X}$ and $X \setminus \widehat{X}$ into themselves, respectively, and $\widehat{X} = \widetilde{X} \sqcup t'_+( \widehat{X} ) $. 
 By the injectivity of $t'_+$, we have $t'_+(X \setminus \widehat{X}) = X'_+ \setminus t'_+ (\widehat{X} ) = X'_+ \setminus \widehat{X}$. 
 This allows us to construct a paradoxical decomposition $(X_+, t_+, X_2, t_2)$ in the sense of Definition~\ref{def:paradoxical-decomposition} by setting $X_+ = X'_+ \sqcup \widetilde{X}$ (which is equal to $(X'_+ \setminus \widehat{X}) \sqcup \widehat{X}$), 
 $X_2 = X'_-$, $t_+ = \left( t'_+ | _{X \setminus \widehat{X}} \right) \sqcup \mathrm{Id}_{\widehat{X}}$ and $t_2 = t'_-$. 
\end{remark}

The following result gives some standard characterizations of amenable metric spaces that will be 
used later (see, e.g., \cite[Theorems~25 and 32]{Silberstein-Grigorchuk-Harpe-99}; we give an alternative proof
of the implication (\ref{item:X-no-parox})$\Rightarrow$(\ref{item:X-amen})  in the more 
general context of extended metric spaces; see in Theorem~\ref{theorem:amenable-extended-metric}).

\begin{theorem}\label{theorem:amenable-metric}
Let $(X,d)$ be a locally finite metric space. Then the following conditions are equivalent:
\begin{enumerate}
 \item \label{item:X-amen} $(X,d)$ is amenable.
 \item \label{item:X-no-parox} $X$ admits no paradoxical decomposition.
 \item \label{item:inv-meas} There exists a mean $\mu$ on $X$ which is invariant under partial translations.
\end{enumerate}
\end{theorem}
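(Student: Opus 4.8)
The plan is to prove the cyclic chain of implications $(\ref{item:X-amen}) \Rightarrow (\ref{item:inv-meas}) \Rightarrow (\ref{item:X-no-parox}) \Rightarrow (\ref{item:X-amen})$, of which only the last is deferred (to Theorem~\ref{theorem:amenable-extended-metric}). First I would establish $(\ref{item:X-amen}) \Rightarrow (\ref{item:inv-meas})$. Let $\{F_i\}_{i\in I}$ be a F\o lner net as in Remark~\ref{rmk:charac-metric-amenability}(i). For each subset $A \subseteq X$ set $\mu_i(A) := |A \cap F_i|/|F_i| \in [0,1]$; each $\mu_i$ is a finitely additive normalized measure on $\mathcal{P}(X)$. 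Fix a point $\omega$ in the Stone--\v{C}ech compactification of $I$ refining the order filter (equivalently, take a weak-$*$ limit point of the net $\{\mu_i\}$ in the compact space $[0,1]^{\mathcal{P}(X)}$) and define $\mu(A) := \lim_{i \to \omega} \mu_i(A)$. Normalization and finite additivity pass to the limit. It remains to check invariance under a partial translation $(A, B, t)$ with $\sup_{x\in A} d(x, t(x)) \le R$. The key estimate is that $F_i \cap A$ and its image $t(F_i \cap A) \subseteq B$ differ, up to the set $\partial^+_R F_i \cup \partial^-_R F_i \subseteq \partial_R F_i$, only near the boundary: more precisely $t(F_i \cap A) \triangle (F_i \cap B)$ has cardinality bounded by $|\partial_R F_i|$ (points of $F_i\cap A$ whose image leaves $F_i$, and points of $F_i \cap B$ with preimage outside $F_i$, all lie within distance $R$ of $X\setminus F_i$ or of $F_i$ respectively). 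Dividing by $|F_i|$ and passing to the limit along $\omega$ gives $\mu(A) = \mu(B)$.

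Next, $(\ref{item:inv-meas}) \Rightarrow (\ref{item:X-no-parox})$ is a short contradiction argument. Suppose $\mu$ is an invariant mean and $X = X_+ \sqcup X_-$ is a paradoxical decomposition with partial translations $t_\pm \colon X \to X_\pm$. Invariance gives $\mu(X_+) = \mu(X) = 1$ and likewise $\mu(X_-) = 1$, while finite additivity forces $\mu(X_+) + \mu(X_-) = \mu(X) = 1$, a contradiction. (One should note $X$ must be infinite for a paradoxical decomposition to exist, since $t_\pm$ are injections of $X$ into proper subsets; this is implicitly fine.)

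Finally, for $(\ref{item:X-no-parox}) \Rightarrow (\ref{item:X-amen})$ I would simply invoke Theorem~\ref{theorem:amenable-extended-metric}, which is stated to prove this implication in the wider generality of extended metric spaces (an ordinary metric space being a special case). The main obstacle in the self-contained part of the argument is the boundary estimate in the first implication: one must carefully verify that the symmetric difference between $t(F_i \cap \mathrm{dom}(t))$ and $F_i \cap \mathrm{ran}(t)$ is genuinely contained in a set of size $O(|\partial_R F_i|)$ using only that the graph of $t$ is $R$-controlled — this is where the definitions of the outer and inner $R$-boundaries do the work, and it is worth writing out which points land in which boundary piece. Everything else (existence of the limit $\mu$, finite additivity, the counting contradiction) is routine.
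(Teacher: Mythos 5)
Your proposal is correct and takes essentially the same route as the paper: the paper also establishes the cycle (\ref{item:X-amen})$\Rightarrow$(\ref{item:inv-meas})$\Rightarrow$(\ref{item:X-no-parox})$\Rightarrow$(\ref{item:X-amen}), citing the standard arguments for the first two implications (which you have correctly written out, including the symmetric-difference estimate $|t(F_i\cap A)\,\triangle\,(F_i\cap B)|\leq|\partial_R F_i|$ needed for invariance) and deferring (\ref{item:X-no-parox})$\Rightarrow$(\ref{item:X-amen}) to Theorem~\ref{theorem:amenable-extended-metric}, exactly as you do.
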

\begin{remark}
Deuber, Simonovits and S{\'o}s in \cite{DSS} considered the exponential growth rate\footnote{It is also called \emph{doubling condition} in the survey of Elek 
and S{\'o}s \cite{ES} and in \cite{Silberstein-Grigorchuk-Harpe-99}.} on locally finite metric spaces and they showed that this growth condition characterizes paradoxicality completely. 
It can be regarded as a Tarski-alternative-type theorem for locally finite metric spaces and it also served as an inspiration for the proof of the Tarski alternative (see \cite[Theorem~32]{Silberstein-Grigorchuk-Harpe-99}).
\end{remark}

It is interesting to note that the notions of paradoxicality and invariant means have been recently introduced and studied for arbitrary Boolean inverse monoids
in \cite{KLLR16}.

\subsection{Amenability vs.~ proper amenability for extended metric spaces}\label{subsec:generalized-metric}

In many ways, the amenability for metric spaces generalizes the
corresponding notion for groups, with certain properties paralleling those of the
latter. However, caution should be taken when one tries to understand amenability for metric spaces
from its similarity with groups. For example, amenability for metric spaces does not
pass to subsets in general. As an example consider the free group $\mathbb{F}_n$, $n\geq 2$,
with a ray attached to it. In this 
sense there is also a parallelism with the notion of F\o lner sequence in the context of
operator algebras as considered in \cite[Section~4]{AL14}.

In this subsection we complete the analysis of amenability in relation to proper amenability in the metric space context.
We shall see that going beyond ordinary metric space (meaning the distance of any two points is finite) helps us better understand some aspects of
amenability. For this we consider {\em extended} metric spaces $(X,d)$ as coarse spaces, i.e., spaces where the metric is allowed to take the 
value $\infty$,
\[ 
d\colon X\times X \to [0, \infty] \;.
\]
For now let us stay assured that the additional complexity brought about by such a generalization is rather mild. Indeed, observe that
the property that two points have finite distance defines an equivalence relation, which decomposes $X$ uniquely into a disjoint union of
equivalence classes $X= \bigsqcup_{i \in I} X_i$, such that each $(X_i, d|_{X_i \times X_i})$ is an ordinary metric space, while 
$d(X_i, X_j) = \infty$ for any different $i,j \in I$. Each $X_i$ is called a \emph{coarse connected component} of $X$. 
Note that if $(X,d)$ is a locally finite {\em extended} metric space, then each component $X_i$ 
is countable although the total space $X$ need not be countable in general. 
As in the usual metric space situation we also have here that if $X$ is finite, then it is properly amenable by taking $F=X$.
As we will show later (Corollary~\ref{cor:amenisproperamenforordinarymetricspaces} and Corollary~\ref{cor:characamenable-nprpamen}), it turns out 
that the notions of amenability and proper amenability are equivalent if the 
extended metric space contains only one coarse connected component (i.e., in the metric space case), but not in general. 

\begin{remark}\label{rem:notions-for-extended-metric}
 Definition~\ref{def:metric-amenability}, Definition~\ref{def:part-transl}, Definition~\ref{def:invariant-mean} and Definition~\ref{def:paradoxical-decomposition} generalize directly to extended metric spaces. So does the Bernstein-Schr\"{o}der-type argument in Remark~\ref{rem:Schroeder-Berstein}.
\end{remark}

\begin{remark}\label{coarse-boundaries}
We will justify here that the characterization of proper amenability in terms of the cardinality
of the F\o lner sets given in Lemma~\ref{lem:proper-cardinality} is still true in the extended 
metric space context. 
Note first that if $F\subset X=\bigsqcup_{i \in I} X_i$ is a finite set (and denoting by $F_i$
the corresponding subset in each coarse connected component $X_i$) we have that
$d(x,F)=\min\{d(x,F_i) : i\in I\}$. Therefore, the $R$-boundary of $F$ decomposes as 
$R$-boundaries in each coarse connected components:
\[
 \partial_R(F)=\mathop{\bigsqcup}_{i \in I} \partial_R (F_i)\;.
\]
(Note also that if $F_i=\emptyset$, then $\partial_R (F_i)=\emptyset$).
Therefore we can reason in each coarse connected component as in
the proof of Lemma~\ref{lem:proper-cardinality}.
\end{remark}

\begin{proposition}
\label{prop:components-and-amenability}
Let $(X,d)$ be a locally finite extended metric space. 
Then $X$ is amenable if at least one of its coarse connected
components is amenable. The converse is true in the case where there are only a finite number of coarse connected components.  
\end{proposition}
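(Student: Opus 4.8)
The plan is to prove the two implications separately, using the characterization of amenability via F\o lner sets from Definition~\ref{def:metric-amenability} together with the decomposition of $R$-boundaries across coarse connected components recorded in Remark~\ref{coarse-boundaries}.

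For the first (easier) direction, suppose some coarse connected component $X_{i_0}$ is amenable. I would note that $X_{i_0}$, with its restricted metric, is itself a locally finite (extended, but in fact ordinary) metric space, and that for any finite $F \subset X_{i_0}$ the $R$-boundary computed in $X$ coincides with the $R$-boundary computed in $X_{i_0}$; this is because a point $x$ in a component $X_j$ with $j \neq i_0$ has $d(x,F) = \infty > R$, so it contributes nothing to $\partial_R F$, and similarly for the inner boundary. Hence any $(R,\varepsilon)$-F\o lner set for $X_{i_0}$ is an $(R,\varepsilon)$-F\o lner set for $X$, and amenability of $X_{i_0}$ immediately gives amenability of $X$.

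For the converse, assume there are only finitely many coarse connected components $X_1, \dots, X_n$ and that $X$ is amenable. Fix $R > 0$ and $\varepsilon > 0$; I want to produce an $(R,\varepsilon)$-F\o lner set contained in a single component. By amenability of $X$ there is a finite non-empty $F \subset X$ with $|\partial_R F|/|F| \leq \varepsilon$. Writing $F = \bigsqcup_{i=1}^n F_i$ with $F_i = F \cap X_i$, Remark~\ref{coarse-boundaries} gives $\partial_R F = \bigsqcup_{i=1}^n \partial_R F_i$, hence $\sum_i |\partial_R F_i| = |\partial_R F| \leq \varepsilon |F| = \varepsilon \sum_i |F_i|$. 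A simple averaging (pigeonhole) argument then yields an index $i$ with $F_i \neq \emptyset$ and $|\partial_R F_i| \leq \varepsilon |F_i|$: if no such $i$ existed, then $|\partial_R F_i| > \varepsilon |F_i|$ for every $i$ with $F_i \neq \emptyset$ (and both sides vanish when $F_i = \emptyset$), so summing would give $|\partial_R F| > \varepsilon |F|$, a contradiction. This $F_i$ is an $(R,\varepsilon)$-F\o lner set for $X_i$. Since $R$ and $\varepsilon$ were arbitrary, this shows that at least one component is ``amenable along a cofinal family of parameters'' — but here a small care is needed: the component witnessing $(R,\varepsilon)$ may vary with $R$ and $\varepsilon$. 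Because there are only finitely many components, by another pigeonhole argument (applied to the net of pairs $(R,\varepsilon)$ directed by $R \nearrow \infty$, $\varepsilon \searrow 0$, and the map sending each pair to a witnessing component), one component $X_{i_0}$ witnesses a cofinal set of parameters, and by the monotonicity of $\mathrm{\mbox{F\o l}}(R,\varepsilon)$ in $R$ and $\varepsilon$ noted in Remark~\ref{rmk:charac-metric-amenability} this suffices to conclude that $X_{i_0}$ is amenable.

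The main obstacle is this last point: extracting a \emph{single} component that works for all parameters rather than merely one component per parameter. The finiteness of the index set $I$ is exactly what makes this possible, and it is also precisely where the hypothesis ``finitely many coarse connected components'' is used — with infinitely many components the converse genuinely fails, since one can have each component non-amenable yet small pieces from different components assembling into F\o lner sets (or, more simply, infinitely many components of growing but finite size). I would make the cofinality argument precise by noting that the directed set of parameters can be exhausted by a sequence $(R_k, \varepsilon_k)$ with $R_k \to \infty$ and $\varepsilon_k \to 0$; the function assigning to each $k$ a witnessing component takes values in the finite set $\{1,\dots,n\}$, hence is constant, say equal to $i_0$, on an infinite subsequence; and then $X_{i_0}$ has, for each $k$ in that subsequence, an $(R_k,\varepsilon_k)$-F\o lner set, which by monotonicity gives $(R,\varepsilon)$-F\o lner sets for all $R>0$, $\varepsilon>0$, i.e.\ amenability of $X_{i_0}$.
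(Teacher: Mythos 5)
Your proof is correct, but it follows a genuinely different route from the paper's. The paper proves the converse by contraposition via the dichotomy of Theorem~\ref{theorem:amenable-metric}: if every component $X_i$ is non-amenable, each admits a paradoxical decomposition, and because there are only finitely many components these can be assembled into a paradoxical decomposition of $X$ (finiteness is what keeps the assembled partial translations controlled, since one takes the maximum of finitely many propagation bounds), whence $X$ is non-amenable. You instead argue directly and positively within the F\o lner definition: the boundary decomposition of Remark~\ref{coarse-boundaries} plus an averaging argument extracts, from each $(R,\varepsilon)$-F\o lner set of $X$, a component-supported $(R,\varepsilon)$-F\o lner set, and a pigeonhole over the finite index set along a cofinal sequence of parameters pins down a single amenable component; your explicit attention to the fact that the witnessing component may vary with the parameters is exactly the point that needs care, and your use of the monotonicity of $\mathrm{\mbox{F\o l}}(R,\varepsilon)$ closes it correctly. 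Your argument is more elementary in that it does not invoke the (nontrivial) equivalence with paradoxical decompositions, and it yields slightly more, namely a single component carrying F\o lner sets for all parameters; the paper's argument is shorter given that the dichotomy is already established, and it localizes the use of finiteness differently (uniform control of the glued translations rather than pigeonhole on components). Both correctly break down for infinitely many components, as Example~\ref{exam:infinite-components-amen} shows.
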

\begin{proof}
The first statement is trivial. For the second, assume that $X = \bigsqcup _{i=1}^N X_i$ is a union of finitely many coarse connected components $X_i$,
and that all the coarse connected components are non-amenable. We have to show that $X$ is non-amenable. 
Since all coarse connected components $X_i$ are non-amenable, it follows from 
Theorem~\ref{theorem:amenable-metric} that each component $X_i$ has a paradoxical decomposition. Since there is only 
a finite number of components,
these paradoxical decompositions  can
be assembled to a paradoxical decomposition of $X$, hence $X$ is non-amenable, as desired.
\end{proof}

The second part of Proposition~\ref{prop:components-and-amenability} cannot be generalized to extended metric spaces with an infinite number of 
coarse connected components, as the following example shows.

\begin{example}
 \label{exam:infinite-components-amen}
We construct a locally finite extended metric space $(X,d)$, with an infinite number of coarse connected components, such that neither 
of the connected components of $X$ is amenable, but $X$ is properly amenable. Let $Y$ be the Cayley graph of the 
free non-Abelian group $\mathbb F _2$ of rank two.
For each $n\in \N$, let $Y_n$ be the graph obtained by attaching $n$ new vertices $v_1,\ldots , v_n$ and 
$n$ new edges $e_1,\ldots , e_n$ to $Y$, in such a way that $e_i$ connects $v_i$ with $v_{i+1}$ 
for $i=1,\ldots ,n-1$, and $e_n$ connects $v_n$ with $e$, being $e$ the neutral element of $\mathbb F_2$ (seen as a vertex of $Y$).
Note that $Y_n$ is the graph obtained by attaching a trunk of length $n$ to $Y$. Let $X_n$ be the metric space 
associated to the connected graph $Y_n$, and observe that all the metric spaces $X_n$ are non-amenable. 
Let $X$ be the extended metric space having the metric spaces $X_n$ as coarse connected components. Then clearly $X$ is properly amenable, because
we can use the long trunks to localize the F\o lner sets of $X$ of arbitrary large cardinality.
\end{example}

We also remark that Theorem~\ref{theorem:amenable-metric} given in \cite{Silberstein-Grigorchuk-Harpe-99} stays true in the case of extended metric space. 

\begin{theorem}\label{theorem:amenable-extended-metric}
 Let $(X,d)$ be a locally finite extended metric space. Then the following conditions are equivalent:
\begin{enumerate}
 \item \label{item:X-ext-amen} $(X,d)$ is amenable.
 \item \label{item:X-ext-no-paradox} $X$ admits no paradoxical decomposition.
 \item \label{item:ext-inv-meas} There exists a mean $\mu$ on $X$ which is invariant under partial translations.
\end{enumerate} 
\end{theorem}
\begin{proof}
 The proofs of the implications (\ref{item:X-ext-amen})$\Rightarrow$(\ref{item:ext-inv-meas}) and 
 (\ref{item:ext-inv-meas})$\Rightarrow$(\ref{item:X-ext-no-paradox}) are standard and apply equally well to the extended metric space situation (see, e.g., \cite[\S 26 and part III]{Silberstein-Grigorchuk-Harpe-99}).
 
 The implication (\ref{item:X-ext-no-paradox})$\Rightarrow$(\ref{item:X-ext-amen}) is more interesting. Hereby we present a direct proof for the sake of completeness, adapting ideas from Kerr and Li in \cite[Theorem 3.4, (vi) $\Rightarrow$ (v)]{KL15} to the setting of extended metric spaces (see also \cite{KL15}). 
 This proof should also serve as a motivation for the proof of Proposition~\ref{prop:non-amenable-implies-paradoxic} in the context of algebraic amenability. 

 We suppose that $(X,d)$ is not amenable and would like to show that $X$ has a paradoxical decomposition. By Remark~\ref{rem:Schroeder-Berstein}, it suffices to show that there are two disjoint subsets $X'_+, X'_- \subset X$ such that there exist partial translations $t'_i \colon X \to X'_i$ for $i \in \{+,-\}$. By the negation of Definition~\ref{def:metric-amenability}, there is $\varepsilon_0 \in (0, 1)$ and $R_0>0$
such that, for any finite non-empty set $F\subset X$, one has the following estimate for the outer $R$-boundary: $|\partial^+_{R_0} F|>\varepsilon_0 |F|$
and, hence, $|N^+_{R_0} F|>(1+\varepsilon_0) |F|$. Since, for any finite set $F\subset X$, we also have
\[
 N^+_{2R_0}(F)\ge  N^+_{R_0}\left( N^+_{R_0} F  \right)\geq (1+\varepsilon_0)| N^+_{R_0} F  | \geq (1+\varepsilon_0)^2 | F  | \; ,
\]
we can choose a radius $R_d:=nR_0$ for some $n\geq \log_{1+\varepsilon_0}(2) +1$ satisfying the following local doubling condition:
for any finite non-empty set $F\subset X$, we have 
\[  
|N^+_{R_d} F  | > 2 \;|F|\;.
\]

In the next step of the proof we will essentially use Zorn's lemma to produce a paradoxical decomposition (a ``global doubling'') of $X$. Consider the set
$\Omega$ of set-valued maps $\omega\colon X\times \{ {\scriptstyle +,-}\}\to \cP(X)$ (the power set of $X$) such that for any 
$y=(x,j)\in X\times\{{\scriptstyle +,-}\}$ we have $\omega(y)\in\cP\left( B_{R_d}(x)\right)$ and for any finite set 
$K\subset X\times\{{\scriptstyle +,-} \}$ we have 
\[
 \left| {\bigcup}_{y\in K} \omega(y) \right| \geq |K|\;.
\]
Note that the set $\Omega$ is not empty since the set-valued map given by $\omega(y):=B_{R_d}(x)$ for any $y=(x,j)\in X\times\{{\scriptstyle +,-} \}$
is an element of $\Omega$. In fact, we only need to verify the preceding inequality: 
for any finite set $K \subset X\times\{{\scriptstyle +,-} \}$, we write $K = K_+\times\{{\scriptstyle +}\}\sqcup K_-\times\{{\scriptstyle -}\} $ and calculate that
\[
 \left| {\bigcup}_{y\in K} \omega(y) \right| = \left| N_{R_d}^+(K_+\cup K_-)  \right|\geq 2|K_+\cup K_-|\geq |K_+|+|K_-|=|K|\;.
\]
The set $\Omega$ may also be partially ordered in the following natural way
\[
 \omega\leq \omega' \quad\mathrm{if}\quad \omega(y)\subset \omega'(y)\quad\mathrm{for~any~} y\in X\times\{{\scriptstyle +,-}\}\;.
\]
Since any descending chain has a non-empty lower bound given by pointwise intersection we obtain by Zorn's lemma a minimal element 
$\omega_m\in\Omega$. Note that, by the definition of $\Omega$, we already have $| \omega_m(y) |\geq 1$ for any $y\in X\times\{{\scriptstyle +,-}\}$.

We claim that $|\omega_m(y)|= 1$ for any $y\in X\times\{{\scriptstyle +,-}\}$. Suppose this is not the case. Then there is $y_0\in X\times\{{\scriptstyle +,-}\}$ such that $\omega_m(y_0)$ has two distinct elements $x_+,x_-$. By the minimality of $\omega_m$, there exist, for 
$l\in\{{\scriptstyle +,-}\}$, finite sets $K_l\subset  X\times\{{\scriptstyle +,-}\}$ not containing $y_0$ and such that
\[
 \left| \left(\omega_m(y_0)\setminus\{x_l\} \right) \cup \left({\bigcup}_{y\in K_l} \omega_m (y)\right) \right|  \leq  |K_l| \;.
\]
(Note that, otherwise, one could remove $x_l$ from $\omega_m(y_0)$ to specify a new element in $\Omega$ strictly smaller than 
$\omega_m$.) Define, for $l\in\{{\scriptstyle +,-}\}$, the set
\[
 Z_l:=  \left(\omega_m(y_0)\setminus\{x_l\} \right) \cup \left({\bigcup}_{y\in K_l} \omega_m (y)\right)\;.
\]
Using the identity $(\omega_m(y_0)\setminus\{x_+\})\cup(\omega_m(y_0)\setminus\{x_-\})=\omega_m(y_0)$ as well as the preceding inequality, we obtain the following contradiction:
\begin{eqnarray*}
  |K_+|+|K_-| &\geq& |Z_+|+|Z_-| \;=\;|Z_+\cup Z_-|+|Z_+\cap Z_-| \\[2mm]
      &\geq& \left|\omega_m(y_0)\cup \left(\mathop{\bigcup}_{y\in (K_+\cup K_-)} \omega_m(y)\right) \right| 
                    + \left|\mathop{\bigcup}_{ y\in (K_+\cap K_-)} \omega _m(y) \right| \\[2mm]
      &\geq& 1+|K_+\cup K_-|+|K_+\cap K_-|= 1+|K_+|+|K_-|\;.
 \end{eqnarray*}
Therefore $|\omega_m(y)|= 1$ for any $y\in X\times\{{\scriptstyle +,-}\}$. 

To finish the proof, we define, for any $l\in\{{\scriptstyle +,-}\}$, the map $t_l\colon X\to X_l$ which assigns to each $x\in X$ the unique element in $\omega_m(x,l)$. Note that it follows now from the definition of $\Omega$ that $\omega_m(y) \cap \omega_m(y') = \varnothing$ if $y\not= y'$. Consequently, both $t_+$ and $t_-$ are injective and they have disjoint images, which we denote by $X_+$ and $X_-$, respectively. Since by definition $\omega_m(x,l)\subset B_{R_d}(x)$ we have
\[
 \sup \{d(x,t_l(x)) : x\in X\}\leq R_d\;,
\]
hence the maps $t_+$, $t_-$ are controlled and the quadruple $(X_+, t_+, X_-, t_-)$ satisfies the condition in Remark~\ref{rem:Schroeder-Berstein}
and, hence, a paradoxical decomposition can be obtained from them.
\end{proof}

The next proposition is the key to our results on the relationship between amenability and proper amenability for extended metric spaces.

\begin{proposition}\label{prop:prepare-for-amennotpamen}
Let $(X,d)$ be a non-empty locally finite extended metric space, and assume that all the coarse connected components of 
$X$ are infinite. Then $X$ is amenable if and only if $X$ is properly amenable.    
\end{proposition}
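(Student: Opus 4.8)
The plan is to exploit Lemma~\ref{lem:proper-cardinality} (valid for extended metric spaces by Remark~\ref{coarse-boundaries}), which characterizes proper amenability as the ability to find F\o lner sets of arbitrarily large cardinality. Since proper amenability trivially implies amenability, only the forward direction needs work: assuming $X$ is amenable and all its coarse connected components are infinite, I want to show that for every $R>0$, $\varepsilon>0$ and $N\in\N$ there is an $(R,\varepsilon)$-F\o lner set of cardinality at least $N$.

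First I would fix $R>0$, $\varepsilon>0$ and $N\in\N$, and use amenability to obtain some finite non-empty $F\in\mathrm{\mbox{F\o l}}(R,\varepsilon')$ for a suitably small $\varepsilon'$ to be chosen later (say $\varepsilon'=\varepsilon/2$). If $|F|\geq N$ we are done, so assume $F$ is ``too small''. The key observation is that $F$ meets only finitely many coarse connected components, and each of those components is infinite by hypothesis; hence, intuitively, there is ``room to grow'' $F$ within its own components. Concretely, I would enlarge $F$ by taking $N_R^+ F$ (the $R$-neighborhood), noting that it is contained in the union of the same finitely many components as $F$, hence is still finite by local finiteness, and its $R$-boundary satisfies $\partial_R(N_R^+ F)\subset\partial_{2R}^+ F$. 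The real point, though, is a pigeonhole/averaging argument: if every $(R,\varepsilon)$-F\o lner set had cardinality bounded by some fixed $M$, then iterating the neighborhood operation, or rather passing to a large disjoint union of translates within the infinite components, would force a violation of the F\o lner inequality.

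A cleaner route I would actually pursue: since amenability is a property of $X$ and each component is infinite, I first reduce to the case where $X$ has a single coarse connected component that is amenable. Indeed, by Proposition~\ref{prop:components-and-amenability} (or rather by the decomposition $\partial_R F=\bigsqcup_i\partial_R F_i$ from Remark~\ref{coarse-boundaries}), if $X$ is amenable then averaging over the components forces at least one component $X_{i_0}$ to admit $(R,\varepsilon)$-F\o lner sets for every $R,\varepsilon$ — that is, $X_{i_0}$ is amenable as an ordinary metric space. Now $X_{i_0}$ is an \emph{infinite} ordinary (locally finite) metric space, so I invoke Corollary~\ref{cor:amenisproperamenforordinarymetricspaces} — wait, that corollary is derived from this proposition, so I cannot use it. Instead, for an infinite ordinary metric space the standard argument applies directly: given $F\in\mathrm{\mbox{F\o l}}(R,\varepsilon/2)$ with $|F|<N$, since $X_{i_0}$ is infinite and connected there is a point $y$ at distance at most some $D$ from $F$ with $y\notin N_R^+ F$ only after finitely many steps, so one can keep absorbing $R$-neighborhoods; more efficiently, one takes finitely many ``copies'' obtained by walking along the space. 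The honest statement is: in an infinite connected locally finite metric space, one can find $(R,\varepsilon)$-F\o lner sets of unbounded cardinality whenever one exists — because if $M$ were a strict upper bound, then choosing an $(R,\varepsilon/2)$-F\o lner set $F$ and a point $x$ with $d(x,F)>2R$ (which exists by infiniteness since $N_{2R}^+ F$ is finite), the set $F$ together with a translate near $x$...

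I realize the slickest formulation uses Lemma~\ref{lem:proper-cardinality} in reverse together with a direct geometric construction, and the single honest obstacle is this: producing, from one small F\o lner set, a genuinely larger one without destroying the boundary-to-volume ratio. The main step — and the one I expect to be the crux — is the following claim: \emph{if $(Z,d)$ is an infinite connected locally finite metric space admitting an $(R,\varepsilon/2)$-F\o lner set $F$, then it admits an $(R,\varepsilon)$-F\o lner set of cardinality $\geq N$}. To prove it, iterate: set $F^{(0)}=F$ and $F^{(k+1)}=N_{2R}^+ F^{(k)}$. Each $F^{(k)}$ is finite, and the sequence $|F^{(k)}|$ is strictly increasing as long as $F^{(k)}\neq Z$, which holds for all $k$ since $Z$ is infinite; hence $|F^{(k)}|\to\infty$. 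It remains to control $|\partial_R F^{(k)}|/|F^{(k)}|$, and here one uses the telescoping/averaging trick from the proof of Theorem~\ref{theorem:amenable-extended-metric}: among $F,N_{2R}^+F,\dots,N_{2R}^{+\,m}F$ for large $m$, some consecutive pair $N_{2R}^{+\,j}F\subset N_{2R}^{+\,j+1}F$ has $|N_{2R}^{+\,j+1}F|\leq(1+\varepsilon)|N_{2R}^{+\,j}F|$ by a pigeonhole on the ratios $|N_{2R}^{+\,j+1}F|/|F|$ (otherwise the cardinality would exceed any bound too fast — but since it's bounded by... no: since the total $|N_{2R}^{+\,m}F|$ grows, while if \emph{every} step multiplied by $>1+\varepsilon$ we'd still just get growth, so instead one bounds it using that these sets eventually stabilize or by choosing $m$ with $|N_{2R}^{+\,m}F|\leq (1+\varepsilon)^{?}$...). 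The correct version: since $|N_{2R}^{+\,k}F|$ is non-decreasing and the space may be infinite, pick the \emph{first} $j$ with $|N_{2R}^{+\,j+1}F|\leq(1+\varepsilon)|N_{2R}^{+\,j}F|$; such $j$ must exist below $N/|F|$ steps... and at that stage $\partial_R(N_{2R}^{+\,j}F)\subset\partial_{4R}^+(N_{2R}^{+\,j-1}F)\subset(N_{2R}^{+\,j+1}F)\setminus(N_{2R}^{+\,j}F)$-ish, giving the F\o lner estimate while the cardinality has grown past $N$. Packaging this carefully — the exact radius bookkeeping with $R$ versus $2R$ versus $4R$ and a final rescaling of $R_0$ as in Theorem~\ref{theorem:amenable-extended-metric} — is the routine-but-fiddly part; the conceptual content is entirely the ``infinite component $\Rightarrow$ unbounded F\o lner cardinality'' claim, after which Lemma~\ref{lem:proper-cardinality} finishes the proof.
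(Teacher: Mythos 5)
Your overall frame (reduce via Lemma~\ref{lem:proper-cardinality} and Remark~\ref{coarse-boundaries} to producing $(R,\varepsilon)$-F\o lner sets of unbounded cardinality) matches the paper, but the two steps you lean on both fail. First, the reduction to ``some single component $X_{i_0}$ is amenable'' is false: Example~\ref{exam:infinite-components-amen} in the paper exhibits an amenable extended space all of whose (infinite) components are non-amenable, because the witnessing F\o lner sets migrate between components as $R$ grows and $\varepsilon$ shrinks. (A per-$(R,\varepsilon)$ averaging, giving for each fixed pair some component with an $(R,\varepsilon)$-F\o lner set, is all you can extract \textemdash~ and that would suffice only if your second claim held.) Second, your crux claim \textemdash~ that an infinite coarsely connected locally finite space admitting a single $(R,\varepsilon/2)$-F\o lner set admits $(R,\varepsilon)$-F\o lner sets of arbitrarily large cardinality \textemdash~ is false as stated. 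Take one point $p$ at distance $100R$ from a copy of the Cayley graph of $\mathbb{F}_2$: the space is infinite and coarsely connected, $\{p\}$ is an $(R,0)$-F\o lner set, yet for small $\varepsilon$ every $(R,\varepsilon)$-F\o lner set has bounded cardinality because any enlargement must enter the expander-like part. Your own iteration $F^{(k+1)}=N_{2R}^{+}F^{(k)}$ stalls at $\{p\}$, and the pigeonhole you reach for (``some consecutive pair of neighborhoods has ratio $\le 1+\varepsilon$'') is exactly the step that has no justification in an infinite space: in $\mathbb{F}_2$ every ratio exceeds $1+\varepsilon$ forever, so the ``first good $j$'' need not exist. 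You flagged this hesitation yourself but did not resolve it.

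The missing idea, which is how the paper's proof actually works, is that one must invoke amenability of $X$ at a scale \emph{depending on the hypothetical maximal F\o lner set}, not at the fixed scale $R$. Assuming $\mathrm{\mbox{F\o l}}(R_0,\varepsilon_0)$ has an element $F_0$ of maximal cardinality $N_0$, the paper sets $R_1=\max_i\{\mathrm{diam}(F_{0,i})+\mathrm{dist}(F_{0,i},X_i\setminus F_{0,i})\}$ (finite precisely because every component is infinite \textemdash~ this is where the hypothesis enters), and then applies amenability with $R>R_0+R_1$ and $\varepsilon<\min\{\varepsilon_0,1/|F_0|\}$ to get a new F\o lner set $F$. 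The choice of $R$ forces $F\not\subset F_0$, and a case analysis on whether $\partial_R F$ is empty, and on how the components of $F$ meet those of $F_0$, produces an $(R_0,\varepsilon_0)$-F\o lner set (either $F$ itself, or $F_0\sqcup F_{j_0}$, or $F_0\cup F$) of cardinality exceeding $N_0$, contradicting maximality. Without letting the scale grow with the geometry of $F_0$, no local growth procedure of the kind you propose can work.
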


\begin{proof}
Suppose that $X= \bigsqcup_{i\in I} X_i$ is amenable, where $X_i$ are the coarse connected components of $X$.
By Remark~\ref{coarse-boundaries}, it is enough to check that for $R>0$ and $\varepsilon>0$ the sets in $\mathrm{\mbox{F\o l}}(R, \varepsilon)$
have unbounded cardinality. Suppose this is not the case, i.e., there is $R_0>0$, $\varepsilon_0$ with $1>\varepsilon_0>0$ and 
$N_0\in\N$ such that $\mathrm{\mbox{F\o l}}(R_0, \varepsilon_0)$ has an element $F_0$ of maximal cardinality $N_0$.
Write $F_0= \bigsqcup_{i\in I_0} F_{0,i}$, where $F_{0,i}$, $i\in I_0$, are the (non-empty) coarse connected components of $F_0$, so that $F_{0,i}= F_0\cap X_i\ne \emptyset $ for 
$i\in I_0$, and $I_0$ is a finite subset of $I$.   
Set 
$$R_1:= \mathrm{max}_{i\in I_0} \{ \mathrm{diam}(F_{0,i}) + \mathrm{dist}(F_{0,i}, X_i\setminus F_{0,i}) \} , $$
where $\mathrm{diam}(F_{0,i})=\max{\{d(x,y) : x,y\in F_{0,i}\}}$ is the diameter of $F_{0,i}$.
Observe that $X_i \setminus F_{0,i}$ is non-empty by our hypothesis that all the coarse connected components of $X$ are infinite.

Choose $R>R_0+R_1$,
and $\varepsilon>0$ such that 
$$\varepsilon < \mathrm{min} \Big{\{} \varepsilon _0, \frac{1}{|F_0|} \Big{\}} .$$ 
 Since $X$ is amenable, there exists 
$F\in \mathrm{\mbox{F\o l}}(R, \varepsilon )$. We claim that $F\not\subset F_0$. Indeed, if $F\subset F_0$, then by the choice of $R_1$ we have
$F_{0,i} \subset \partial_R F_i$ for all $i\in I_0$ such that the coarse connected component $F_i$ of $F$ is non-empty. Let $I_0'$ be the (non-empty) subset of $I_0$
consisting of those $i\in I_0$ such that $F_i\ne \emptyset$. Then we obtain 
$$\frac{|\partial_R F|}{|F|} \ge \frac{\sum_{i\in I_0'}|F_{0,i}|}{\sum_{i\in I_0'}|F_{0,i}|} = 1 > \varepsilon _0 > \varepsilon, $$
hence $F\notin \mathrm{\mbox{F\o l}}(R, \varepsilon )$, proving our claim. 
Write $F=\bigsqcup_{j\in J_0} F_j$, where $J_0$ is finite, and $\{ F_j : j\in J_0 \}$ are the (non-empty) coarse connected components of $F$.
It follows that for some coarse connected component $F_{j_0}$ of $F$, we have $F_{j_0}\not\subset F_{0}$. 

For $k\in I_0\cup J_0$, set
$F_{0,k} = F_0\cap X_k$ and $F_k= F\cap X_k$. (Note that some $F_{0,k}$ or some $F_k$ might be empty.)

We consider next two cases:
\begin{itemize}
 \item[(a)] If $\partial_R(F) \ne \emptyset$, then 
$$\frac{1}{|F|} \le \frac{|\partial_R(F)|}{|F|} \le \varepsilon <\frac{1}{|F_0|}$$
and so, $N_0= |F_0|< |F|$. Hence $F\in \mathrm{\mbox{F\o l}}(R_0, \varepsilon_0)$ with $|F|>N_0$, which is a contradiction
to the maximality of $N_0$.

\item[(b)] If $\partial_R (F) = \emptyset$ we have two possibilities, for each $j\in J_0$:
\begin{itemize}
 \item[(i)] If $F_j\cap F_{0, j}\not=\emptyset$, then  $F_{0,j} \subset F_j $ by using our assumption that $\partial_R(F)=\emptyset$.
 \item[(ii)] $F_j \cap F_{0,j}=\emptyset$.
\end{itemize}
Assume that condition (ii) holds for some $j_0\in J_0$. Then $\widetilde{F}:= F_0\sqcup F_{j_0}$ satisfies 
$$ \frac{|\partial_{R_0} (\widetilde{F})|}{|\widetilde{F}|}
                      \leq  \frac{|\partial_{R_0} (F_0)|+|\partial_{R_0} (F_{j_0})|}{|F_0|+|F_{j_0}|}
                       = \frac{|\partial_{R_0} (F_0)|}{|F_0|+|F_{j_0}|} < \frac{|\partial_{R_0} (F_0)|}{|F_0|}\le \varepsilon_0  ,$$
where the equality follows from the fact that $\partial_R (F) = \emptyset$.
Thus $\widetilde{F}$ is a $(R_0,\varepsilon_0)$-F\o lner set with $| \widetilde{F}| >N_0$ and we have a contradiction.  

If case (i) occurs for all $j\in J_0$, then $J_0\subset I_0$ and $F_{0,j}\subset F_j$ for all $j\in J_0$. Writing $\widetilde{F} = F_0 \cup F$, we have that $|\widetilde{F}| >|F_0|= N_0$, because
$F\not\subset F_0$. Setting $I_0'':= I_0\setminus J_0$, we get, using that $\partial_{R_0}F_j= \emptyset$ for all $j\in J_0$,
\begin{align*}
 \frac{|\partial_{R_0}\widetilde{F}|}{|\widetilde{F}|}  & = \frac{\sum_{j\in J_0} |\partial_{R_0}F_j| + \sum _{i\in I_0''} |\partial_{R_0}F_{0,i}|}{|\widetilde{F}|}\\
 & = \frac{ \sum _{i\in I_0''} |\partial_{R_0}F_{0,i}|}{|\widetilde{F}|} 
  \le  \frac{|\partial_{R_0}F_0|}{|F_0|}\le \varepsilon_0,   
  \end{align*}
so that $\widetilde{F}$ is a $(R_0,\varepsilon_0)$-F\o lner set
of cardinality strictly larger than $N_0$, which is again a contradiction. 
\end{itemize}
In either case we get a contradiction to the maximality of $N_0$ and the proof is concluded.
 \end{proof}

As an immediate consequence of Proposition~\ref{prop:prepare-for-amennotpamen}, we obtain the following result.

\begin{corollary}\label{cor:amenisproperamenforordinarymetricspaces}
 Let $(X,d)$ be a locally finite metric space. Then $(X,d)$ is amenable if
and only if $(X,d)$ is properly amenable.
\end{corollary}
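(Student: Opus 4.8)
The plan is to derive Corollary~\ref{cor:amenisproperamenforordinarymetricspaces} as a direct application of Proposition~\ref{prop:prepare-for-amennotpamen}, after disposing of the case where some coarse connected component is finite. Since an ordinary metric space has exactly one coarse connected component, namely $X$ itself, there are only two possibilities: either $X$ is finite, or $X$ is infinite (and hence its unique coarse connected component is infinite). In the finite case there is nothing to prove, since a finite metric space is properly amenable by taking $F=X$, as observed in the text right before Remark~\ref{rem:notions-for-extended-metric}, and proper amenability trivially implies amenability.

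So the key step is just the reduction: if $(X,d)$ is an infinite locally finite metric space, then viewed as an extended metric space it is non-empty and all of its coarse connected components (there is only one) are infinite, so Proposition~\ref{prop:prepare-for-amennotpamen} applies verbatim and gives that $X$ is amenable if and only if $X$ is properly amenable. I would phrase the argument in one short paragraph: split on whether $X$ is finite or infinite, handle the finite case in one sentence, and invoke Proposition~\ref{prop:prepare-for-amennotpamen} in the infinite case.

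There is essentially no obstacle here; the real work was already carried out in the proof of Proposition~\ref{prop:prepare-for-amennotpamen}. The only thing to be slightly careful about is the implicit identification of an ordinary (everywhere-finite-distance) metric space with an extended metric space having a single coarse connected component, and the trivial observation that proper amenability always implies amenability (which follows immediately from the definitions, or from Remark~\ref{rmk:charac-metric-amenability}, since the exhaustion condition $X=\liminf_i F_i$ is an extra requirement). Thus the proof is a two-line deduction.

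\begin{proof}
If $X$ is finite, then it is properly amenable by taking $F = X$, and in particular amenable; conversely proper amenability always implies amenability. If $X$ is infinite, then, regarded as an extended metric space, it is non-empty and has a single coarse connected component, namely $X$ itself, which is infinite. Hence Proposition~\ref{prop:prepare-for-amennotpamen} applies and shows that $X$ is amenable if and only if $X$ is properly amenable.
\end{proof}
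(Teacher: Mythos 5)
Your proof is correct and follows exactly the route the paper intends: the paper derives this corollary as "an immediate consequence" of Proposition~\ref{prop:prepare-for-amennotpamen}, and your case split (finite $X$ trivially properly amenable; infinite $X$ has a single infinite coarse connected component, so the proposition applies) is precisely the implicit argument. Nothing is missing.
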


We can now obtain the characterization of the amenable but not properly amenable extended metric spaces. This should be compared to 
Theorem~\ref{theorem:when-automatically-proper-alg-amen} in the algebraic setting.

\begin{corollary}\label{cor:characamenable-nprpamen}
Let $(X,d)$ be a locally finite extended metric space with infinite cardinality. 
Then $X$ is amenable but not properly amenable if and only if $X= Y_1\sqcup Y_2$, where $Y_1$ is a finite non-empty subset of $X$,
$Y_2$ is non-amenable and $d(x,y)= \infty $ for $x\in Y_1$ and $y\in Y_2$. 
\end{corollary}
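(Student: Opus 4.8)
The plan is to prove the two implications separately, in each case reducing to the cardinality characterization of proper amenability (Lemma~\ref{lem:proper-cardinality}, valid in the extended setting by Remark~\ref{coarse-boundaries}) and to Proposition~\ref{prop:prepare-for-amennotpamen}. The one recurring technical ingredient is the following transparency of boundaries with respect to a union of components: if $Y\subseteq X$ is a union of coarse connected components of $X$ (equivalently $d(Y,X\setminus Y)=\infty$), then for every finite $F\subseteq Y$ and every $R>0$ one has $\partial_R F=\partial_R^{Y} F$ when the latter is computed inside $Y$, and more generally, for a finite $F\subseteq X$, $\partial_R^{Y}(F\cap Y)=\partial_R F\cap Y$. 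Both identities follow immediately by applying the component-wise decomposition $\partial_R F=\bigsqcup_i\partial_R(F\cap X_i)$ of Remark~\ref{coarse-boundaries} and observing that the boundary of a subset of a component stays in that component. In particular, F\o lner sets supported in $Y$ are the same whether measured in $X$ or in $Y$, so by Lemma~\ref{lem:proper-cardinality} one may pass freely between proper amenability of $Y$ and the existence, inside $X$, of F\o lner sets of arbitrarily large cardinality supported in $Y$.

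For the ``if'' direction, assume $X=Y_1\sqcup Y_2$ with $Y_1$ finite non-empty, $Y_2$ non-amenable and $d(Y_1,Y_2)=\infty$. Then $Y_1$ is a union of coarse connected components, each finite (being a subset of $Y_1$); since $Y_1\neq\emptyset$, $X$ has a finite — hence amenable — coarse connected component, so $X$ is amenable by Proposition~\ref{prop:components-and-amenability}. To see that $X$ is not properly amenable, suppose it were; fix $R>0$ and $\varepsilon>0$ and use Lemma~\ref{lem:proper-cardinality} to get $F\in\mathrm{\mbox{F\o l}}(R,\varepsilon/2)$ with $|F|\geq 2|Y_1|$. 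Put $F':=F\cap Y_2=F\setminus Y_1$. Then $|F\cap Y_1|\leq|Y_1|\leq|F|/2$, so $|F'|\geq|F|/2>0$, while $\partial_R^{Y_2}F'=\partial_R F\cap Y_2\subseteq\partial_R F$ by the above, whence $|\partial_R^{Y_2}F'|\leq(\varepsilon/2)|F|\leq\varepsilon|F'|$. Thus $F'$ is an $(R,\varepsilon)$-F\o lner set of $Y_2$; as $R,\varepsilon$ were arbitrary this makes $Y_2$ amenable, a contradiction.

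For the ``only if'' direction, assume $X$ is amenable but not properly amenable, and let $Y_1$ be the union of all finite coarse connected components of $X$ and $Y_2$ the union of the infinite ones; then $X=Y_1\sqcup Y_2$ and $d(Y_1,Y_2)=\infty$ automatically. First, $X$ cannot have all of its components infinite, since otherwise Proposition~\ref{prop:prepare-for-amennotpamen} would force amenability to coincide with proper amenability; hence $Y_1\neq\emptyset$. Second, $X$ cannot have infinitely many finite components: the union of any finite number of whole (finite) components has empty $R$-boundary for every $R$, so from infinitely many of them one could, via Lemma~\ref{lem:proper-cardinality}, manufacture F\o lner sets of arbitrarily large cardinality and conclude $X$ is properly amenable — a contradiction. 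Therefore $Y_1$ is a finite non-empty union of components, and since $|X|=\infty$ this forces $Y_2\neq\emptyset$. Finally, all coarse connected components of $Y_2$ are infinite, so by Proposition~\ref{prop:prepare-for-amennotpamen} applied to $Y_2$, amenability of $Y_2$ would imply its proper amenability; but then, by the boundary transparency of the first paragraph, F\o lner sets of unbounded cardinality in $Y_2$ would be F\o lner sets of unbounded cardinality in $X$, making $X$ properly amenable — contradiction. Hence $Y_2$ is non-amenable, and $X=Y_1\sqcup Y_2$ is the desired decomposition.

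The only step I expect to require genuine care is the boundary transparency recorded in the first paragraph: although $\partial_R(\cdot)$ is not monotone under set inclusion in general, it is well behaved — and in particular inclusion-monotone — once one restricts attention to subsets supported in a fixed union of coarse connected components, and this is precisely what lets the F\o lner condition be transported between $X$ and $Y_2$ in both directions. Everything else is bookkeeping with Lemma~\ref{lem:proper-cardinality} and Proposition~\ref{prop:prepare-for-amennotpamen} (compare also Theorem~\ref{theorem:amenable-extended-metric} for the role of non-amenability of $Y_2$).
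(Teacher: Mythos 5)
Your proof is correct and follows essentially the same route as the paper's: the same decomposition of $X$ into the union $Y_1$ of its finite coarse connected components and the union $Y_2$ of its infinite ones, with Lemma~\ref{lem:proper-cardinality}, Remark~\ref{coarse-boundaries} and Proposition~\ref{prop:prepare-for-amennotpamen} doing the same work in the same places. Your ``boundary transparency'' observation is just a slightly more explicit packaging of Remark~\ref{coarse-boundaries}, and your restriction estimate in the ``if'' direction (an $(R,\varepsilon/2)$-F\o lner set of size at least $2|Y_1|$ intersected with $Y_2$) is a marginally cleaner version of the paper's $\delta(1+\delta)<\varepsilon$ computation.
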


\begin{proof}
Assume first that $X= Y_1\sqcup Y_2$, where $Y_1$ is a finite non-empty subset of $X$,
$Y_2$ is non-amenable and $d(x,y)= \infty $ for $x\in Y_1$ and $y\in Y_2$. Observe that $Y_1$ is the disjoint 
union of some coarse connected components of $X$, and $Y_2$ is the disjoint union of the rest of the coarse connected components of $X$. 
Clearly $Y_1$ is a finite non-empty subset of $X$ such that $\partial _R(Y_1)=\emptyset$
for all $R>0$. Hence $X$ is amenable. One can easily show that, if $X$ is properly amenable, 
then $Y_2$ is also properly amenable, contradicting our hypothesis. Indeed, given and $R>0$, $\varepsilon >0$ and $N>0$, take 
a subset $F$ of $Y_2$ such that 
$$\frac{|\partial_R(Y_1\sqcup F)|}{|Y_1\sqcup F|} \le \delta ,$$
where $\delta $ satisfies $0 < \delta (1+\delta ) < \varepsilon$, and $|F|\ge \mathrm{max} \{N, \frac{|Y_1|}{\delta}\}$. Then $F$ is a $(R,\epsilon )$-F\o lner subset
of $Y_2$ with $|F|\ge N$, as desired. Hence, $X$ is amenable but not properly amenable. 

Suppose now that $X$ is amenable but not properly amenable. We first show that there are only a finite number of finite components. 
Indeed, if $X_1,X_2,\ldots , $ is an infinite sequence of finite coarse connected components, 
then $\bigsqcup_{i=1}^n X_i$ are F\o lner $(R, 0)$-subsets of unbounded cardinality in $X$, and so 
$X$ is properly amenable by Remark~\ref{coarse-boundaries}, giving a contradiction.
Hence there is only a finite number of finite coarse connected components $X_1,\ldots , X_N$. Let $Y_1= \bigsqcup _{i=1}^N X_i$, and let $Y_2=X\setminus Y_1$.
Then all the coarse connected components of $Y_2$ are infinite. If $Y_2$ is amenable, then it is also properly amenable by  
Proposition ~\ref{prop:prepare-for-amennotpamen}, 
and so $X$ is also properly amenable, contradicting our hypothesis. Hence $Y_2$ is non-amenable. Since $X$ is amenable by hypothesis, we conclude that $Y_1\ne \emptyset$.
This concludes the proof.
\end{proof}

\section{Algebraic amenability}\label{sec:alg-amenable3}

In this section we will analyze from different points of view a version of amenability for $\mathbb{K}$-algebras, 
where $\mathbb{K}$ is a field. 
Our definition will follow existing notions in the literature
(see Section~1.11 in \cite{Gromov99} and \cite{Elek03,Cec-Sam-08}), but we aim to generalize previous definitions and results in a systematical fashion. 
To simplify terminology, we will often not mention $\mathbb{K}$ explicitly. 
For instance, we may call $\mathbb{K}$-algebras just algebras, and $\mathbb{K}$-dimensions just dimensions.

\begin{definition}\label{def:alg-amenable2}
 Let $\cA$ be a $\mathbb{K}$-algebra. 
 \begin{itemize}
  \item[(i)] Let $\mathcal{F}\subset\cA$ be a finite subset and $\varepsilon \geq 0$. Then a nonzero finite-dimensional
   linear subspace $W \subset \cA$ is called a left \emph{$(\mathcal{F}, \varepsilon)$-F\o lner subspace} 
   if it satisfies
  \begin{equation}\label{eq:alg-amen2}
   \frac{\dim(a W +W)}{\dim(W)}\leq 1+\varepsilon\;,  
   \quad\mathrm{for~all}\quad a\in\mathcal{F}\;.
  \end{equation}
  The collection of $(\mathcal{F}, \varepsilon)$-F\o lner subspaces of $\cA$ is denoted by $\mathrm{\mbox{F\o l}}(\cA, \mathcal{F}, \varepsilon)$.
  \item[(ii)] $\cA$ is left \emph{algebraically amenable} if for 
  any $\varepsilon >0$ and any finite set $\mathcal{F}\subset\cA$, there exists a left $(\mathcal{F}, \varepsilon)$-F\o lner subspace.
  \item[(iii)] $\cA$ is \emph{properly} left algebraically amenable if for any $\varepsilon >0$ and any finite set $\mathcal{F}\subset\cA$, 
  there exists a left $(\mathcal{F}, \varepsilon)$-F\o lner subspace $W$ such that $\mathcal{F} \subset W$. 
 \end{itemize}
\end{definition}

We may also define \emph{right} F\o lner subspaces, \emph{right} algebraic amenability and proper \emph{right} algebraic amenability by 
replacing $\cA$ with $\cA^\mathrm{op}$ in the above definitions. Since the two situations are completely symmetric, we will stick with the left
versions of the definitions. For simplicity we are going to drop the term ``left'' for the rest of this section.
Any algebra satisfying $\dim(\cA)<\infty$ is obviously properly algebraically amenable by taking $W=\cA$.

\begin{remark}\label{rmk:charac-alg-amen2}
 There are some slightly different, but equivalent, ways to define (proper) algebraic amenability. For example, since for any 
 $\varepsilon >0$ and any finite set $\mathcal{F}\subset\cA$, an $(\mathcal{F}, \varepsilon)$-F\o lner subspace also satisfies 
 \[
  \frac{\dim( \mathrm{span}( \mathcal{F} W +W))}{\dim(W)}\leq 1+ |\mathcal{F}| \varepsilon\;,
 \]
 we may equivalently define algebraic amenability for $\cA$ as saying that for 
  any $\varepsilon >0$ and any finite set $\mathcal{F}\subset\cA$, there exists a nonzero finite-dimensional linear subspace $W$ such that 
 \[
  \frac{\dim( \mathrm{span}( \mathcal{F} W +W))}{\dim(W)}\leq 1+\varepsilon\;.
 \]
 Since with regard to the relation of set containment, $\mathrm{\mbox{F\o l}}(\cA, \mathcal{F}, \varepsilon)$ is monotonically
 decreasing with respect to $\mathcal{F}$ and monotonically increasing with respect to $\varepsilon$, we may also employ nets to simplify the
 quantifier-laden ``local'' condition used in the above definition:
 \begin{itemize}
  \item[(i)] Algebraic amenability of $\cA$ is equivalent to the existence of a net $\{W_i\}_{i\in I}$ of finite-dimensional linear subspaces such that 
  \[
   \lim_{i} \frac{\dim(a W_i +W_i)}{\dim(W_i)} = 1 \;,  
   \quad\mathrm{for~all}\quad a\in\cA\;.
  \]
  \item[(ii)] Proper algebraic amenability of $\cA$ requires, in addition, that this net $\{W_i\}_{i\in I}$ satisfies 
  $ \cA = \liminf_{i} W_i $, where $ \liminf_{i} W_i := \bigcup_{j \in I} \bigcap_{i \geq j} W_i $.
 \end{itemize}
\end{remark}

\begin{remark}
\begin{itemize}
  \item[(i)] 
 The notion given by Elek in Definition~1.1 of \cite{Elek03} in fact corresponds to \emph{proper} algebraic amenability, as will 
 become evident in the next proposition (see also Definition~3.1 in \cite{Cec-Sam-08}). Nevertheless, since the main results in 
 Elek's paper restrict to the case of algebras with no zero divisors, alebraic amenability and proper algebraic amenability are equivalent
 (see Corollary~\ref{cor:non-zero} below). 
 \item[(ii)]
 In Definition~4.3 of \cite{Bart},  Bartholdi uses the name {\em exhaustively amenable} instead of properly amenable.
\end{itemize}
 \end{remark}

Notice that although the definition works for $\mathbb K$-algebras of arbitrary dimensions, the property of algebraic amenability is in essence a property for countably dimensional algebras, as seen in the next proposition.

\begin{proposition}\label{pro:alg-amenability-countability}
 A $\mathbb K$-algebra $\cA$ is (properly) algebraically amenable if and only if any countable subset in $\cA$ is contained in a countably dimensional $\mathbb K$-subalgebra that is (properly) algebraically amenable.
\end{proposition}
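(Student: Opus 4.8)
The plan is to prove both directions of the ``if and only if''. The backward direction is the easy one: suppose every countable subset of $\cA$ sits inside a countably dimensional, (properly) algebraically amenable subalgebra. To verify (proper) algebraic amenability of $\cA$ directly from Definition~\ref{def:alg-amenable2}, fix $\varepsilon>0$ and a finite subset $\mathcal F\subset\cA$. Since $\mathcal F$ is in particular countable, it lies in a countably dimensional subalgebra $\cB$ that is (properly) algebraically amenable; applying the definition inside $\cB$ produces a nonzero finite-dimensional $W\subset\cB\subset\cA$ with $\dim(aW+W)/\dim(W)\le 1+\varepsilon$ for all $a\in\mathcal F$ (and, in the proper case, with $\mathcal F\subset W$). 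This is exactly what is required for $\cA$, so $\cA$ is (properly) algebraically amenable.

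For the forward direction, suppose $\cA$ is (properly) algebraically amenable and let $S_0\subset\cA$ be a countable subset. I would build the desired subalgebra by a countable exhaustion/back-and-forth argument. Enlarge $S_0$ to a countable subalgebra $\cB_0$ (the subalgebra it generates is still countably dimensional). Then inductively construct an increasing chain of countable subsets $S_0\subset S_1\subset S_2\subset\cdots$ as follows: having $S_n$, for each finite $\mathcal F\subset S_n$ and each $\varepsilon\in\{1/k:k\in\N\}$, invoke algebraic amenability of $\cA$ to pick an $(\mathcal F,\varepsilon)$-F\o lner subspace $W_{\mathcal F,\varepsilon}$ (in the proper case, with $\mathcal F\subset W_{\mathcal F,\varepsilon}$); since $S_n$ is countable there are only countably many such pairs $(\mathcal F,\varepsilon)$, and each $W_{\mathcal F,\varepsilon}$ is finite-dimensional, so the union of $S_n$ with all these chosen bases (or spanning sets) of the $W_{\mathcal F,\varepsilon}$ is still countable; let $S_{n+1}$ be the subalgebra generated by this union, which remains countably dimensional. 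Set $\cB:=\bigcup_{n}S_n$. This $\cB$ is a countably dimensional subalgebra containing $S_0$, and by construction, for every finite $\mathcal F\subset\cB$ and every $\varepsilon=1/k$, there is some $n$ with $\mathcal F\subset S_n$, so the F\o lner subspace $W_{\mathcal F,\varepsilon}$ was selected at stage $n$ and is contained in $S_{n+1}\subset\cB$; hence $W_{\mathcal F,\varepsilon}\in\mathrm{\mbox{F\o l}}(\cB,\mathcal F,\varepsilon)$ (it has the right codimension bound since those dimensions are computed intrinsically, and in the proper case it still contains $\mathcal F$). Since arbitrary $\varepsilon>0$ dominates some $1/k$, this shows $\cB$ is (properly) algebraically amenable.

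The main point requiring a little care — and the step I'd expect to be the only real obstacle — is the bookkeeping that keeps everything countable at each stage: one must ensure that at stage $n$ the set of ``relevant'' pairs $(\mathcal F,\varepsilon)$ (finite subsets of a countable set, times a countable set of $\varepsilon$'s) is countable, that each chosen F\o lner subspace contributes only finitely many new elements, and that passing to the generated subalgebra preserves countable dimensionality. Once this is set up, the verification that $\cB$ inherits (proper) algebraic amenability is immediate, because the defining inequality for a F\o lner subspace, namely $\dim(aW+W)\le(1+\varepsilon)\dim(W)$, only involves the subspace $W$, the element $a$, and the ambient multiplication, all of which are the same whether computed in $\cB$ or in $\cA$; and in the proper case the condition $\mathcal F\subset W$ is likewise preserved verbatim.
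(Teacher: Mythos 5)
Your overall strategy coincides with the paper's: the backward direction is handled exactly as in the paper, and for the forward direction both you and the paper build an increasing chain of countably dimensional subalgebras that absorb chosen F\o lner subspaces. However, there is a genuine gap in your forward direction when the field $\K$ is uncountable, and it sits precisely at the step you flag as ``bookkeeping''. You alternate between treating $S_n$ as a countable \emph{set} and as the \emph{subalgebra} generated by a countable set; these are not the same unless $\K$ is countable. If $S_{n+1}$ is the generated subalgebra, it is countably dimensional but typically uncountable as a set, so at the next stage ``all finite subsets $\mathcal F\subset S_{n+1}$'' is an uncountable family and the span of the chosen $W_{\mathcal F,\varepsilon}$ need not be countably dimensional. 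If instead $S_{n+1}$ is kept as a countable set, then $\cB=\bigcup_n S_n$ is not a subalgebra; after passing to the subalgebra it generates, a finite subset $\mathcal F$ of that subalgebra consists of $\K$-linear combinations of elements of the $S_n$'s and need not lie in any $S_n$, so your construction supplies no F\o lner subspace for such $\mathcal F$.

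The missing ingredient is the linearity observation of Remark~\ref{rmk:charac-alg-amen2}: if $W$ is $(\mathcal F,\varepsilon)$-F\o lner, then $\dim(\mathrm{span}(\mathcal F W+W))\le(1+|\mathcal F|\varepsilon)\dim(W)$, hence $W$ is $(a,|\mathcal F|\varepsilon)$-F\o lner for every $a\in\mathrm{span}(\mathcal F)$. This is why it suffices to choose F\o lner subspaces only for the finite initial segments $\{e_1,\ldots,e_k\}$ of a fixed countable basis $\{e_k\}$ of the current subalgebra, with $\varepsilon=1/k$: any $a$ in the subalgebra lies in $\mathrm{span}(e_1,\ldots,e_K)$ for some fixed $K$, and $W_k$ is then $(a,K/k)$-F\o lner with $K/k\to 0$. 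This is exactly how the paper's proof is arranged, and with that modification (which reduces each stage to countably many choices) your argument goes through; for countable $\K$ your proof is correct essentially as written.
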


\begin{proof}
 For the forward direction, we assume $\cA$ is (properly) algebraically amenable and let $\mathcal C \subset \cA$ be an arbitrary countable subset. Using the fact that a subalgebra generated by a countable set or a countably dimensional linear subspace is countably dimensional, we define an increasing sequence $\{\cB_i\}_{i=0}^\infty$ of countably dimensional $\mathbb K$-subalgebras in $\cA$ as follows: 
 \begin{itemize}
  \item We let $\cB_0$ be the subalgebra generated by $\mathcal C$.
  \item Suppose $\cB_i$ has been defined. Let $\{e_k\}_{k=1}^\infty$ be a basis of $\cB_i$. By the (proper) algebraic amenability of $\cA$, for each positive integer $k$, we may find a finite dimensional linear subspace $W_k \subset \cA$ that is $(\{e_1, \ldots, e_k\}, \frac{1}{k})$-F\o lner (and contains $\{e_1, \ldots, e_k\}$ in the case of proper algebraic amenability). We define $\cB_{i+1}$ to be the subalgebra generated by the countably dimensional linear subspace $\cB_i + W_1 + W_2 + \ldots$. 
 \end{itemize}
 Now define the countably dimensional subalgebra $\cB = \bigcup_{i=0}^\infty \cB_i$. It is routine to verify that $\cB$ is (properly) algebraically amenable.
 
 Conversely, in order to check (proper) algebraic amenability of $\cA$, we fix $\varepsilon>0$ and an arbitrary finite subset $\cF \subset \cA$. By assumption, $\cF$ is contained in a countably dimensional subalgebra that is (properly) algebraically amenable, which is enough to produce the desired $(\cF, \varepsilon)$-F\o lner subspace. 
\end{proof}

Just as in the case of metric spaces in Section~\ref{sec:amenability-metric}, we are interested in the distinctions and relations between amenability and proper amenability. For example, when $\cA$ is finite dimensional, then the two notions clearly coincide. The general situation bears strong similarity to the case of metric spaces. To begin with, we present a few more ways to characterize proper algebraic amenability (for infinite dimensional algebras). The first half of the following proposition should be considered as the algebraic counterpart
of what we already showed in Lemma~\ref{lem:proper-cardinality} in the context of metric spaces. 

\begin{proposition}\label{pro:proper-alg-amen2}
 Let $\cA$ be an infinite dimensional $\mathbb{K}$-algebra. Then the following conditions are equivalent:
 \begin{itemize}
  \item[(1)] $\cA$ is properly algebraically amenable.
  \item[(2)] For any $\varepsilon >0$, $N \in \N$ and any finite set $\mathcal{F}\subset\cA$ there exists an 
  $(\mathcal{F}, \varepsilon)$-F\o lner subspace $W$ such that
  \[
   \dim(W) \geq  N \;.
  \]
 \end{itemize}
 When $\cA$ is unital, they are also equivalent to
 \begin{itemize}
  \item[(3)] For any $\varepsilon >0$ and any finite set $\mathcal{F}\subset\cA$ there exists an 
  $(\mathcal{F}, \varepsilon)$-F\o lner subspace that contains $ \1_{\cA} $.
 \end{itemize}
\end{proposition}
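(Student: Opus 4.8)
The plan is to establish the cycle of implications $(1)\Rightarrow(3)\Rightarrow(2)\Rightarrow(1)$ (with $(3)$ only entering when $\cA$ is unital), mirroring the structure of Lemma~\ref{lem:proper-cardinality} and its proof in the metric setting. The implication $(1)\Rightarrow(2)$ is immediate: given $\varepsilon, N, \mathcal F$, apply proper algebraic amenability to the finite set $\mathcal F \cup \{b_1, \dots, b_N\}$, where $b_1, \dots, b_N$ are linearly independent elements of $\cA$ (these exist since $\dim \cA = \infty$); the resulting F\o lner subspace $W$ contains $\mathcal F \cup \{b_1, \dots, b_N\}$, hence $\dim W \geq N$. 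Similarly $(1)\Rightarrow(3)$ in the unital case is trivial, since one simply includes $\1_\cA$ in the finite set to be absorbed.

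The substantive direction is $(2)\Rightarrow(1)$, which is the linearized analogue of the argument in Lemma~\ref{lem:proper-cardinality}. Given $\varepsilon > 0$ and a finite set $\mathcal F \subset \cA$, let $V := \mathrm{span}(\mathcal F)$, a finite-dimensional subspace, and set $d := \dim\big( \mathrm{span}(\mathcal F V + V) \big) < \infty$. Using $(2)$ with the finite set $\mathcal F$, the parameter $\varepsilon/2$, and $N := \lceil 2d/\varepsilon \rceil$, we obtain an $(\mathcal F, \varepsilon/2)$-F\o lner subspace $W_0$ with $\dim W_0 \geq N$. Now put $W := W_0 + V$; then $\mathcal F \subset V \subset W$, and we estimate, for any $a \in \mathcal F$,
\[
 \dim(aW + W) \leq \dim(aW_0 + W_0) + \dim\big(\mathrm{span}(\mathcal F V + V)\big) \leq \Big(1 + \tfrac{\varepsilon}{2}\Big)\dim W_0 + d,
\]
so that
\[
 \frac{\dim(aW+W)}{\dim W} \leq \frac{(1+\tfrac\varepsilon2)\dim W_0 + d}{\dim W_0} \leq 1 + \frac{\varepsilon}{2} + \frac{d}{N} \leq 1 + \varepsilon,
\]
which shows $W \in \mathrm{\mbox{F\o l}}(\cA, \mathcal F, \varepsilon)$ and $\mathcal F \subset W$. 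Since $\varepsilon$ and $\mathcal F$ were arbitrary, $\cA$ is properly algebraically amenable. The only point requiring a little care is the dimension inequality $\dim(aW + W) \leq \dim(aW_0 + W_0) + \dim(\mathrm{span}(\mathcal F V + V))$: this follows because $aW + W \subseteq (aW_0 + W_0) + (aV + V)$ and $aV + V \subseteq \mathrm{span}(\mathcal F V + V)$ for $a \in \mathcal F$.

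Finally, to close the loop through $(3)$ in the unital case, I would prove $(3)\Rightarrow(2)$: given $\varepsilon, N, \mathcal F$, note that if $W$ is an $(\mathcal F \cup \mathcal F', \varepsilon')$-F\o lner subspace containing $\1_\cA$, where $\mathcal F' = \{a_1, a_1 a_2, \dots, a_1 a_2 \cdots a_N\}$ for some suitable choices making these $N$ elements linearly independent (which is possible in an infinite dimensional unital algebra), then applying the F\o lner condition for each $a_1 \cdots a_k$ to the vector $\1_\cA \in W$ shows inductively that $W$ has large dimension — or, more simply, absorb directly $N$ linearly independent elements together with $\1_\cA$ using $(3)$ applied to their union with $\mathcal F$. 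The main obstacle, such as it is, lies in organizing the bookkeeping of the dimension estimates in $(2)\Rightarrow(1)$ so that the additive error term $d$ is controlled uniformly by the lower bound $N$ on $\dim W_0$; this is exactly parallel to the cardinality bound $|F| \geq 2|\partial_R A|/\varepsilon$ in Lemma~\ref{lem:proper-cardinality}, and presents no genuine difficulty once the correct analogue of $|\partial_R A|$, namely $d = \dim(\mathrm{span}(\mathcal F V + V))$, is identified.
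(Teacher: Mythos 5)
Your proposal is correct and follows essentially the same route as the paper's proof: the implication $(1)\Rightarrow(2)$ by absorbing $N$ linearly independent elements into the finite set, and $(2)\Rightarrow(1)$ by taking $W = W_0 + \mathrm{span}(\cF)$ for a sufficiently high-dimensional F\o lner subspace $W_0$ and controlling the additive error $\dim(\mathrm{span}(\cF V + V))$ against the lower bound on $\dim W_0$, exactly as the paper does with its bound $\dim(V)\geq 4|\cF|/\varepsilon$. One caveat on $(3)\Rightarrow(2)$: your ``more simply, absorb directly $N$ linearly independent elements together with $\1_{\cA}$'' does not work, since condition $(3)$ only guarantees that the F\o lner subspace contains $\1_{\cA}$, not the finite set $\cF$; however, your first route is sound and is the paper's argument \textemdash~ enlarge $\cF$ to contain $N$ linearly independent elements $b_1,\dots,b_N$, note that $\1_{\cA}\in W$ forces $\mathrm{span}(\cF W + W)\supseteq \mathrm{span}(\cF)$, and combine this with the upper bound $\dim(\mathrm{span}(\cF W + W))\leq (1+|\cF|\varepsilon)\dim(W)$ from Remark~\ref{rmk:charac-alg-amen2} to force $\dim(W)\geq N/(1+|\cF|\varepsilon)$.
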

\begin{proof}
 The implication (1) $\Rightarrow$ (2) is immediate from the definition, since $\cF \subset W$ implies $\dim(W) \geq \dim( \mathrm{span}(\mathcal{F}))$, while the latter may be made arbitrarily large since $\cA$ is infinite dimensional. 
 
 Next we show the converse: (2) $\Rightarrow$ (1). Given any $\varepsilon >0$ and any finite
set $\mathcal{F}\subset\cA$, we may obtain from (2) a finite-dimensional linear subspace $V\subset \cA$ such that $ \dim(V) \geq \frac{4
|\mathcal{F}|}{\varepsilon} $ and 
 \[
  \frac{\dim(a V +V)}{\dim(V)}\leq 1+ \frac{\varepsilon}{2} \;,  
  \quad\mathrm{for~all}\quad a\in\mathcal{F}\;.
 \]
 Define $ W := V + \mathrm{span}(\mathcal{F}) $, a finite-dimensional linear subspace that contains $\mathcal{F}$. Moreover, 
 for all $a\in\mathcal{F}$,
 \[
  \frac{\dim(a W +W)}{\dim(W)} \leq \frac{\dim(a V +V) + \dim( \mathrm{span}(a \mathcal{F}  \cup \mathcal{F}))}{\dim(V)} \leq 1+ \frac{\varepsilon}{2}
+ \frac{\varepsilon}{2} \leq 1+\varepsilon \;.
 \]
 This proves (1) by definition.
 
 Now assume $\cA$ is unital. The implication (1) $\Rightarrow$ (3) is trivial from the definition, while (3) $\Rightarrow$ (2) is also easy in view of
 Remark~\ref{rmk:charac-alg-amen2}, after observing that 
 $ \1_{\cA} \in W $ implies $\dim( \mathrm{span}( \mathcal{F} W +W)) \geq \dim( \mathrm{span}(\mathcal{F}))$. This shows that (3) is equivalent to (1) and (2). 
\end{proof}

A notable difference between algebraic amenability and proper algebraic amenability lies in their behaviors under unitization. Recall that 
for a (possibly unital) $\mathbb{K}$-algebra, the \emph{unitization} of $\cA$, denoted by $\widetilde{\cA}$, is defined to be the unital algebra 
linearly isomorphic to $\cA \oplus \mathbb{K}$, with the product defined by $(a, \lambda) (b, \mu) = (ab + \mu a + \lambda b, \lambda \mu)$ for 
any $(a, \lambda), (b, \mu) \in \cA \oplus \mathbb{K}$. The element $(0,1)$ now serves as the unit $\1_{\widetilde{\cA}}$. 
Observe that when $\cA$ already has a unit, then $\widetilde{\cA} \cong \cA \times \mathbb{K}$ as an algebra. 
 
 \begin{proposition}\label{pro:amenability-unitalization}
  Let $\cA$ be a $\mathbb{K}$-algebra. Then
  \begin{enumerate}
   \item $\widetilde{\cA}$ is algebraically amenable if ${\cA}$ is algebraically amenable.
   \item $\widetilde{\cA}$ is properly algebraically amenable if and only if $\cA$ is properly algebraically amenable.
  \end{enumerate}
 \end{proposition}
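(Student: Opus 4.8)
The plan is to exploit that $\cA$ embeds into $\widetilde{\cA}$ as a codimension-one two-sided ideal via $a\mapsto(a,0)$, so that $\widetilde{\cA}=\cA\oplus\mathbb{K}\1_{\widetilde{\cA}}$ as a vector space with $\1_{\widetilde{\cA}}\notin\cA$. Two elementary facts will be used throughout. First, since $\cA\hookrightarrow\widetilde{\cA}$ is an algebra homomorphism, for $a\in\cA$ and a linear subspace $W\subseteq\cA$ the product $aW$ lies in $\cA$ and is the same whether computed in $\cA$ or in $\widetilde{\cA}$; hence all dimension counts involving such products agree in the two algebras. Second, for any $x=a+\lambda\1_{\widetilde{\cA}}\in\widetilde{\cA}$ (with $a\in\cA$, $\lambda\in\mathbb{K}$) and any subspace $W$ we have $xW\subseteq aW+W$, so the F\o lner estimate for $a$ on $W$ implies the one for $x$ on $W$. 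Consequently, controlling a finite set $\mathcal{F}\subseteq\widetilde{\cA}$ reduces to controlling its finite ``$\cA$-part'' $\mathcal{F}^{\circ}:=\{\,a\in\cA : a+\lambda\1_{\widetilde{\cA}}\in\mathcal{F}\text{ for some }\lambda\in\mathbb{K}\,\}$: any $(\mathcal{F}^{\circ},\varepsilon)$-F\o lner subspace $W\subseteq\cA$ is automatically an $(\mathcal{F},\varepsilon)$-F\o lner subspace of $\widetilde{\cA}$.

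Given this, Part~(1) is immediate: a F\o lner subspace of $\cA$ for $(\mathcal{F}^{\circ},\varepsilon)$, supplied by algebraic amenability of $\cA$, serves directly as one in $\widetilde{\cA}$ for $(\mathcal{F},\varepsilon)$.

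For the ``if'' implication in Part~(2) I would take, by proper algebraic amenability of $\cA$, a F\o lner subspace $W\subseteq\cA$ for $(\mathcal{F}^{\circ},\varepsilon)$ with $\mathcal{F}^{\circ}\subseteq W$, and set $W':=W+\mathbb{K}\1_{\widetilde{\cA}}$. Then $\mathcal{F}\subseteq W'$ and $\dim W'=\dim W+1$ (as $\1_{\widetilde{\cA}}\notin\cA\supseteq W$), and for $x=a+\lambda\1_{\widetilde{\cA}}\in\mathcal{F}$ a short computation gives $xW'+W'\subseteq(aW+W)+\mathbb{K}\1_{\widetilde{\cA}}$ (using $xW\subseteq aW+W$, $\mathbb{K}x\subseteq W+\mathbb{K}\1_{\widetilde{\cA}}$ since $a\in\mathcal{F}^{\circ}\subseteq W$, and $x\1_{\widetilde{\cA}}=x$). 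Hence $\dim(xW'+W')\le\dim(aW+W)+1\le(1+\varepsilon)\dim W+1$, so that $\dim(xW'+W')/\dim W'\le\bigl((1+\varepsilon)\dim W+1\bigr)/(\dim W+1)\le 1+\varepsilon$; thus $W'$ witnesses proper algebraic amenability of $\widetilde{\cA}$.

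The ``only if'' implication is the subtle point, and the one I expect to contain the only real content: passing from a F\o lner subspace $V\subseteq\widetilde{\cA}$ to $V\cap\cA$ costs at most one dimension (since $\widetilde{\cA}/\cA$ is one-dimensional), which is harmless only when $\dim(V\cap\cA)$ is large. So I would first reduce to the case where $\cA$ is infinite dimensional (otherwise $\cA$ is automatically properly algebraically amenable). Then, given a finite $\mathcal{F}\subseteq\cA$ and $\varepsilon>0$, fix $\varepsilon'>0$ and $N\in\N$ with $(1+\varepsilon')(1+N^{-1})\le 1+\varepsilon$, enlarge $\mathcal{F}$ to a finite set $\mathcal{G}\subseteq\cA$ with $\dim\mathrm{span}(\mathcal{G})\ge N$, and apply proper algebraic amenability of $\widetilde{\cA}$ to obtain a finite-dimensional $V\subseteq\widetilde{\cA}$ with $\mathcal{G}\subseteq V$ and $\dim(xV+V)\le(1+\varepsilon')\dim V$ for all $x\in\mathcal{G}$. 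Setting $W:=V\cap\cA$ yields $\mathcal{F}\subseteq\mathcal{G}\subseteq W$, $\dim W\ge N$, $\dim V\le\dim W+1$, and $aW+W\subseteq aV+V$ for $a\in\mathcal{F}$; combining these, $\dim(aW+W)/\dim W\le(1+\varepsilon')(1+N^{-1})\le 1+\varepsilon$, so $W$ is a F\o lner subspace of $\cA$ for $(\mathcal{F},\varepsilon)$ containing $\mathcal{F}$. (The large-dimension step can alternatively be packaged using Proposition~\ref{pro:proper-alg-amen2}.) Throughout, the only bookkeeping one must not lose sight of is which ambient algebra the products and dimensions are taken in, which the identifications above render routine.
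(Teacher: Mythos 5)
Your proof is correct and follows essentially the same route as the paper's: both directions hinge on the fact that $\cA$ has codimension one in $\widetilde{\cA}$, so passing a F\o lner subspace across the unitization costs at most one dimension, which is absorbed by making the subspaces large. The only cosmetic differences are that for the ``only if'' direction you use the intersection $V\cap\cA$ and pre-fix $N$ large (via Proposition~\ref{pro:proper-alg-amen2}) where the paper uses the image $\pi(W')$ under the coordinate projection together with a three-case dimension count, and for the ``if'' direction you adjoin $\K\1_{\widetilde{\cA}}$ to get literal containment of $\mathcal{F}$ where the paper keeps $\iota(W)$ and invokes the dimension criterion; these choices are interchangeable.
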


 \begin{proof}
  Let $\pi \colon \cA \oplus \mathbb{K} \to \cA$ be the projection onto the first coordinate and $\iota \colon \cA \to \cA \oplus \mathbb{K}$ be the embedding onto 
  $\cA \times \{0\}$. We also assume that $\cA$ is infinite dimensional, as otherwise there is nothing to prove. 
  
  To prove (1), we assume $\cA$ is algebraically amenable. Then for any $\varepsilon > 0$ and any finite subset $\cF \subset \widetilde{\cA}$, we pick an 
  $(\pi(\cF), \varepsilon)$-F\o lner subspace $W$ in $\cA$. Then $\iota(W) \subset \widetilde{\cA}$ is $(\cF, \varepsilon)$-F\o lner because for any 
  $(a,\lambda) \in \cF$, $(a,\lambda) \cdot \iota(W) + \iota(W) = \iota(aW + W)$. Thus $\widetilde{\cA}$ is algebraically amenable.
  
  As for (2), we first observe that the ``if'' part is proved similarly as above, except for that we also use the fact that $\dim(\iota(W)) = \dim(W)$ and apply 
  Proposition~\ref{pro:proper-alg-amen2}.
  
  Conversely, suppose $\widetilde{\cA}$ is properly algebraically amenable. For any $\varepsilon > 0$ and any finite subset $\cF' \subset \cA$, we pick an $(\iota(\cF'), \varepsilon)$-F\o lner subspace $W'$ in $\widetilde{\cA}$ such that $\iota(\cF') \subset W'$. Then for any $a \in \cF'$ and $(b, \mu) \in W'$, 
  we have $\iota(a) \cdot (b, \mu) = \iota(ab + \mu a) \in \iota(ab) + W'$, and thus
  \[
   \pi \big( \iota(a) \cdot W' + W' \big)  = a \cdot \pi(W') + \pi(W') \; .
  \]
  Since $\mathrm{Ker}(\pi) = \mathbb{K} \cdot (0,1)$, we have
  \begin{align*}
   & \frac{\dim_{\mathbb{K}}(a \cdot \pi(W') + \pi(W'))}{\dim_{\mathbb{K}}(\pi(W'))} 
   =  \frac{\dim_{\mathbb{K}}\left(\pi \big( \iota(a) \cdot W' + W' \big)\right)}{\dim_{\mathbb{K}}(\pi(W'))} \\[2mm]
   \in & \ \left\{ \frac{\dim_{\mathbb{K}}\left( \iota(a) \cdot W' + W' \right) }{\dim_{\mathbb{K}}(W')} ,  
          \frac{\dim_{\mathbb{K}}\left( \iota(a) \cdot W' + W' \right) - 1}{\dim_{\mathbb{K}}(W')} , 
          \frac{\dim_{\mathbb{K}}\left( \iota(a) \cdot W' + W' \right) -1 }{\dim_{\mathbb{K}}(W') -1 } \right\} \\[2mm]
   \subset & \ \left[1 , 1 + \frac{\dim_{\mathbb{K}}\left( \iota(a) \cdot W' + W' \right) - \dim_{\mathbb{K}}(W') }{\dim_{\mathbb{K}}(W') -1 } \right] \\[2mm]
   \subset & \ \left[1 , 1 + \varepsilon \left( 1 + \frac{1}{ | \cF' | - 1} \right) \right] \; .
  \end{align*}
  Since without loss of generality, we may assume $|\cF'| \geq 2$, thus $\pi(W')$ is $(\cF', 2 \varepsilon)$-F\o lner and contains $\cF'$. This shows that ${\cA}$ is properly algebraically amenable.
 \end{proof}

The following example exhibits the difference between algebraic amenability and proper algebraic amenability, and also demonstrate that the converse
of (1) in Proposition~\ref{pro:amenability-unitalization} is false (see also Theorem~3.2 in \cite{LledoYakubovich13} for an operator theoretic
counterpart).

\begin{example}\label{eg:alg-amen-left-ideal}
Let $\cA$ be a $\mathbb{K}$-algebra with a non-zero left ideal $I$ of finite $\mathbb K$-dimension. Then $\cA$ is always algebraically
amenable, since $I$ is an $(\cA, \varepsilon=0)$-F\o lner subspace. Therefore an easy way to construct an amenable $\mathbb{K}$-algebra that is not properly amenable is to take a direct sum of a finite dimensional algebra and a non-algebraically-amenable algebra (e.g., the group algebra of a non-amenable group; see Example~\ref{ex:group-algebra}). 
In particular, if $\cA$ is a non-amenable unital algebra, then $\widetilde{\cA} \cong \cA \oplus \mathbb{K}$ is algebraically amenable but not properly algebraically amenable.
Moreover, this is the only way in which a unitization $\widetilde{\cA}$ can be algebraically amenable but not properly algebraically amenable, as we will show in Corollary~\ref{cor:unitizationalgamen-notpaa}.
\end{example}

The next result refers to two-sided ideals. 

\begin{proposition} \label{pro:quotient-proper}
Let $\cA$ be a $\mathbb{K}$-algebra with a non-zero two-sided ideal $I$ of finite $\mathbb K$-dimension. 
Then, $\cA$ is properly algebraically amenable if and only if the quotient algebra $\cA/I$ is.
\end{proposition}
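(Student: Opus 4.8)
The plan is to exploit the fact that a finite-dimensional two-sided ideal $I$ is, in particular, a finite-dimensional left ideal, so that $\cA$ is automatically algebraically amenable by Example~\ref{eg:alg-amen-left-ideal}; the whole content of the proposition is therefore about the \emph{proper} version, and the natural tool is the quotient map $q \colon \cA \to \cA/I$, whose kernel $I$ is finite-dimensional. I will treat the two implications separately, using Proposition~\ref{pro:proper-alg-amen2} to replace the exhaustion condition by the cardinality (dimension) condition (2), which behaves well under a finite-dimensional perturbation. Note that if $\cA$ is finite-dimensional there is nothing to prove, so assume $\dim_{\K}(\cA) = \infty$, equivalently $\dim_{\K}(\cA/I) = \infty$.

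For the ``only if'' direction, suppose $\cA$ is properly algebraically amenable. Given $\varepsilon > 0$, $N \in \N$ and a finite set $\cG \subset \cA/I$, lift $\cG$ to a finite set $\cF \subset \cA$ with $q(\cF) = \cG$, and also fix a basis $\cF_0$ of $I$ (which is finite); set $\cF' := \cF \cup \cF_0$. Using Proposition~\ref{pro:proper-alg-amen2}(2) for $\cA$, choose an $(\cF', \varepsilon')$-F\o lner subspace $W \subset \cA$ of dimension at least $\max\{N + \dim_{\K}(I),\ 2\dim_{\K}(I)/\varepsilon\}$ for a suitably small $\varepsilon'$ to be fixed below. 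Because $I W \subset W$ up to the error measured by the $\cF_0$-F\o lner condition, the subspace $W + I$ satisfies $\dim_{\K}(W + I) \le \dim_{\K}(W) + \dim_{\K}(I)$ and, for each $a \in \cF$, $\dim_{\K}(a(W+I) + (W+I)) - \dim_{\K}(W+I)$ differs from $\dim_{\K}(aW + W) - \dim_{\K}(W)$ by at most $O(\dim_{\K}(I))$. Then $V := q(W + I) = q(W)$ has $\dim_{\K}(V) = \dim_{\K}(W+I) - \dim_{\K}(I) \ge N$, and for $a \in \cF$ one estimates
\[
 \frac{\dim_{\K}\bigl(q(a) V + V\bigr)}{\dim_{\K}(V)}
 = \frac{\dim_{\K}\bigl(q(a(W+I) + (W+I))\bigr)}{\dim_{\K}(W+I) - \dim_{\K}(I)}
 \le 1 + \varepsilon
\]
provided $\varepsilon'$ was chosen small enough relative to $\varepsilon$, $\dim_{\K}(I)$, and the lower bound on $\dim_{\K}(W)$; here one uses that applying $q$ can only decrease dimension and drops it by at most $\dim_{\K}(I)$. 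By Proposition~\ref{pro:proper-alg-amen2}, $\cA/I$ is properly algebraically amenable.

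For the ``if'' direction, suppose $\cA/I$ is properly algebraically amenable. Given $\varepsilon > 0$, $N \in \N$ and a finite set $\cF \subset \cA$, apply Proposition~\ref{pro:proper-alg-amen2}(2) to $\cA/I$ with the finite set $q(\cF)$ and a small $\varepsilon'$ to obtain a $(q(\cF), \varepsilon')$-F\o lner subspace $V \subset \cA/I$ with $\dim_{\K}(V)$ large. Choose a finite-dimensional subspace $W_0 \subset \cA$ with $q(W_0) = V$ and $q|_{W_0}$ injective (so $\dim_{\K}(W_0) = \dim_{\K}(V)$), and set $W := W_0 + I$, a finite-dimensional subspace with $\dim_{\K}(W) = \dim_{\K}(V) + \dim_{\K}(I)$. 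For $a \in \cF$, the space $aW + W$ surjects under $q$ onto $q(a)V + V$, and the kernel of this surjection is contained in $I$ (since $W \supset I$ and $aI \subset I$), hence $\dim_{\K}(aW + W) \le \dim_{\K}(q(a)V + V) + \dim_{\K}(I)$; combined with $\dim_{\K}(W) \ge \dim_{\K}(V)$ this gives
\[
 \frac{\dim_{\K}(aW + W)}{\dim_{\K}(W)}
 \le \frac{\dim_{\K}(q(a)V + V) + \dim_{\K}(I)}{\dim_{\K}(V) + \dim_{\K}(I)}
 \le 1 + \varepsilon
\]
once $\varepsilon'$ and $\dim_{\K}(V)$ are chosen appropriately. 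Since $\cF \subset W$ is not automatic, one instead invokes Proposition~\ref{pro:proper-alg-amen2}(2): $W$ can be taken of dimension $\ge N$ because $\dim_{\K}(V)$ can, so $\cA$ is properly algebraically amenable.

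The main obstacle I anticipate is bookkeeping the finite-dimensional error terms cleanly: I must verify that passing to $q(W)$ (or lifting $V$) changes dimensions of the relevant sums $aW + W$ only by a bounded amount depending on $\dim_{\K}(I)$, and then choose the auxiliary $\varepsilon'$ and the dimension lower bound so that the ratio stays below $1 + \varepsilon$. This is exactly the kind of ``$\delta/2$ plus additive-constant-over-large-denominator'' argument already used in the proof of Proposition~\ref{pro:proper-alg-amen2}, so I expect it to go through, but the bound $I W \subset W$ only up to the $\cF_0$-F\o lner error requires including a basis of $I$ in the test set $\cF'$ in the forward direction — missing this is the likely pitfall.
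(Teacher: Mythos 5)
Your proof is correct and follows essentially the same route as the paper: both directions push F\o lner subspaces through the quotient map $\pi\colon\cA\to\cA/I$ (taking $\pi^{-1}(V)$ in one direction and $\pi(W)$, after arranging $I\subset W$ up to a $\dim_\K(I)$ error, in the other) and invoke Proposition~\ref{pro:proper-alg-amen2} to absorb the finite-dimensional discrepancies via large-dimensional F\o lner subspaces. One minor remark: including a basis of $I$ in the test set $\cF'$ is not actually needed in the forward direction --- the only fact used there is $aI\subseteq I$, i.e., that $I$ is a right ideal --- so the pitfall you flag is not a real one.
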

\begin{proof}
Let $\pi\colon  \cA \to \cA / I$ is the natural projection, then for any $\varepsilon >0$ and any finite set
$\mathcal{F}\subset\cA$, $V \mapsto \pi^{-1}(V)$ defines a map 
from $\mathrm{\mbox{F\o l}}(\cA / I, \pi(\mathcal{F}), \varepsilon)$ to $\mathrm{\mbox{F\o l}}(\cA, \mathcal{F}, \varepsilon)$ 
with $\mathrm{dim} (\pi^{-1}(V) ) \geq \mathrm{dim} (V)$.

On the other hand for 
any $\varepsilon >0$ and any finite set $\mathcal{F'}\subset\cA / I$ such that $\mathrm{dim} (\mathrm{span} (\mathcal F ')) > 0$,  
$W \mapsto \pi(W)$ defines a map from $\mathrm{\mbox{F\o l}}^{\pi^{-1}(\mathrm{span}(\mathcal F ')) }(\cA, \pi^{-1}(\mathcal{F'}),
\frac{\varepsilon}{K} )$ to $\mathrm{\mbox{F\o l}}(\cA / I, \mathcal{F'}, \varepsilon)$ with 
$$\mathrm{dim}( \pi(W))  = \mathrm{dim}(W) - \mathrm{dim}(I)\;,$$ where 
$$K= 1 + \frac{\mathrm{dim} (\pi^{-1} (\mathrm{span}(\mathcal F ')))}{\mathrm{dim} (\pi^{-1} (\mathrm{span}(\mathcal F '))) - \mathrm{dim}(I)}\;, $$ 
and $\mathrm{\mbox{F\o l}}^{V} (\cA, \mathcal{F''}, \varepsilon ' )$ is the set of all $W$ in 
$\mathrm{\mbox{F\o l}} (\cA, \mathcal{F''}, \varepsilon' )$ 
such that $V\subseteq W$, for any finite-dimensional subspace $V$ of $\mathcal A$. Indeed, 
for $W$ in $\mathrm{\mbox{F\o l}}^{\pi^{-1}(\mathrm{span}(\mathcal F ')) }(\cA, \pi^{-1}(\mathcal{F'}),
\frac{\varepsilon}{K} )$, we have
\begin{align*}
  & \frac{\mathrm{dim} (\pi (aW+W))}{\mathrm{dim} (\pi(W))}  = \frac{\mathrm{dim} (aW+W) -\mathrm{dim} (I)}{ \textrm{dim} (W) - \textrm{dim}(I)} \\
 & = \frac{\text{dim} (aW+W)}{\textrm{dim}(W)} \cdot \frac{\textrm{dim} (W)}{\textrm{dim} (W) - \textrm{dim} (I)} + \Big( 1 - \frac{\textrm{dim} (W)}{\textrm{dim} (W) - \textrm{dim} (I)} \Big) \\
 & = \Big( \frac{\textrm{dim} (aW+W)}{\textrm{dim} (W)} -1  \Big) \Big( \frac{\textrm{dim} (W)}{\textrm{dim} (W)- \textrm{dim} (I)} \Big)  +1 \\
&  \le \frac{\varepsilon}{K} \Big( \frac{\textrm{dim}(W)}{\textrm{dim} (W)- \textrm{dim} (I)} \Big) +1 ,  
 \end{align*}
and it is easily seen that $$\frac{1}{K}\Big( \frac{\textrm{dim}(W)}{\textrm{dim} (W)- \textrm{dim} (I)} \Big)= 
\Big( \frac{\mathrm{dim} (\pi^{-1} (\mathrm{span}(\mathcal F ')))- \mathrm{dim}(I)}{ 2 \mathrm{dim} (\pi^{-1} (\mathrm{span}(\mathcal F '))) - \mathrm{dim}(I)} \Big)
\Big( \frac{\textrm{dim}(W)}{\textrm{dim} (W)- \textrm{dim} (I)}\Big)  \le 1, $$ giving the result.
\end{proof}

Next we show that the only situation where algebraic amenability and proper algebraic amen\-ability differ is when the $\mathbb K$-algebra contains a non-zero
left ideal of finite $\mathbb K$-dimension, as demonstrated by the following theorem. This situation is similar to what is known for 
Hilbert space operators (cf., \cite[Theorem~4.1]{LledoYakubovich13}).
  
\begin{theorem}\label{theorem:when-automatically-proper-alg-amen}
 Let $\cA$ be an infinite dimensional $\mathbb{K}$-algebra that is algebraically amenable but not properly algebraically amenable. Then there exists
a nonzero element $a \in \cA$ with 
\[ 
\dim(\cA \cdot a) < \infty\;.
\]
\end{theorem}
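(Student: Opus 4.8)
The idea is to mimic the strategy used for extended metric spaces in Proposition~\ref{prop:prepare-for-amennotpamen} and Corollary~\ref{cor:characamenable-nprpamen}: since $\cA$ is algebraically amenable but not properly so, Proposition~\ref{pro:proper-alg-amen2} tells us that the dimensions of available F\o lner subspaces are bounded, and we want to extract from this an ``isolated finite-dimensional substructure'' in the form of a finite-dimensional left ideal. Concretely, I would proceed as follows.

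\textbf{Step 1 (extract a maximal F\o lner subspace).} By Proposition~\ref{pro:proper-alg-amen2}, failure of proper algebraic amenability means there are $\varepsilon_0>0$, a finite set $\cF_0\subset\cA$, and $N_0\in\N$ such that every $(\cF_0,\varepsilon_0)$-F\o lner subspace has dimension at most $N_0$. Since $\cA$ is nevertheless algebraically amenable, the set $\mathrm{F\ddot ol}(\cA,\cF_0,\varepsilon_0)$ is non-empty; among its members (all of dimension $\le N_0$) choose one, $W_0$, of \emph{maximal} dimension. Without loss of generality enlarge $\cF_0$ so that it spans $W_0$ and contains $\1$ if $\cA$ is unital (it need not be); the point is simply to have a concrete finite-dimensional $W_0$ that is ``as large as possible'' among $(\cF_0,\varepsilon_0)$-F\o lner subspaces.

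\textbf{Step 2 (produce many more F\o lner subspaces and compare).} Fix any $a\in\cA$. Using algebraic amenability with the finite set $\cF_0\cup\{a\}$ and an $\varepsilon$ much smaller than $\varepsilon_0$, and also much smaller than $1/\dim(W_0+aW_0+\cdots)$ for whatever finitely many auxiliary spaces we need, produce an $(\cF_0\cup\{a\},\varepsilon)$-F\o lner subspace $V$. Necessarily $\dim V\le N_0$, and moreover $V\in\mathrm{F\ddot ol}(\cA,\cF_0,\varepsilon_0)$, so by maximality $\dim V\le\dim W_0$. The strategy is now: try to build from $W_0$ and $V$ a strictly larger $(\cF_0,\varepsilon_0)$-F\o lner subspace — a contradiction — unless $V$ is already ``contained up to a small error'' in $W_0$ in a way that forces $aW_0\subseteq W_0$ up to finite-dimensional corrections. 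This is the linearized analogue of the case analysis in the proof of Proposition~\ref{prop:prepare-for-amennotpamen}: the space $\widetilde W:=W_0+V$ has dimension $\ge\dim W_0$, and if the inequality is strict one checks, using $\dim(b\widetilde W+\widetilde W)\le\dim(bW_0+W_0)+\dim(bV+V)$ and the near-invariance of both $W_0$ and $V$, that $\widetilde W$ is again $(\cF_0,\varepsilon_0)$-F\o lner with $\dim\widetilde W>N_0$, contradicting maximality. Hence $V\subseteq W_0$, for \emph{every} sufficiently fine $V$ as above.

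\textbf{Step 3 (identify the left ideal).} From Step 2 one gets, for each $a\in\cA$, that $W_0$ ``absorbs'' arbitrarily fine F\o lner approximations; pushing this through, $aW_0\subseteq W_0$ modulo a subspace of dimension $<\varepsilon_0\dim W_0$ that does not grow. The cleanest way to finish is: let $W$ be the intersection (over the directed family of such $V$'s, or equivalently a suitable minimal member) of the relevant subspaces, and show $aW\subseteq W_0$ for all $a$, whence $\cA W\subseteq W_0$, so $\cA W$ is finite-dimensional. If $W\ne 0$, pick $0\ne a\in W$; then $\cA\cdot a\subseteq\cA W$ is finite-dimensional, giving the conclusion. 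One must rule out $W=0$: but $W=0$ would mean the F\o lner subspaces $V$ shrink to $0$, impossible since they all contain $\mathrm{span}(\cF_0)\ne 0$ (or one re-runs Step 2 with $\cF_0$ forced into every $V$). Actually the slickest route is: the maximal $W_0$ itself must satisfy $\dim(aW_0+W_0)=\dim W_0$, i.e. $aW_0\subseteq W_0$, for all $a$ — if not, amenability lets us find $V\supseteq$ a generating set with $\dim(aV+V)/\dim V<1+\varepsilon_0$ and, combining with $W_0$ as in Step 2, exceed $N_0$. Then $W_0$ is a finite-dimensional left ideal, and any nonzero $a\in W_0$ satisfies $\dim(\cA\cdot a)\le\dim W_0<\infty$.

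\textbf{Main obstacle.} The delicate point is Step 2: making precise the ``combining $W_0$ with $V$'' argument so that the resulting subspace is genuinely $(\cF_0,\varepsilon_0)$-F\o lner of dimension $>N_0$. In the metric-space proof this is handled by a careful split into coarse connected components and several cases; here, because the lattice of subspaces is not distributive, the inclusion–exclusion identity $|Z_+|+|Z_-|=|Z_+\cup Z_-|+|Z_+\cap Z_-|$ available for sets must be replaced by the inequality $\dim(U+U')+\dim(U\cap U')\le\dim U+\dim U'$ (with equality for subspaces, actually — so this is fine), but the real subtlety is that ``$V\not\subseteq W_0$'' does not immediately yield a clean decomposition the way $F_{j_0}\not\subseteq F_0$ did for sets. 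I expect to need the sub-additivity $\dim(aW+W)-\dim W\le\dim(aW'+W')-\dim W'+ (\text{correction})$ when $W\subseteq W'$, together with the choice $\varepsilon\ll 1/N_0$ to absorb all finite-dimensional errors, and to argue that if the maximal $W_0$ fails to be left-invariant then enlarging it along a well-chosen $V$ keeps it $(\cF_0,\varepsilon_0)$-F\o lner — contradicting maximality. Getting these dimension bookkeeping inequalities to line up is the crux.
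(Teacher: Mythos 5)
Your overall strategy \textemdash~ use the failure of proper amenability to bound the dimensions of F\o lner subspaces, take a maximal one, and promote it to a finite-dimensional left ideal \textemdash~ is the same as the paper's, but two steps have genuine gaps. The first is exactly the ``bookkeeping'' you flag as the crux of Step~2, and it does not resolve at the level of generality you set up: if $W_0$ and $V$ are only approximately invariant, the best one gets for $b\in\cF_0$ is $\dim\bigl(b(W_0+V)+(W_0+V)\bigr)\le \dim(bW_0+W_0)+\dim(bV+V)-\dim(W_0\cap V)\le \dim(W_0+V)+\varepsilon_0\dim W_0+\varepsilon\dim V$, so $W_0+V$ is only $(\cF_0,\varepsilon_0+\varepsilon)$-F\o lner, which does not contradict maximality at level $\varepsilon_0$. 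The paper's way out is a normalization you omit: since $N_{\cF,\varepsilon}$ is increasing in $\varepsilon$, one may shrink $\varepsilon_0$ so that $\varepsilon_0\cdot N_{\cF_0,\varepsilon_0}<1$; then $\dim(aW+W)\le\dim W+\varepsilon_0 N_{\cF_0,\varepsilon_0}<\dim W+1$ forces $\dim(aW+W)=\dim W$, so every $(\cF,\varepsilon)$-F\o lner subspace with $\cF\supseteq\cF_0$ and $\varepsilon\le\varepsilon_0$ is \emph{exactly} $\cF$-invariant. After this quantization, sums of invariant subspaces are invariant and your maximality argument runs with no error terms.

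The second, more serious gap is in Step~3: you never produce a single nonzero subspace invariant under all of $\cA$. The maximality argument only yields $V\subseteq W_0$ for the finer F\o lner subspaces $V$; it does \emph{not} yield $aW_0\subseteq W_0$, because a failure of the latter does not manufacture a $V\not\subseteq W_0$, so your ``slickest route'' is a non sequitur. Your fallback \textemdash~ intersect the $V$'s and rule out a zero intersection because ``they all contain $\mathrm{span}(\cF_0)$'' \textemdash~ assumes precisely what the failure of proper amenability forbids: you cannot force a prescribed finite set into a F\o lner subspace here. The paper's device is different: for each finite $\cF\supseteq\cF_0$, the exactly $\cF$-invariant finite-dimensional subspaces are closed under sums, hence have a unique maximal element $W_\cF$, which is nonzero because algebraic amenability makes $\mathrm{\mbox{F\o l}}(\cF,\varepsilon_0)=\mathrm{\mbox{F\o l}}(\cF,0)$ nonempty; the $W_\cF$ form a \emph{decreasing} net in $\cF$ whose integer dimensions (bounded below by $1$) stabilize at some $W_{\cF_1}$, and stabilization means $W_{\cF_1\cup\{a\}}=W_{\cF_1}$ for every $a\in\cA$, i.e.\ $W_{\cF_1}$ is a nonzero finite-dimensional left ideal. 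This decreasing net of canonical maximal elements, rather than an intersection of arbitrary F\o lner subspaces, is the missing ingredient.
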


\begin{proof}
Since the algebra $\cA$ is fixed we will denote for simplicity 
the collection $\mathrm{\mbox{F\o l}}(\cA,\mathcal{F},\varepsilon)$ of F\o lner $(\cF,\varepsilon)$-subspaces of $\cA$
by $\mathrm{\mbox{F\o l}}(\mathcal{F},\varepsilon)$.
Since $\cA$ is algebraically amenable, we know that for any 
$\varepsilon > 0$ and any finite set $\mathcal{F} \subset \cA$ the collection
$\mathrm{\mbox{F\o l}}(\mathcal{F}, \varepsilon) \not= \varnothing$. 
Hence we may define 
 \[
  N_{\mathcal{F}, \varepsilon} := \sup \{ \dim(W) \ | \ W \in \mathrm{\mbox{F\o l}}(\mathcal{F}, \varepsilon) \} \in \N \cup \{\infty\} \;.
 \]
On the other hand, as $\cA$ is not properly algebraically amenable, by condition (2) of Proposition~\ref{pro:proper-alg-amen2}, there exist
$\varepsilon_0 > 0$ and finite set $\mathcal{F}_0 \subset \cA$ such that $N_{\mathcal{F}_0, \varepsilon_0} < \infty$. Since $N_{\mathcal{F},
\varepsilon}$ is increasing with respect to $\varepsilon$, without loss of generality we may assume that $\varepsilon_0 \cdot N_{\mathcal{F}_0,
\varepsilon_0} < 1$. 
 
 For any $\varepsilon \in (0, \varepsilon_0]$ and finite set $\mathcal{F} \subset \cA$ containing $\mathcal{F}_0$, we claim that 
 \[
  \mathrm{\mbox{F\o l}}( \mathcal{F}, \varepsilon) = \mathrm{\mbox{F\o l}}(\mathcal{F}, 0) \;.
 \]
 Indeed, the inclusion $\supseteq$ is clear. On the other hand, for any 
 $W \in \mathrm{\mbox{F\o l}}( \mathcal{F}, \varepsilon)$ and $a \in \mathcal{F}$, we have 
 \begin{eqnarray*}
  \dim(a W + W) &\leq & (1 + \varepsilon)  \dim(W) \;\leq\; \dim(W) + \varepsilon  N_{\mathcal{F}, \varepsilon} \\
                &\leq & \dim(W) + \varepsilon_0 \cdot N_{\mathcal{F}_0, \varepsilon_0} \;<\; \dim(W) + 1 \;.
 \end{eqnarray*}
 Since $\dim(a W + W) \geq \dim(W)$ and from the fact that dimensions are in $\N_0$ we conclude that $\dim(aW + W) = \dim(W)$. 
 
 Observe that a non-zero finite-dimensional linear subspace $W$ of $\cA$ is $(\mathcal{F}, 0)$-F\o lner iff $\mathcal{F} \cdot W \subset W$.  
 For any finite set $\mathcal{F} \subset \cA$ 
 containing $\mathcal{F}_0$, since by what we have shown, $\{ \dim(W) \ | \ W \in \mathrm{\mbox{F\o l}}( \mathcal{F}, 0) \}$ is a non-empty
finite subset of $\N$, we have 
 \[
  \mbox{F\o l}_{\mathrm{max}}(\mathcal{F},0) := \{ W \in \mathrm{\mbox{F\o l}}( \mathcal{F}, 0) \ |\ \dim(W) \geq \dim(W'), \ 
  \forall W' \in \mathrm{\mbox{F\o l}}( \mathcal{F}, 0) \}
 \]
 is not empty. Furthermore for any finite set $\mathcal{F}' \subset \cA$ containing $\mathcal{F}$, and for any 
 $W \in \mbox{F\o l}_{\mathrm{max}}(\mathcal{F},0)$ and $W' \in \mbox{F\o l}_{\mathrm{max}}(\mathcal{F}',0)$, 
 we claim that $W' \subseteq W$. Indeed, if this were not the case, then $W + W'$ would be a member of $\mathrm{\mbox{F\o l}}( \mathcal{F}, 0)$ 
with dimension strictly greater than $\dim(W)$, contradicting the definition of $\mbox{F\o l}_{\mathrm{max}}(\mathcal{F},0)$. 
Notice that by setting $\mathcal{F}' = \mathcal{F}$, this claim implies that $ \mbox{F\o l}_{\mathrm{max}}(\mathcal{F},0) $ contains only one element,
which we now denote as $W_\mathcal{F}$. 
 
Consider the decreasing net $\{ \dim (W_\mathcal{F}) \}_{\mathcal{F} \in \mathcal{J}} $ indexed by 
\[ 
 \mathcal{J} := \{\mathcal{F} \subset \cA \ : |\mathcal{F}| < \infty, \ \mathcal{F}_0 \subset \mathcal{F} \}\;.
\]
 Since its range is contained in the finite set $\Z \cap [1, \dim(W_{\mathcal{F}_0})]$, we see that 
 $\displaystyle \lim_{\mathcal{F} \in \mathcal{J}} \dim(W_\mathcal{F})$ exists and is realized by 
 some member $W_{\mathcal{F}_1}$. It follows that $W_\mathcal{F} = W_{\mathcal{F}_1}$ for any finite $\mathcal{F} \subset \cA$ containing
$\mathcal{F}_1$, and thus 
 $a \cdot W_{\mathcal{F}_1} \subseteq W_{\mathcal{F}_1}$ 
 for any $a \in \cA$, i.e., $W_{\mathcal{F}_1}$ is a non-zero left ideal with finite $\mathbb {K}$-dimension. Consequently, if we pick any $a \in
W_{\mathcal{F}_1}$, then 
\[
\dim(\cA \cdot a) \le \dim(W_{\mathcal{F}_1}) < \infty 
\]
and the proof is concluded.
\end{proof}

\begin{corollary}\label{cor:non-zero}
 Let $\cA$ be a $\mathbb K$-algebra without zero-divisor, then $\cA$ is algebraically amenable if and only if it is properly algebraically amenable.
\end{corollary}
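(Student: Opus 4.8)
The plan is to derive Corollary~\ref{cor:non-zero} directly from Theorem~\ref{theorem:when-automatically-proper-alg-amen}. Since proper algebraic amenability trivially implies algebraic amenability, only the forward implication requires argument. So I would assume $\cA$ is algebraically amenable and suppose, towards a contradiction, that $\cA$ is \emph{not} properly algebraically amenable.

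First I would dispose of the finite-dimensional case: if $\dim(\cA) < \infty$ then $\cA$ is automatically properly algebraically amenable by taking $W = \cA$, so there is nothing to prove. Hence we may assume $\cA$ is infinite dimensional, which puts us in the hypothesis of Theorem~\ref{theorem:when-automatically-proper-alg-amen}. That theorem then supplies a nonzero element $a \in \cA$ with $\dim(\cA \cdot a) < \infty$.

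The key step is to exploit the absence of zero-divisors to rule this out. The left-multiplication map $L_a \colon \cA \to \cA \cdot a$, $x \mapsto xa$, is surjective by definition, and it is injective precisely because $\cA$ has no zero-divisors: if $xa = 0$ with $a \neq 0$, then $x = 0$. Hence $L_a$ is a linear isomorphism of vector spaces, giving $\dim(\cA) = \dim(\cA \cdot a) < \infty$, contradicting the assumption that $\cA$ is infinite dimensional.

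There is no real obstacle here; the only points worth care are the preliminary reduction to the infinite-dimensional case (so that Theorem~\ref{theorem:when-automatically-proper-alg-amen} applies) and the observation that ``no zero-divisors'' is exactly what makes $L_a$ injective. I should also note in passing that since $\cA$ need not be unital, one cannot argue via an identity element; the isomorphism $L_a$ is the clean substitute. This completes the proof, and one could add the remark that the same argument shows left and right algebraic amenability both coincide with their proper versions in this setting.
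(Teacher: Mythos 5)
Your proof is correct and follows essentially the same route as the paper: both reduce to the infinite-dimensional case and then contradict the conclusion of Theorem~\ref{theorem:when-automatically-proper-alg-amen} by observing that, absent zero-divisors, the map $x \mapsto xa$ is injective, so $\cA \cdot a$ cannot be finite dimensional (the paper phrases this via $\dim(\mathrm{span}(\cF)\,a) = \dim(\mathrm{span}(\cF))$ for arbitrary finite $\cF$, you via the linear isomorphism onto $\cA \cdot a$). Only a cosmetic remark: the map $x \mapsto xa$ is right multiplication by $a$, not left multiplication, though this does not affect the argument.
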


\begin{proof}
We only need to prove the case when $\cA$ is infinite-dimensional. Since $\cA$ has no zero-divisor, for any non-zero $a \in \cA$ and finite subset 
$\cF \subset \cA$, we have $$\dim(\mathrm{span}(\cF ) \, a) = \dim(\mathrm{span}(\cF))\;.$$ 
This clearly contradicts the conclusion of 
Theorem~\ref{theorem:when-automatically-proper-alg-amen}, and thus its hypothesis cannot hold.
\end{proof}

\begin{corollary}
 \label{cor:unitizationalgamen-notpaa}
 Suppose that $\cA$ is a non-algebraically amenable algebra such that its unitization $\widetilde{\cA}$ is algebraically amenable.
 Then $\cA$ is a unital algebra. 
 \end{corollary}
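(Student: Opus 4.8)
The plan is to combine Theorem~\ref{theorem:when-automatically-proper-alg-amen} with the elementary observation in Example~\ref{eg:alg-amen-left-ideal} that a nonzero finite-dimensional left ideal already witnesses algebraic amenability. First I would dispense with the easy reductions. Since a finite-dimensional algebra is properly algebraically amenable (take $W=\cA$), the hypothesis forces $\cA$, and hence $\widetilde{\cA}$, to be infinite dimensional; and since proper algebraic amenability implies algebraic amenability, $\cA$ is not properly algebraically amenable, so by Proposition~\ref{pro:amenability-unitalization}(2) neither is $\widetilde{\cA}$. Thus $\widetilde{\cA}$ is infinite dimensional, algebraically amenable, but not properly algebraically amenable, and Theorem~\ref{theorem:when-automatically-proper-alg-amen} yields a nonzero $a\in\widetilde{\cA}$ with $\dim(\widetilde{\cA}\cdot a)<\infty$. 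As $\widetilde{\cA}$ is unital, $L:=\widetilde{\cA}\cdot a$ is then a nonzero finite-dimensional left ideal of $\widetilde{\cA}$.

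The second step is to locate $L$ relative to the codimension-one two-sided ideal $\iota(\cA)\triangleleft\widetilde{\cA}$. The intersection $L\cap\iota(\cA)$ is a left ideal of $\widetilde{\cA}$ contained in $\iota(\cA)$, hence (via the isomorphism $\iota\colon\cA\to\iota(\cA)$) corresponds to a finite-dimensional left ideal of $\cA$; if it were nonzero, Example~\ref{eg:alg-amen-left-ideal} would contradict the non-amenability of $\cA$. So $L\cap\iota(\cA)=0$, and since $\iota(\cA)$ has codimension one and $L\neq 0$ this forces $\dim L=1$, i.e.\ $L=\mathbb{K}a$ with $a\notin\iota(\cA)$. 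Because $\iota(\cA)$ is a two-sided ideal and $L$ a left ideal, $\iota(\cA)\cdot a\subseteq\iota(\cA)\cap L=0$. Writing $a=\iota(v)+\lambda\1_{\widetilde{\cA}}$ with $v\in\cA$ and necessarily $\lambda\neq 0$, after rescaling $a$ we may take $\lambda=1$; then for every $y\in\cA$ the relation $0=\iota(y)\cdot a=\iota(yv+y)$ gives $yv=-y$, so $u:=-v$ is a \emph{right} unit of $\cA$.

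The last step upgrades this right unit to a two-sided unit, again through Example~\ref{eg:alg-amen-left-ideal}. Put $N:=\{n\in\cA:un=0\}$. For $y\in\cA$ and $n\in N$ one has $yn=(yu)n=y(un)=0$, so $\cA\cdot N=0$; in particular any nonzero $n\in N$ spans a one-dimensional left ideal of $\cA$, which would again force $\cA$ to be algebraically amenable. Hence $N=0$. Since $u^2=u$ (as $u$ is a right unit), $u(x-ux)=ux-u^2x=0$ for every $x\in\cA$, so $x-ux\in N=0$, i.e.\ $ux=x$; thus $u$ is a two-sided unit and $\cA$ is unital.

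I do not anticipate a genuine obstacle, but the one delicate point is the dichotomy in the second step: the finite-dimensional left ideal $L$ of $\widetilde{\cA}$ cannot lie inside $\iota(\cA)$ — this is exactly where the non-amenability of $\cA$ enters — so instead it must be one-dimensional and transversal to $\iota(\cA)$, and it is precisely from this transversality that the right-unit relation is read off. The promotion of a right unit to a two-sided unit at the end is the small extra twist, and it too rests on Example~\ref{eg:alg-amen-left-ideal}.
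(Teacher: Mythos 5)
Your proposal is correct and follows essentially the same route as the paper: deduce via Proposition~\ref{pro:amenability-unitalization}(2) and Theorem~\ref{theorem:when-automatically-proper-alg-amen} that $\widetilde{\cA}$ has a nonzero finite-dimensional left ideal, use the non-amenability of $\cA$ to force it to be one-dimensional and transversal to $\iota(\cA)$, read off a right unit, and then use the non-amenability once more to upgrade it to a two-sided unit. The only cosmetic difference is in the last step, where you work with the kernel $N=\{n: un=0\}$ while the paper works with the subspace $(1-e)\cA$; the two arguments are equivalent.
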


\begin{proof}
 By Proposition~\ref{pro:amenability-unitalization} (2), $\widetilde{\cA}$ is not properly algebraically amenable and so, by 
Theorem~\ref{theorem:when-automatically-proper-alg-amen},
 $\widetilde{\cA}$ contains a nonzero finite-dimensional left ideal $I$. Since $\cA$ is not algebraically amenable, we must have $I\cap \cA = \{0\}$, and 
 it follows that $I$ is one-dimensional and that $I\oplus \cA=  \widetilde{\cA }$. Let $(b,1)\in \widetilde{\cA}$, where $b\in \cA$. Then $(a,0)(b,1) \in I$ implies
 that $a(-b)= a$ for all $a\in \cA$, so that $e:= -b$ is a right unit for $\cA$. In particular, $e$ is idempotent and $\cA= \cA e$. If 
 $$(1-e) \cA  = \{ a-ea : a\in \cA \}$$
 is nonzero, then any nonzero finite-dimensional 
 linear subspace of $(1-e) \cA $ is an $(\mathcal F , 0)$-F\o lner subspace for every finite subset $\mathcal F$ of $\cA$, and so $\cA$ is algebraically amenable, contradicting our assumption. 
 Therefore $(1-e)\cA = 0$  and $\cA$ is unital with unit $e$. 
\end{proof}

\begin{example} {\rm (\cite[Corollary 4.5]{Bart})}\label{ex:group-algebra}
The group algebra $\mathbb{K}G$ is algebraically amenable if and only if it is properly algebraically amenable if and only if 
$G$ is amenable.
\end{example}

\section{Paradoxical decompositions and invariant dimension measures of $\mathbb{K}$-algebras} \label{sec:dicho-alg-amenable}

Elek showed that, analogous to the situation for groups, there is a dichotomy between algebraic amenability and a certain kind of 
paradoxical decomposition defined for algebras (cf., \cite[Theorem 2]{Elek03}). However, in his paper, the conditions of countable 
dimensionality and the non-existence of zero-divisors are required. 

We remark here that these conditions can be removed if one replaces Elek's definition (corresponding to \emph{proper} 
algebraic amenability as in Definition~\ref{def:alg-amenable2}~(ii)) with 
algebraic amenability as in Definition~\ref{def:alg-amenable2}~(i). By 
Theorem~\ref{theorem:when-automatically-proper-alg-amen}
the assumption of no zero-divisors happens to have the effect that the properness for algebraic amenability comes for free. 
We will state and prove this general version of Elek's theorem below.

We recall some definitions, adapted to our needs. When working with a zero-divisor $r$, it is useful to restrict 
attention to subspaces $A$ where $r$ acts non-degenerately. More precisely, if $A$ is a linear subspace of $\mathcal A$, 
we say that $r|_A$ is injective if the map $a\mapsto ra$ given by left multiplication by $r$ is injective on $A$. 
Equivalently, $A\cap \mathrm{r.ann}(r) = \{ 0 \}$, where
$$\mathrm{r.ann} (r) = \{ x\in \mathcal A : rx=0 \}$$
is the right annihilator of $r$.

The following definition of paradoxicality is equivalent to the one given by Elek in \cite{Elek03}. 
We prefer this formulation because it is formally closer to the usual condition for actions of groups, 
(cf., \cite[Definition 1.1]{Wagon94}).

\begin{definition}
 \label{def:paradoxicality}
Let $\cA$ be a $\mathbb K$-algebra. Let $\{ e_i \}_{i \in I}$ be a basis of $\cA$ over $\mathbb K$ and $\mathcal{S}$ a subset of $\cA$. A {\it paradoxical decomposition} of $\{ e_i \}_{i \in I}$ by $\mathcal{S}$ consists of two partitions $(L_0, L_1, \ldots, L_n)$ and $(R_0, R_1, \ldots, R_m)$ of $\{ e_i \}_{i \in I}$, i.e.
$$\{ e_i \}_{i \in I} = L_0\sqcup L_1\sqcup \ldots \sqcup L_n = R_0\sqcup R_1 \sqcup \ldots \sqcup R_m \; ,$$
together with elements $g_1,\ldots ,g_n, h_1,\ldots , h_m \in \mathcal{S}$, such that
$$L_0 \cup g_1L_1 \cup \ldots \cup g_nL_n \cup R_0 \cup h_1R_1 \cup \ldots \cup h_mR_m$$
is a disjoint union and linearly independent family in $\cA$.

If such a paradoxical decomposition exists, we say $\{ e_i \}_{i \in I}$ is \emph{paradoxically decomposed} by $\mathcal{S}$.
\end{definition}

Note that, in particular, $g_i|_{A_i}$ and $h_j|_{B_j}$ are injective, where
$A_i$ is the linear span of $L_i$ and $B_j$ is the linear span of $R_j$.

\begin{remark}\label{rem:equiv-Elek-def}
\begin{enumerate}
 \item[(i)] The slight formal inhomogeneity with $L_0$ and $R_0$ can be fixed by adding the unit $\1_\cA$ into $\mathcal{S}$, when $\cA$ is unital. 
 This way, we may write $L_0$ as $\1_\cA L_0$, and $R_0$ as $\1_\cA R_0$. When $\cA$ is not unital, we can still fix it by considering $\mathcal{S}$ 
 as a subset of $\widetilde{\cA}$ and adding $\1_{\widetilde{\cA}}$ into it.
 \item[(ii)] Following \cite[Definition~1.2]{Elek03}, we may also present a variant of the above definition involving only one partition. Namely, we define a \emph{one-partition paradoxical decomposition} of $\{ e_i \}_{i \in I}$ by $\mathcal{S}$ so that it consists of a partition
$\{ e_i \} _{i \in I} = T_1 \sqcup \ldots \sqcup T_k $
and elements $g_1,\ldots ,g_k, h_1,\ldots , h_k \in \mathcal{S}$ with the property that 
\[
 g_1T_1 \cup  \ldots \cup g_k T_k \cup h_1T_1 \cup  \ldots \cup  h_kT_k
\]
is a disjoint union and linearly independent family in $\cA$. Though this is seemingly a more restrictive notion, the existence of this one-partition version is equivalent to that of a general paradoxical decomposition, provided that $\mathcal{S}$ contains the unit (of $\cA$ or $\widetilde{\cA}$). Indeed, starting from a general paradoxical decomposition 
\[
\big((L_0, \ldots, L_n), (R_0, \ldots, R_m), (g_1,\ldots ,g_n), (h_1,\ldots , h_m) \big) \; ,
\]
we may define a one-partition paradoxical decomposition by setting $T_{ij}: = L_i\cap R_j$, $g_{ij}:= g_i$, and $h_{ij}:= h_j$ for $i=0,\ldots , n$ and $j=0,\ldots , m$, with the understanding that $g_0 = h_0 = \1_\cA$ or $\1_{\widetilde{\cA}}$. 
 \item[(iii)] The relation to Elek's definition in \cite{Elek03} is thus as follows: a unital countably dimensional algebra is \emph{paradoxical} in the sense of \cite[Definition~1.2]{Elek03} if and only if for any (countable) basis $\{ e_i \}_{i \in I}$ of $\cA$, there is a paradoxical decomposition of $\{ e_i \}_{i \in I}$ by $\cA$.  
\end{enumerate}
 \end{remark}

The following lemma generalizes \cite[Lemma~2.2]{Elek03}.

\begin{lemma}
 \label{lem:local-doubling}
 Fix $\lambda > 1$. Then a $\mathbb K$-algebra $\cA$ is not algebraically amenable if and only if there exists a finite subset $\cF \subset \cA$, such that for any nonzero finite dimensional linear subspace $W \subset \cA$, we have 
 \[
   \frac{\dim(\cF W +W)}{\dim(W)} > \lambda \;.
 \]
\end{lemma}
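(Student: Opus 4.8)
The plan is to prove both directions by a ``amplification'' argument that upgrades a small multiplicative gain over $1$ into an arbitrarily large one by iterating. The ``if'' direction is immediate: if such an $\cF$ exists with the stated property for the fixed $\lambda > 1$, then in particular every nonzero finite dimensional $W$ satisfies $\dim(\cF W + W)/\dim(W) > \lambda > 1 + \varepsilon$ for $\varepsilon := \lambda - 1 > 0$, so no $(\cF, \varepsilon)$-F\o lner subspace exists, and hence $\cA$ is not algebraically amenable by Definition~\ref{def:alg-amenable2}(ii) (using the reformulation from Remark~\ref{rmk:charac-alg-amen2} to pass between the ``$aW+W$ for all $a \in \cF$'' form and the ``$\cF W + W$'' form, at the cost of replacing $\varepsilon$ by $|\cF|\varepsilon$ or vice versa — this only affects constants, not the existence).

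For the ``only if'' direction, suppose $\cA$ is not algebraically amenable. Then there is some $\varepsilon_0 > 0$ and some finite $\cF_0 \subset \cA$ admitting no $(\cF_0, \varepsilon_0)$-F\o lner subspace; equivalently (again via Remark~\ref{rmk:charac-alg-amen2}), setting $\mu_0 := 1 + \varepsilon_0 > 1$, every nonzero finite dimensional $W$ satisfies $\dim(\cF_0 W + W)/\dim(W) > \mu_0$. (If one works with $\cF_0$ instead, one may freely enlarge it to be symmetric / contain enough elements; the monotonicity of $\mathrm{\mbox{F\o l}}(\cA, \cF, \varepsilon)$ in $\cF$ noted in Remark~\ref{rmk:charac-alg-amen2} makes this harmless.) The key observation is that the map $W \mapsto \cF_0 W + W$ can be iterated: if $\cF := \underbrace{\cF_0 \cdots \cF_0}_{n} + \underbrace{\cF_0 \cdots \cF_0}_{n-1} + \cdots + \cF_0 + \{1\}$ denotes (the finite set of all products of at most $n$ elements of $\cF_0$, together with a unit — taken in $\widetilde{\cA}$ if $\cA$ is non-unital, then intersected back appropriately), then $\cF W + W \supseteq \cF_0(\cF_0(\cdots(\cF_0 W + W)\cdots) + W) + W$, the $n$-fold iterate, and hence
\[
  \frac{\dim(\cF W + W)}{\dim(W)} \geq \prod_{k=1}^{n} \frac{\dim(\cF_0 W_k + W_k)}{\dim(W_k)} > \mu_0^{\,n},
\]
where $W_1 := W$ and $W_{k+1} := \cF_0 W_k + W_k$, each of which is again a nonzero finite dimensional subspace so that the hypothesis applies to it. Choosing $n$ large enough that $\mu_0^{\,n} > \lambda$ yields the required finite set $\cF$.

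The main technical point to handle carefully — and the step I expect to be the principal obstacle — is the bookkeeping around the unit and around the telescoping inequality, in two respects. First, when $\cA$ is non-unital one must work in $\widetilde{\cA}$ to make sense of ``$\{1\}$'' in $\cF$ so that $W \subseteq \cF W + W$ genuinely holds; one then checks that $\widetilde{\cA}$ is not algebraically amenable either, which follows from Proposition~\ref{pro:amenability-unitalization}(1) read contrapositively, so the whole argument can be run in $\widetilde{\cA}$ and the resulting $\cF$ adjusted (note $\cF$ is only required to be a finite subset of $\cA$, so one absorbs the unit by writing $\cF_0$ itself to contain a ``near-unit'' or simply by noting $\cF W + W$ already contains $W$ without needing $1 \in \cF$). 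Second, the chain of inequalities $\dim(\cF_0 W_k + W_k) \geq \mu_0 \dim(W_k)$ combined with $\dim(W_{k+1}) = \dim(\cF_0 W_k + W_k)$ telescopes cleanly only because each intermediate $W_k$ is itself a legitimate test subspace for the non-amenability hypothesis — this is automatic since sums and products of finite dimensional subspaces are finite dimensional and nonzero (the product $\cF_0 W_k$ could in principle be $\{0\}$ if $\cF_0 \subset \mathrm{l.ann}$, but since $W_k \subseteq W_{k+1}$ the sum $\cF_0 W_k + W_k$ is nonzero regardless). Once these points are settled, the estimate $\mu_0^{\,n} > \lambda$ for $n > \log \lambda / \log \mu_0$ closes the argument.
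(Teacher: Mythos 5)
Your proposal is correct and follows essentially the same route as the paper: the paper also takes $\cF^{(n)}$ to be the set of all products of at most $n$ elements of $\cF_0$ and proves $\dim(\cF^{(n)}W+W) > (1+\varepsilon)^n\dim(W)$ by induction, which is exactly your telescoping product over the iterates $W_{k+1}=\cF_0 W_k+W_k$. Your worries about the unit are unnecessary for the reason you yourself note ($\cF W+W$ contains $W$ regardless), and the paper does not include a unit in $\cF^{(n)}$.
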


\begin{proof}
 By inverting the condition in Remark~\ref{rmk:charac-alg-amen2}, we see that $\cA$ is not algebraically amenable if and only if there exists $\varepsilon > 0$ and finite subset $\cF \subset \cA$, such that for any nonzero finite dimensional linear subspace $W \subset \cA$, we have 
 \[
   \frac{\dim(\cF W +W)}{\dim(W)} > 1+\varepsilon\;.
 \]
 This proves the ``if'' part. For the ``only if'' part, we observe that $\varepsilon$ can be taken to be arbitrarily large: we set
 \[
  \cF^{(n)} = \big\{a_1 \cdots a_m \ | \ m \in \{1, \ldots, n\}, \ \ a_k \in \cF_0, \ \forall k \in \{1, \ldots, m\} \big\} \; .
 \]
 Then by induction we have 
 \[
   \frac{\dim(\cF_0^{(n)} W +W)}{\dim(W)} > (1+\varepsilon)^n \;.
 \]
 For our purpose, we fix $\cF' = \cF^{(\lceil \log_{1+\varepsilon} \lambda \rceil +1)}$, so that 
 \[
   \frac{\dim(\cF' W +W)}{\dim(W)} > \lambda \;.
 \]
 Replacing $\cF$ by $\cF'$ proves the ``only if'' direction. 
\end{proof}

The following is a key proposition of this section. It generalizes Proposition~2.2 in 
\cite{Elek03} to arbitrary $\mathbb K$-algebras which may have zero-divisors, have no unit, or have uncountable dimensions.
To prove this, we adapt ideas from \cite[Theorem 3.4, (vi) $\Rightarrow$ (v)]{KL15} (see also \cite{KL15}) in the context of groups and metric spaces 
to the algebraic setting. 

\begin{proposition}
\label{prop:non-amenable-implies-paradoxic}
Assume that $\cA$ is a $\mathbb K$-algebra which is not algebraically amenable. Then there exists a finite subset $\cF \subset \cA$ such that for any basis $\{e_i\}_{i \in I}$ of $\cA$, there is a paradoxical decomposition of $\{e_i\}_{i \in I}$ by $\cF$.
\end{proposition}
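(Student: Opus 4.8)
The plan is to mimic the proof of Theorem~\ref{theorem:amenable-extended-metric}, replacing cardinality by dimension and neighborhoods by left multiplication, using Lemma~\ref{lem:local-doubling} to establish a ``local doubling'' property and then Zorn's lemma to upgrade it to a global paradoxical structure. First I would invoke Lemma~\ref{lem:local-doubling} with, say, $\lambda = 2$ to obtain a finite subset $\cF_0 \subset \cA$ such that $\dim(\cF_0 W + W) > 2\dim(W)$ for every nonzero finite-dimensional $W$. After adjoining $\1$ (of $\cA$ or $\widetilde{\cA}$) to $\cF_0$ and enlarging once more to a finite set $\cF$, I would arrange that $\dim(\mathrm{span}(\cF W)) \geq \dim(\cF_0 W + W) > 2 \dim(W)$, i.e. the span of $\{fw : f\in\cF, w\in W\}$ has more than twice the dimension of $W$. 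This is the linear analogue of the ``$|N_{R_d}^+ F| > 2|F|$'' condition.

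Next, fix an arbitrary basis $\{e_i\}_{i\in I}$. The key move is to set up the analogue of the set $\Omega$ from the metric proof, now using the index set $I \times \{+,-\}$: let $\Omega$ consist of all maps $\omega$ assigning to each $(i,l)$ a finite subset $\omega(i,l)$ of $\cF e_i$ (a finite set since $\cF$ is finite), subject to the ``Hall condition'' that for every finite $K \subset I\times\{+,-\}$, the family $\bigcup_{(i,l)\in K}\omega(i,l)$ is linearly independent in $\cA$ with at least $|K|$ elements. The crucial verification that $\Omega \neq \varnothing$ — taking $\omega(i,l)$ to be a fixed basis of $\cF e_i$ — uses precisely the local doubling estimate: for $K = K_+\times\{+\}\sqcup K_-\times\{-\}$, the union $\bigcup_{i\in K_+\cup K_-}(\cF e_i)$ spans a space of dimension $> 2|K_+\cup K_-| \geq |K_+|+|K_-| = |K|$, so it contains a linearly independent subfamily of the right size (here one must be slightly careful: we need the chosen independent sets within distinct $\cF e_i$ to be disjoint and jointly independent, which can be arranged by choosing a basis of $\mathrm{span}(\cF e_i)$ inside $\cF e_i$ and noting disjointness of the $\cF e_i$ can be forced at the level of labelled copies, or by a Steinitz-exchange argument). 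I would then order $\Omega$ by pointwise inclusion, check that descending chains have lower bounds (pointwise intersection, using that linear independence and the cardinality bound are preserved under decreasing intersections of finite sets), and extract a minimal element $\omega_m$ by Zorn.

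Then comes the minimality argument showing $|\omega_m(i,l)| = 1$ for all $(i,l)$: if some $\omega_m(i_0,l_0)$ had two distinct vectors $x_+, x_-$, minimality produces finite sets $K_+, K_-$ not containing $(i_0,l_0)$ witnessing that neither $x_+$ nor $x_-$ can be removed, and then the inclusion-exclusion computation — identical in form to the one in the proof of Theorem~\ref{theorem:amenable-extended-metric}, but with $|\cdot|$ now meaning the dimension of the span — yields $|K_+| + |K_-| \geq 1 + |K_+| + |K_-|$, a contradiction. This is where I expect the main subtlety: the metric proof uses $|Z_+ \cup Z_-| + |Z_+ \cap Z_-| = |Z_+| + |Z_-|$, which for cardinalities is an equality but for dimensions of spans is only $\dim(U+V) + \dim(U\cap V) = \dim U + \dim V$ applied to the \emph{spans}; one has to be careful to phrase everything in terms of $\dim\mathrm{span}(\cdot)$ and to know that $\omega_m(i,l)$ being a single vector means $\mathrm{span}(\omega_m(i,l))$ is one-dimensional. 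I believe the argument goes through because the ``Hall condition'' is stated using $\dim\mathrm{span}$ throughout, and $\dim$ is submodular just like cardinality. Finally, writing $t_l(e_i)$ for the unique vector in $\omega_m(i,l) \subset \cF e_i$, I would define $L_l := \{e_i : t_l(e_i) \in \cF_0 e_i \text{ via a fixed } f\}$ — more precisely, partition $I$ according to which element $f \in \cF$ realizes $t_l(e_i) = f e_i$ (finitely many classes since $\cF$ is finite), obtaining partitions of $\{e_i\}$ indexed by $\cF$; the linear independence and disjointness built into $\Omega$ guarantee that $\{t_+(e_i)\}_i \cup \{t_-(e_i)\}_i$ is a disjoint linearly independent family, which is exactly a paradoxical decomposition of $\{e_i\}_{i\in I}$ by $\cF$ in the sense of Definition~\ref{def:paradoxicality} (after absorbing the ``$L_0, R_0$'' slot via the $\1$ we added to $\cF$, as in Remark~\ref{rem:equiv-Elek-def}).
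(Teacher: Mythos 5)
Your overall strategy is exactly the paper's: Lemma~\ref{lem:local-doubling} with $\lambda=2$ produces the local doubling set, a formal unit homogenizes the $L_0,R_0$ slots (the paper adjoins an abstract symbol $*$ with $*\cdot e_i=e_i$ rather than $\1_{\widetilde{\cA}}$, but this is cosmetic), and Zorn's lemma applied to set-valued maps on $I\times\{+,-\}$ subject to a Hall-type condition, followed by the minimality/submodularity computation, yields a singleton-valued map from which the paradoxical decomposition is read off. The point you flag as the main subtlety is handled exactly as you predict: one uses $\dim(W_0+W_1)+\dim(W_0\cap W_1)=\dim W_0+\dim W_1$ together with the inclusion of $\mathrm{span}\big(\bigcup_{(i',j')\in K_0\cap K_1}\cdots\big)$ into $W_0\cap W_1$, and the resulting inequality points the right way.

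The one genuine problem is your definition of $\Omega$. As written, you require that for every finite $K$ the family $\bigcup_{(i,l)\in K}\omega(i,l)$ be \emph{linearly independent} with at least $|K|$ elements. Under that condition your verification that $\Omega\neq\varnothing$ fails: the subspaces $\mathrm{span}(\cF e_i)$ for distinct $i$ may intersect nontrivially (e.g.\ if $fe_i=f'e_j$ for some $f,f'\in\cF$), and then no choice of bases of the individual $\mathrm{span}(\cF e_i)$ is jointly independent. ``Labelled copies'' cannot force linear independence of actual vectors in $\cA$, and a Steinitz exchange extracts an independent subfamily only for one fixed $K$ at a time, not a single $\omega$ valid for all finite $K$ simultaneously \textemdash~ since linear independence is a finitary condition, such an $\omega$ would already be a globally independent family, which is essentially the conclusion you are trying to prove. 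The correct condition \textemdash~ the one the paper uses, and the one you yourself invoke later when you say the Hall condition is ``stated using $\dim\mathrm{span}$ throughout'' \textemdash~ is the weaker requirement $\dim_{\K}\mathrm{span}\big(\bigcup_{(i,l)\in K}\omega(i,l)\big)\geq |K|$. With that condition the full assignment $\omega(i,l)=\cF^{+}e_i$ lies in $\Omega$ immediately by local doubling, with no care needed; descending chains still have lower bounds because each $\omega(i,l)$ ranges over subsets of a fixed finite set; the minimality argument goes through verbatim; and the final singleton-valued map automatically produces a disjoint, linearly independent family, since every finite subfamily of $|K|$ vectors spans a space of dimension $|K|$. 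With this correction your proof coincides with the paper's.
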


\begin{proof}
 By Lemma~\ref{lem:local-doubling}, there exists a finite subset $\cF \subset \cA$, such that for any nonzero finite dimensional linear subspace $W \subset \cA$, we have 
 \[
   \frac{\dim(\cF W +W)}{\dim(W)} > 2\;.
 \]
 Such a local doubling behavior of $\cF$ can be seen as a local form of paradoxicality, which we will now exploit to produce a paradoxical decomposition for 
 any basis $\{e_i\}_{i \in I}$ of $\cA$. To this end, we define $\cF^+ = \cF \sqcup \{*\}$, where $*$ is an abstract element, for which we prescribe a multiplication 
 $* \cdot e_i = e_i$ for any $i \in I$ (thus $*$ behaves like a unit). Define $\Omega$ to be the set of maps $\omega \colon I \times \{0,1\} \to \mathcal{P}(\cF^+)$ 
 (the power set of $\cF^+$) with the property that for any finite subset $K \subset I \times \{0,1\}$, 
 \[
  \dim_{\mathbb K} \Bigg( \mathrm{span}_{\mathbb K} \bigg( \bigcup_{(i, j) \in K} \bigcup_{a \in \omega(i,j)} a \cdot e_i \bigg) \Bigg) \geq | K | \; .
 \]
 Notice that $\Omega$ is nonempty: the constant function with value $\cF^+$ lives in $\Omega$ because of the local doubling behavior of $\cF$.

 Our goal is to ``trim down'' the above constant set-valued function to a singleton-valued function in $\Omega$. For this purpose, 
 we use the natural partial order on $\Omega$ given by pointwise inclusion: $\omega \leq \omega'$ if $\omega(i,j) \subset \omega'(i,j)$ 
 for any $(i,j) \in I \times \{0,1\}$. Since any descending chain in $\Omega$ has a non-empty lower bound given by pointwise intersection, 
 by Zorn's Lemma, we can find a minimal element $\omega_0 \in \Omega$.
 
 We claim that $| \omega_0(i,j) | = 1$ for any $(i,j) \in I \times \{0,1\}$. Firstly, since 
 \[
  \dim_{\mathbb K} \bigg( \mathrm{span}_{\mathbb K} \Big( \bigcup_{a \in \omega_0(i,j)} a \cdot e_i \Big) \bigg) \geq |\{(i,j)\} | = 1 \; ,
 \]
 we only need to show $| \omega_0(i,j) | \leq 1$. Then, suppose this were not the case: then there exists an index $(i,j)\in I\times\{0,1\}$ and two
 distinct elements $a_0, a_1 \in \omega_0(i,j)$. Notice that the minimality of $\omega_0$ implies that for $l \in \{0,1\}$, 
 we can find a finite subset $K_l \subset I \times \{0,1\}$ not containing $(i,j)$, such that 
 \[
  \dim_{\mathbb K} \Bigg( \mathrm{span}_{\mathbb K} \bigg( \Big( \bigcup_{(i', j') \in K_l} \bigcup_{a \in \omega_0(i',j')} a \cdot e_{i'} \Big) \cup \Big( \bigcup_{a \in \omega_0(i,j) \setminus \{a_l\} } a \cdot e_i \Big) \bigg) \Bigg) 
  \leq |K_l| \; ,
 \]
 since otherwise if no such $K_l$ exists, we would be able to remove $a_l$ from $\omega_0(i,j)$ to produce a new element in $\Omega$ strictly smaller than $\omega_0$. 
 
 Now because of the simple fact that $(\omega_0(i,j) \setminus \{a_0\} ) \cup (\omega_0(i,j) \setminus \{a_1\} ) = \omega_0(i,j)$, we would see that, if we denote 
 \[
  W_l := \mathrm{span}_{\mathbb K} \bigg( \Big( \bigcup_{(i', j') \in K_l} \bigcup_{a \in \omega_0(i',j')} a \cdot e_{i'} \Big) \cup \Big( \bigcup_{a \in \omega_0(i,j) \setminus \{a_l\} } a \cdot e_i \Big) \bigg)
 \]
 for $l \in \{0,1\}$, then 
 \begin{eqnarray*}
   | K_0| + | K_1 | 
         &\geq&   \dim_{\mathbb K} (W_0) +  \dim_{\mathbb K} (W_1) \\
         &=&      \dim_{\mathbb K} (W_0 + W_1) +  \dim_{\mathbb K} (W_0 \cap W_1) \\
         &\geq&  \dim_{\mathbb K} \Bigg( \mathrm{span}_{\mathbb K} \bigg( \Big( \bigcup_{(i', j') \in K_0 \cup K_1} \bigcup_{a \in \omega_0(i',j')} a \cdot e_{i'} \Big) 
                 \cup \Big( \bigcup_{a \in \omega_0(i,j) } a \cdot e_i \Big) \bigg) \Bigg) \\
         && + \dim_{\mathbb K} \Bigg( \mathrm{span}_{\mathbb K} \Big( \bigcup_{(i', j') \in K_0 \cap K_1} \bigcup_{a \in \omega_0(i',j')} a \cdot e_{i'} \Big) \Bigg) \\
         &\geq &|K_0 \cup K_1 \cup \{(i,j)\}| + |K_0 \cap K_1| \\
         &=& |K_0 \cup K_1| + 1 + |K_0 \cap K_1| \\
         &=& |K_0| + |K_1| + 1 \; , 
 \end{eqnarray*}
 which gives a contradiction. Hence we have proved our claim that $| \omega_0(i,j)| = 1$ for any $(i,j) \in I \times \{0,1\}$.
 
 Thus we may define $\phi \colon I \times \{0,1\} \to \cF^+$ such that $\omega_0(i,j) = \{ \phi(i,j) \}$. It follows from the defining property of $\Omega$ that $\phi$ satisfies 
 \[
  \dim_{\mathbb K} \bigg( \mathrm{span}_{\mathbb K} \Big( \bigcup_{(i, j) \in K} \phi(i,j) \cdot e_i \Big) \bigg) = | K | 
 \]
 for any finite subset $K \subset I \times \{0,1\}$, i.e.,  $\{ \phi(i,j) \cdot e_i \}_{(i,j) \in I \times \{0,1\} }$ is a linearly independent family in $\cA$. 
 
 To conclude the proof, we define, for each $a \in \cF^+$, 
 \begin{align*}
  L_a =& \ \{ e_i \ | \ i \in I, \phi(i,0) = a \} \\
  R_a =& \ \{ e_i \ | \ i \in I, \phi(i,1) = a \} \; .
 \end{align*}
 Therefore we have two finite partitions 
 \[
  \{e_i\}_{i\in I} = L_* \sqcup \bigsqcup_{a \in \cF} L_a = R_* \sqcup \bigsqcup_{a \in \cF} R_a 
 \]
 such that 
 \[
  \Big( L_* \cup \bigcup_{a \in \cF} a L_a \Big) \cup  \Big( R_* \cup \bigcup_{a \in \cF} a R_a \Big) 
 \]
 is a disjoint union and linearly independent family in $\cA$. Thus we have produced a paradoxical decomposition of $\{e_i\}_{i\in I}$ by $\cF$ in the sense of Definition~\ref{def:paradoxicality}.
\end{proof}

Now we define a suitable notion of invariant dimension-measure for $\mathbb{K}$-algebras, an analogue of invariant mean for amenable groups. Note that the lack of distributivity in the lattice of subspaces of a vector space
makes it necessary to give up some of the properties one would expect for this concept.

\begin{definition}
 \label{def:dimension-measure}
 Let $\cA$ be a $\mathbb K$-algebra and $\{e_i \}_{i \in I}$ be a $\mathbb K$-linear basis of $\cA$. A {\it dimension-measure} on $\cA$ associated to $\{e_i \}_{i \in I}$ is a function $\mu$ from the 
 set of linear subspaces of $\cA$
 to $[0,1]$ which satisfies the following properties:
 \begin{enumerate}
  \item[(i)] $ \mu (\cA) =1$.
  \item[(ii)] If $A,B$ are linear subspaces in $\cA$ with $A\cap B = \{0 \}$, then $\mu (A\oplus B)  \ge \mu (A) +  \mu (B)$.
  \item[(iii)] For every partition $L_1\sqcup L_2\sqcup \ldots \sqcup L_m$
  of $\{e_i \}_{i \in I}$, we have $\sum _{k=1}^m \mu (\mathrm{span}(L_k)) =1$. 
 \end{enumerate}
 Let $\mathcal{S}$ be a subset of $\cA$. We say $\mu$ is \emph{$\mathcal{S}$-invariant} if 
 \begin{enumerate}
  \item[(iv)] For any $s\in \mathcal{S}$ and any linear subspace $A \subset \cA$ such that $s|_A$ is injective, we have $\mu (sA) \ge \mu (A)$.
 \end{enumerate}
\end{definition}

Note that if $\mu$ is a dimension-measure on $\cA$ and $A\subseteq B$ are subspaces of $\cA$, then, by property (ii), it follows that 
$\mu (A) \le \mu (B)$.

We can now state the following generalization of \cite[Theorem 1]{Elek03}.

\begin{theorem}
\label{theorem:aa-characterized}
Let $\cA$ be a $\mathbb{K}$-algebra. Then the following conditions are equivalent:
 \begin{enumerate}
  \item $\cA$ is algebraically amenable.
  \item For any finite subset $\cF \subset \cA$, there is a basis of $\cA$ that cannot be paradoxically decomposed by $\cF$.
  \item For any countably dimensional linear subspace $W \subset \cA$, there is a basis of $\cA$ that cannot be paradoxically decomposed by $W$.
  \item For any countably dimensional linear subspace $W \subset \cA$, there exists a $W$-invariant dimension-measure on $\cA$ (associated to some basis).
 \end{enumerate}
\end{theorem}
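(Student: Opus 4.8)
\textbf{Proof plan for Theorem~\ref{theorem:aa-characterized}.}
The plan is to prove the implications $(1)\Rightarrow(2)$, $(2)\Rightarrow(3)$, $(3)\Rightarrow(4)$, and $(4)\Rightarrow(1)$, closing the cycle. The implication $(1)\Rightarrow(2)$ is just the contrapositive of Proposition~\ref{prop:non-amenable-implies-paradoxic}: if $\cA$ is not algebraically amenable, that proposition yields a single finite $\cF$ that paradoxically decomposes \emph{every} basis, so for non-amenable $\cA$ there is a finite set $\cF$ that kills all bases, which is exactly the negation of (2). For $(2)\Rightarrow(3)$, given a countably dimensional subspace $W$, I would fix a countable basis $\{w_n\}$ of $W$ and approximate: any paradoxical decomposition of a basis by $W$ uses only finitely many elements of $W$ as the translating multipliers $g_i, h_j$, hence lies in $\mathrm{span}(w_1,\ldots,w_N)$ for some $N$; thus it suffices to defeat paradoxicality by each finite set $\cF_N := \{w_1,\ldots,w_N\}$ simultaneously. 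The subtlety is that (2) produces, for each $\cF$, \emph{a} basis, but we need \emph{one} basis good against all $\cF_N$ at once. I would handle this by a diagonal/compactness argument: work with the increasing chain $\cF_1 \subset \cF_2 \subset \cdots$; actually the cleanest route is to observe that the relation ``$\{e_i\}$ cannot be paradoxically decomposed by $\cF$'' is preserved when passing to larger $\cF$ in the wrong direction — so instead I would re-derive (3) directly from algebraic amenability via (1), i.e., prove $(1)\Rightarrow(3)$ in parallel with $(1)\Rightarrow(2)$, again by contraposition of Proposition~\ref{prop:non-amenable-implies-paradoxic} with $\cF$ chosen inside $W$ (the finite set produced by Lemma~\ref{lem:local-doubling} can be taken inside any generating set, in particular inside a large enough finite piece of a countably dimensional $W$ when $W$ itself generates — but in general we only get $\cF \subset \cA$, so this needs care). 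Given this delicacy, I expect the organization to be $(1)\Rightarrow(2)$, $(1)\Rightarrow(3)$ directly, then $(3)\Rightarrow(4)$, $(4)\Rightarrow(1)$, with $(2)$ folded in as an immediate consequence of $(3)$ (taking $W = \mathrm{span}(\cF)$).

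\textbf{The hard part: $(3)\Rightarrow(4)$.}
This is the construction of an invariant dimension-measure from the absence of paradoxical decompositions, the analogue of the Tarski/Following-the-money argument for groups, but adapted to the non-distributive lattice of subspaces. The plan is: fix a countably dimensional $W$; by (3) fix a basis $\{e_i\}_{i\in I}$ of $\cA$ that is not paradoxically decomposed by $W$. One then wants a finitely-additive-type functional $\mu$ on subspaces with $\mu(\cA)=1$, superadditive on direct sums (property (ii)), exactly additive on spans of basis-partitions (property (iii)), and monotone under injective left multiplication by elements of $W$ (property (iv)). The standard route is to define, for a subspace $A$, an ``inner content'' using how many disjoint $W$-translates of pieces of the basis can be packed inside $A$, and dually an ``outer content''; non-paradoxicality should force these to be compatible and to admit a common value via a Hahn--Banach / invariant-mean extension. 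Concretely I would set up the type semigroup or a suitable monoid of ``equidecomposability types'' built from basis subsets together with $W$-translations, show that non-paradoxicality of $\{e_i\}$ means the type of $\cA$ is not ``$\geq 2\cdot$ (type of $\cA$)'', invoke a Tarski-type theorem (or directly an invariant Hahn--Banach argument on the space of functions on $I$, pushing forward a $W$-invariant finitely additive measure on subsets of $I$) to get a normalized invariant state, and finally define $\mu(A)$ by taking this measure of a maximal basis-subset whose span sits inside $A$ (an infimum/supremum over basis subsets compatible with $A$). Properties (i) and (iii) will be immediate from normalization and additivity of the underlying measure on $I$; property (ii) (superadditivity on direct sums) will hold because a direct sum accommodates the disjoint union of the two witnessing basis-subsets; property (iv) ($\mu(sA)\geq\mu(A)$ for $s\in W$ injective on $A$) is where invariance of the measure on $I$ under the partial $W$-translations enters. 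The main obstacle is precisely that subspaces are not determined by basis-subsets and the lattice fails to be distributive, so the naive ``$\mu(A)=$ measure of the basis vectors lying in $A$'' is not even monotone; getting a well-defined $\mu$ that satisfies (ii) only as an \emph{inequality} (which is all that is demanded) requires defining $\mu(A)$ as a sup over basis subsets $L$ with $\mathrm{span}(L)\subseteq A$ of the underlying measure of $L$, and then checking (iv) using that $s\cdot\mathrm{span}(L)=\mathrm{span}(sL)$ when $s|_{\mathrm{span}(L)}$ is injective, together with invariance.

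\textbf{The easy direction: $(4)\Rightarrow(1)$.}
This is the contrapositive of Proposition~\ref{prop:non-amenable-implies-paradoxic}. Suppose $\cA$ is not algebraically amenable; then there is a finite $\cF\subset\cA$ and, for the countably dimensional subspace $W := \mathrm{span}(\cF)$, a paradoxical decomposition of \emph{every} basis $\{e_i\}_{i\in I}$ by $\cF$ (hence by $W$). But a $W$-invariant dimension-measure $\mu$ associated to such a basis cannot exist: applying $\mu$ to the two partitions $(L_0,\ldots,L_n)$ and $(R_0,\ldots,R_m)$ of $\{e_i\}$ gives, by property (iii), $\sum_k \mu(\mathrm{span}(L_k)) = 1 = \sum_j \mu(\mathrm{span}(R_j))$; while the families $L_0,g_1L_1,\ldots,g_nL_n,R_0,h_1R_1,\ldots,h_mR_m$ are jointly disjoint and linearly independent, so by property (ii) applied to their direct sum inside $\cA$ and by property (iv) (each $g_i$, $h_j$ is injective on the relevant span),
\[
1 = \mu(\cA) \geq \sum_{k} \mu(\mathrm{span}(g_k L_k)) + \sum_{j} \mu(\mathrm{span}(h_j R_j)) \geq \sum_k \mu(\mathrm{span}(L_k)) + \sum_j \mu(\mathrm{span}(R_j)) = 2,
\]
with the $g_0=h_0=\1$ convention (Remark~\ref{rem:equiv-Elek-def}(i)) absorbing $L_0,R_0$. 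This contradiction shows that (4) fails, completing $(4)\Rightarrow(1)$. Finally, for $(2)\Leftrightarrow(3)$ note that (3) trivially implies (2) by taking $W=\mathrm{span}(\cF)$, and conversely, since Proposition~\ref{prop:non-amenable-implies-paradoxic} only needs a \emph{finite} $\cF$ to paradoxically decompose every basis, the negation of (2) already forces $\cA$ non-amenable, hence (via $(1)\Rightarrow(3)$) the negation of (3); so the whole cycle closes. I expect essentially all the real work to be in $(3)\Rightarrow(4)$; everything else is bookkeeping around Proposition~\ref{prop:non-amenable-implies-paradoxic}.
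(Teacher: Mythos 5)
Your cycle does not close: you never establish any implication \emph{out of} condition (1), and the step you rely on for this rests on a reversed reading of Proposition~\ref{prop:non-amenable-implies-paradoxic}. That proposition asserts $\neg(1)\Rightarrow\neg(2)$ (non-amenability produces one finite $\cF$ that paradoxically decomposes \emph{every} basis); its contrapositive is therefore $(2)\Rightarrow(1)$, not $(1)\Rightarrow(2)$, and the same reversal infects your proposed ``$(1)\Rightarrow(3)$ directly by contraposition''. What you actually obtain is $(3)\Rightarrow(2)\Rightarrow(1)$, $(4)\Rightarrow(1)$ (your $1\ge 2$ computation there is correct and matches the paper's proof of $(4)\Rightarrow(3)$), and, if your hard step worked, $(3)\Rightarrow(4)$ \textemdash~ all arrows pointing \emph{into} (1) and none leaving it. The direction that genuinely requires work is $(1)\Rightarrow(4)$: one must \emph{construct} an invariant dimension-measure from algebraic amenability. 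The paper does this by passing to a countably dimensional properly algebraically amenable subalgebra containing $W$ (Proposition~\ref{pro:alg-amenability-countability}), taking an exhausting F\o lner sequence $\{W_i\}$ and a free ultrafilter $\omega$, and setting $\mu(A)=\lim_\omega \dim(A\cap W_i)/\dim(W_i)$; property (iv) is extracted from the F\o lner condition via the auxiliary identity $\mu(A)=\lim_\omega \dim((W_i+aW_i)\cap A)/\dim(W_i)$, and a separate case, using the finite-dimensional left ideal supplied by Theorem~\ref{theorem:when-automatically-proper-alg-amen}, handles algebras that are amenable but not properly amenable. Nothing in your proposal plays this role.

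Your substitute hard step, $(3)\Rightarrow(4)$ via a Tarski-type argument on a type semigroup of basis subsets, is not the paper's route and, as sketched, has a concrete defect. If $\mu(A)$ is defined as a supremum of the measure of basis subsets $L$ with $\mathrm{span}(L)\subseteq A$, then property (iv) fails: for $s\in W$ injective on $\mathrm{span}(L)$, the set $sL$ is in general \emph{not} a subset of the chosen basis, so $s\cdot\mathrm{span}(L)=\mathrm{span}(sL)$ may contain no basis vectors at all and would receive measure $0<\mu(\mathrm{span}(L))$. This is exactly the non-distributivity obstruction the paper circumvents by building $\mu$ from dimensions of intersections with F\o lner subspaces rather than from basis subsets. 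Even granting a repaired Tarski-type argument, you would still need $(1)\Rightarrow(3)$ (or $(1)\Rightarrow(2)$, or $(1)\Rightarrow(4)$) to close the equivalence, and that implication \textemdash~ the real content of the theorem \textemdash~ is absent from the proposal.
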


\begin{proof}
The implication (2)$\Rightarrow$(1) follows from Proposition~\ref{prop:non-amenable-implies-paradoxic}. The implication (3)$\Rightarrow$(2) is immediate by setting $W = \mathrm{span}(\cF)$. 

To show (4)$\Rightarrow$(3), we fix an arbitrary countably dimensional linear subspace $W \subset \cA$. By (4), there is a basis $\{ e_i \}_{i \in I}$ of $\cA$ and a $W$-invariant dimension-measure $\mu$ on $\cA$ associated to $\{ e_i \}_{i \in I}$. Suppose there were a paradoxical decomposition 
\[
 \big( (L_0, \ldots, L_n), (R_0, \ldots, R_m), (g_1, \ldots, g_n), (h_1, \ldots, h_m) \big)
\]
of $\{ e_i \}_{i \in I}$ by $W$. Put $A_k := \textrm{span}(L_k)$ and $B_l := \textrm{span}(R_l)$. 
We have $\sum_{k=0}^n \mu (A_k) = 1 = \sum_{l=0}^m \mu (B_l)$ (by (iii) in Definition~\ref{def:dimension-measure}).
Also $g_k|_{A_k}$ and $h_l|_{B_l}$ are injective for all $k,l$ and so $\mu (g_kA_k )\ge \mu (A_k) $ and $\mu (h_lB_l) \ge \mu (B_l)$ 
for all $k,l$ (by (iii)), so that we get 
\begin{align*}
1 & \ge \mu (A_0 \oplus g_1A_1\oplus \ldots g_nA_n\oplus B_0 \oplus h_1B_1\oplus \ldots \oplus h_mB_m)\\
& \ge \mu (A_0) + \sum_{k=1}^n \mu (g_kA_k) + \mu (B_0) + \sum_{l=1}^m \mu (h_lB_l)\\
&  \ge \sum_{k=0}^n \mu (A_k) + \sum_{l=0}^m \mu (B_l) = 2,  
\end{align*}
which is a contradiction.

Finally, to show (1)$\Rightarrow$(4) we construct, for an arbitrary countably dimensional linear subspace $W \subset \cA$, a dimension-measure $\mu$ on $\cA$ associated to some basis. This involves two cases:
\begin{enumerate}
 \item[Case 1:]$\cA$ is properly algebraically amenable. By Proposition~\ref{pro:alg-amenability-countability}, there is a countably dimensional subalgebra $\cB \subset \cA$ that is properly algebraically amenable and contains $W$. Let $\{ W_i \}_{i=1}^{\infty}$ be an increasing 
sequence of finite-dimensional subspaces of $\cA$ such that $\cB= \cup_{i=1}^{\infty} W_i$, and such that 
$$\lim_{i\to\infty} \frac{\dim (aW_i+W_i)}{\dim (W_i)} = 1$$
for all $a\in \cB$. Let $\omega $ be a free ultrafilter on $\mathbb N$, and let $\{e_i \}_{i=1}^{\infty}$ be a basis for $\cB$ 
obtained by successively enlarging basis of the spaces $W_i$ (cf. \cite[Proposition 2.1]{Elek03}). We then enlarge  $\{e_i \}_{i=1}^{\infty}$ to a basis $\{e_i \}_{i \in I}$ of $\cA$, where $\mathbb{N} \subset I$.
For a linear subspace $A$ of $\cA$, set
\[
\mu (A) = \lim _{\omega} \frac{\dim (A\cap W_i)}{\dim (W_i)}\;.
\]
Obviously, we have $\mu (\cA) =1 $ and $0 \le \mu (A)\le 1$ for every subspace $A$. Moreover, properties~(ii) and (iii) in 
Definition~\ref{def:dimension-measure} clearly hold, so we only need to check (iv). 

To prove (iv) we first show that for any $a\in W$ and any linear subspace $A$ we have
\begin{equation}\label{eq:mu-limit}
\mu (A) = \lim _{\omega}\frac{\dim ( (W_i+aW_i)\cap A ) } {\dim (W_i)}\;.
\end{equation}

 Write $T_i = (W_i+aW_i)\cap A$. Then $T_i\cap W_i = A\cap W_i $, so that $T_i = (W_i\cap A)\oplus T_i'$ with
 $T_i'\cap W_i = \{ 0\}$. Hence 
 $$\frac{\dim (T_i)}{\dim (W_i)} = \frac{\dim (W_i\cap A)}{\dim (W_i)} + \frac{\dim (T_i')}{\dim (W_i)}.$$
 Since $\dim (T_i') /\dim (W_i)\to 0$, we obtain the result.

We now show (iv). Let $a \in W$ be such that $a|_A$ is injective. Then we have
\begin{align*}
 \mu (aA) & = \lim _{\omega} \frac{\dim ((W_i+aW_i)\cap aA)}{\dim (W_i)} \ge \lim _{\omega} \frac{\dim (aW_i\cap aA)}{\dim (W_i)}\\
& \ge \lim _{\omega} \frac{\dim (a(W_i\cap A))}{\dim (W_i)}
 = \lim _{\omega} \frac{\dim (W_i \cap A)}{\dim (W_i)}= \mu (A) ,
\end{align*}
where in the second equality we have used that $a|A$ is injective.

\item[Case 2:]$\cA$ is algebraically amenable but not properly algebraically amenable.  
By Theorem~\ref{theorem:when-automatically-proper-alg-amen}, we only need to build a dimension-measure 
in the case where $\cA$ has a nonzero finite-dimensional left ideal $I$. 
This is easily taken care of by defining $$\mu (A) = \frac{\dim (I\cap A)}{\dim (I)}$$ for
each linear subspace $A \subset \cA$. 
\end{enumerate}
This concludes the proof of Theorem.
\end{proof}

For countably dimensional (or equivalently, countably generated) $\mathbb{K}$-algebras, the statement of the previous theorem can be somewhat simplified:

\begin{corollary}\label{cor:charac-amen-countable}
 Let $\cA$ be a countably dimensional $\mathbb{K}$-algebra. Then the following conditions are equivalent:
 \begin{enumerate}
  \item $\cA$ is algebraically amenable.
  \item There is a basis of $\cA$ that cannot be paradoxically decomposed by $\cA$.
  \item There exists an $\cA$-invariant dimension-measure on $\cA$ (associated to some basis).
 \end{enumerate}
\end{corollary}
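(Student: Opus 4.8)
The plan is to obtain this as a direct specialization of Theorem~\ref{theorem:aa-characterized}, taking the auxiliary subspace $W$ (respectively the finite set $\cF$) appearing there to be all of $\cA$. This is legitimate precisely because the standing hypothesis that $\cA$ is countably dimensional makes $\cA$ itself a countably dimensional linear subspace of $\cA$ — and, consequently, \emph{every} linear subspace of $\cA$ is countably dimensional. I would organize the argument as the cycle $(1)\Rightarrow(3)\Rightarrow(2)\Rightarrow(1)$, mirroring the structure of the proof of Theorem~\ref{theorem:aa-characterized} but collapsing the role of the variable $W$.

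For $(1)\Rightarrow(3)$, I would apply the implication $(1)\Rightarrow(4)$ of Theorem~\ref{theorem:aa-characterized} to the countably dimensional subspace $W=\cA$. This furnishes a basis $\{e_i\}_{i\in I}$ of $\cA$ together with an $\cA$-invariant dimension-measure on $\cA$ associated to it, which is exactly condition~(3). For $(3)\Rightarrow(2)$, let $\mu$ be an $\cA$-invariant dimension-measure associated to a basis $\{e_i\}_{i\in I}$. One can either observe that $\mu$ is automatically $W$-invariant for every linear subspace $W\subseteq\cA$ (property~(iv) of Definition~\ref{def:dimension-measure} quantifies over all $s$ in the ambient algebra), so that condition~(4) of Theorem~\ref{theorem:aa-characterized} holds and the theorem applies; or, more transparently, one repeats verbatim the computation in the proof of $(4)\Rightarrow(3)$ there with $W$ replaced by $\cA$. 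Namely, if $\{e_i\}_{i\in I}$ were paradoxically decomposed by $\cA$ via partitions $(L_0,\ldots,L_n)$, $(R_0,\ldots,R_m)$ and elements $g_k,h_l\in\cA$, then, writing $A_k=\mathrm{span}(L_k)$ and $B_l=\mathrm{span}(R_l)$ and using that all $g_k|_{A_k}$ and $h_l|_{B_l}$ are injective, the superadditivity~(ii), the partition-normalization~(iii) and the $\cA$-invariance~(iv) of $\mu$ force
\[
1\ \ge\ \mu\!\left(A_0\oplus g_1A_1\oplus\cdots\oplus g_nA_n\oplus B_0\oplus h_1B_1\oplus\cdots\oplus h_mB_m\right)\ \ge\ \sum_{k=0}^{n}\mu(A_k)+\sum_{l=0}^{m}\mu(B_l)\ =\ 2,
\]
a contradiction; hence $\{e_i\}_{i\in I}$ cannot be paradoxically decomposed by $\cA$, which is condition~(2).

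For $(2)\Rightarrow(1)$, suppose $\{e_i\}_{i\in I}$ is a basis of $\cA$ that cannot be paradoxically decomposed by $\cA$. Then for every finite subset $\cF\subseteq\cA$ it cannot be paradoxically decomposed by $\cF$ either, since any paradoxical decomposition of $\{e_i\}_{i\in I}$ by $\cF$ is a fortiori one by $\cA$ (the relevant multipliers $g_k,h_l$ lie in $\cF\subseteq\cA$). Thus condition~(2) of Theorem~\ref{theorem:aa-characterized} is met — with this \emph{single} basis serving for every $\cF$ simultaneously — and the theorem yields that $\cA$ is algebraically amenable.

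I do not expect a genuine obstacle in this corollary: everything reduces to Theorem~\ref{theorem:aa-characterized}. The only two points worth a moment's care are (a) the direction of the quantifier over bases, i.e.\ that the lone basis produced in clause~(2) of the corollary works uniformly over all finite $\cF\subseteq\cA$, which follows from the monotonicity of ``paradoxically decomposable by'' in the multiplier set; and (b) that $\cA$-invariance of a dimension-measure is inherited by every subspace, so that the ``uniform in $W$'' clause~(4) of the theorem is subsumed by the single-measure clause~(3) of the corollary. Both are immediate from the definitions.
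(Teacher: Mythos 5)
Your proposal is correct and takes essentially the same route as the paper, which simply sets $W=\cA$ in Theorem~\ref{theorem:aa-characterized}; your write-up merely makes explicit the two small observations (monotonicity of paradoxical decomposability in the multiplier set, and that $\cA$-invariance of a dimension-measure subsumes $W$-invariance for every subspace $W$) that justify the paper's ``immediate'' specialization.
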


\begin{proof}
 This is immediate after we set $W = \cA$ in the statement of Theorem~\ref{theorem:aa-characterized}.
\end{proof}

\begin{remark}\label{rem:NZD}
 If $\mu$ is as build before, and $a$ is a non-zero-divisor in $\cA$, then one gets $\mu (aA)= \mu (A)$ (cf., \cite{Elek03}). 
 The reason is that, in this case, we have
 $$\dim (a^{-1}W_i + W_i ) \le  \dim (W_i + aW_i), $$
 where $a^{-1}W_i= \{ x\in \cA : ax\in W_i \}$, because left multiplication by $a$ induces an injective map from $a^{-1}W_i+W_i$ into
 $W_i+aW_i$.  Therefore we get
 $$\lim_i \frac{\dim (a^{-1}W_i+W_i)}{\dim (W_i)} = 1.$$
 Hence, for any linear subspace $A$ of $\cA$, we can show
 $$\mu (A) = \lim_{\omega}  \frac{\dim ((a^{-1}W_i+W_i)\cap A)}{\dim (W_i)}$$
 just as in the proof of Eq.~(\ref{eq:mu-limit}).
 
 Moreover, we have
 $$\frac{\dim (a(a^{-1}W_i))}{\dim (W_i)} \ge \frac{\dim (W_i \cap aW_i)}{\dim (W_i)} \to 1 $$
 and thus, 
 $$\mu (B) = \lim _{\omega} \frac{\dim (a(a^{-1}W_i) \cap B)}{\dim (W_i)}$$
 for any linear subspace $B$ of $\cA$.
 We obtain
\begin{align*} \mu (A) & = \lim _{\omega} \frac{\dim ((a^{-1}W_i+W_i)\cap A)}{\dim (W_i)}
 \ge \lim _{\omega} \frac{\dim (a^{-1}W_i\cap A)}{\dim (W_i)}\\
& = \lim _{\omega} \frac{\dim (a(a^{-1}W_i)\cap aA)}{\dim (W_i)} =  \mu (aA).
\end{align*}
This proves our claim. \qed
\end{remark}

Recall the usual Murray-von~Neumann equivalence $\sim $ and comparison $\gtrsim$ for idempotents of an algebra, defined as follows: for idempotents $e,f$ in $\cA$, write $e\sim f$ if there are $x,y\in \cA$ such that $e=xy$ and $f=yx$; write $e \gtrsim f$ if there are $x,y\in \cA$ such that $xy \in e\cA e$ and $f=yx$. These relations naturally extends to the infinite matrix algebra $M_\infty(\cA) := \bigcup_{n=1}^\infty M_n(\cA)$ where the $M_n(\cA)$ embeds into $M_{n+1}(\cA)$ block-diagonally as $M_n(\cA) \oplus 0$. 

An idempotent $e$ in an algebra $\cA$ is said to be {\it properly infinite} if there are orthogonal idempotents $e_1, e_2$ in $e\cA e$ such that $e_1 \sim e\sim e_2$. Equivalently, $e$ is properly infinite if $e \gtrsim e \oplus e$.
A (nonzero) unital algebra $\cA$ is said to be {\it properly infinite} in case $\1$ is a properly infinite idempotent. 

As an application of the dichotomy shown in Theorem~\ref{theorem:aa-characterized}, we present a method of producing non-algebraically amenable $\mathbb{K}$-algebras:

\begin{corollary}
\label{cor:Properly-infinite}
 A properly infinite unital $\mathbb{K}$-algebra is not algebraically amenable.
\end{corollary}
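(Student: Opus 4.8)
The plan is to derive a contradiction from the equivalence between algebraic amenability and the existence of invariant dimension-measures established in Theorem~\ref{theorem:aa-characterized}. Suppose $\cA$ is a properly infinite unital $\K$-algebra that is algebraically amenable. Unravelling proper infiniteness of $\1$, fix orthogonal idempotents $e_1,e_2\in\cA$ together with elements $x_i,y_i\in\cA$ ($i=1,2$) such that $x_iy_i=\1$ and $y_ix_i=e_i$.

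First I would record three elementary consequences of these identities: (a) left multiplication by $y_i$ is injective on all of $\cA$, since $y_ia=0$ forces $a=x_iy_ia=0$; (b) $y_i\cA\subseteq e_i\cA$, since $e_iy_i=y_ix_iy_i=y_i$; and (c) $e_1\cA\cap e_2\cA=\{0\}$, since $z=e_1a=e_2b$ forces $z=e_1z=e_1e_2b=0$, using $e_1e_2=0$ and $e_1^2=e_1$. (One also sees $e_i\neq 0$, as $\cA\neq\{0\}$, though this is not needed.)

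Next I would apply Theorem~\ref{theorem:aa-characterized}, implication (1)$\Rightarrow$(4), to the two-dimensional — hence countably dimensional — subspace $W:=\mathrm{span}_{\K}\{y_1,y_2\}$, obtaining a basis of $\cA$ together with an associated $W$-invariant dimension-measure $\mu$. By (a), $y_i|_{\cA}$ is injective, so invariance (property (iv) of Definition~\ref{def:dimension-measure}) gives $\mu(y_i\cA)\geq\mu(\cA)=1$; combining this with the monotonicity remark following Definition~\ref{def:dimension-measure} and with (b), we get
\[
1=\mu(\cA)\leq\mu(y_i\cA)\leq\mu(e_i\cA)\leq 1,\qquad i=1,2,
\]
hence $\mu(e_i\cA)=1$ for $i=1,2$. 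Since the sum $e_1\cA+e_2\cA$ is direct by (c), property (ii) of Definition~\ref{def:dimension-measure} then yields the contradiction $1\geq\mu(e_1\cA\oplus e_2\cA)\geq\mu(e_1\cA)+\mu(e_2\cA)=2$.

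I do not anticipate a genuine obstacle here: the statement is essentially an immediate corollary of the dichotomy, the only content being the observation that the data witnessing proper infiniteness places, after left-multiplication, two linearly independent families with trivially intersecting spans inside the ``halves'' $e_1\cA$ and $e_2\cA$ of $\cA$ — a configuration no invariant dimension-measure can tolerate. One may equally argue without dimension-measures via the equivalence (1)$\Leftrightarrow$(2) of Theorem~\ref{theorem:aa-characterized}: for $\cF:=\{y_1,y_2\}$ and any basis $\{f_\lambda\}_{\lambda\in\Lambda}$ of $\cA$, facts (a)--(c) show that $L_1=R_1=\{f_\lambda\}_{\lambda\in\Lambda}$, $L_0=R_0=\varnothing$, $g_1=y_1$, $h_1=y_2$ constitutes a paradoxical decomposition of $\{f_\lambda\}$ by $\cF$, so no basis escapes paradoxical decomposition by $\cF$ and $\cA$ is not algebraically amenable. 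The only points requiring care are the bookkeeping in (a)--(c) and choosing $W$ large enough to contain $y_1$ and $y_2$.
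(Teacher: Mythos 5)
Your proposal is correct and follows essentially the same route as the paper: both invoke the existence of an invariant dimension-measure from Theorem~\ref{theorem:aa-characterized} and derive the contradiction $1\geq\mu(e_1\cA)+\mu(e_2\cA)=2$. The only cosmetic difference is that the paper phrases proper infiniteness via elements $u,u',v,v'$ with $uu'=vv'=\1$ and $vu'=uv'=0$ (deducing $u'\cA\cap v'\cA=0$ directly from these relations), whereas you route the disjointness through the orthogonal idempotents $e_i=y_ix_i$ and the inclusions $y_i\cA\subseteq e_i\cA$.
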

 
 \begin{proof}
 If $\cA$ is properly infinite, it contains elements $u,v,u',v'$ satisfying the relations
 \[
  u u' = v v' = \1_{\cA} \;, \ v u' = 0=uv' \; .
 \]
Suppose that there exists a $\{u,u',v,v'\}$-invariant dimension measure on $\cA$ (associated to some basis). Notice that the first 
set of identities imply that $u'|_\cA$ and $v'|_\cA$ are injective. Thus by invariance, we have
 \[
 1= \mu(\cA) \leq \mu(u'\cA) \leq 1 \; ,
 \]
 which implies $\mu(u'\cA) = \mu(\cA) = 1$, and similarly $\mu(v'\cA) = \mu(\cA) = 1$. On the other hand, for any $a, b \in \cA$ with $u'a = v' b$, we have $b = v v' b = v u' a = 0$ by the second identity. It follows that $u' \cA \cap v' \cA = 0$, and thus $\mu(u' \cA + v' \cA) \geq \mu(u'\cA) + \mu(v'\cA) = 2$, 
 which is an impossible value for $\mu$. This proves our claim. 
\end{proof}

\section{Leavitt algebras and Leavitt path algberas}\label{sec:Leavitt}

In this section we study the amenability of Leavitt algebras and Leavitt path algebras (see below for the specific definitions).
Classical Leavitt algebras were invented by Leavitt (\cite{Leav}, \cite{LeavDuke}) to provide universal examples of algebras without the invariant basis number property.
As such, they cannot be algebraically amenable, by a result of Elek \cite[Corollary 3.1(1)]{Elek03}. Leavitt path algebras provide a wide generalization of classical Leavitt algebras, in much the same way as
graph $C^*$-algebras generalize Cuntz algebras (see e.g. \cite{Raeburn} for an introduction to the theory of graph $C^*$-algebras).

\subsection{Leavitt algebras}

Extending results by Aljadeff and Rosset \cite{AR} and Rowen \cite{Rowen}, Elek proved in \cite{Elek03} that any finitely generated 
unital algebraically amenable $\mathbb{K}$-algebra $\cA$ has the {\em Invariant Basis Number} (IBN) property, 
that is, any finitely generated free $\cA$-module has a well-defined rank. This is equivalent to the condition
$$\cA^n\cong \cA^m \text{~as~left~$\cA$-modules} \implies n=m ,$$
for any positive integers $n,m$. 
We will use the observation in Corollary~\ref{cor:Properly-infinite} to obtain a proof 
of the IBN property of general unital amenable algebras.

\begin{definition}
\label{def:LeavittAlgebras}
Let $\mathbb{K}$ be a field. 
\begin{enumerate}
 \item[(i)] Let $n,m$ be integers such that $1\le m< n$. Then the Leavitt algebra $L(m,n)= L_\mathbb{K}(m,n)$ is the algebra generated by elements $X_{ij}$ and $Y_{ji}$,
for $i=1,\ldots,m$ and $j=1,\ldots,n$, such that $XY = \1_m$ and $YX=
\1_n$, where $X$ denotes the $m\times n$ matrix $(X_{ij})$ and $Y$
denotes the $n\times m$ matrix $(Y_{ji})$.
 \item[(ii)] The algebra $L_{\infty }= L_{\mathbb{K}, \infty}$ is the unital algebra generated by $x_1,y_1,x_2, y_2, \ldots $ subject to the relations 
 $y_jx_i = \delta_{i,j} \1$.
 \end{enumerate}
\end{definition}
 
 The algebras $L(m,n)$ are simple if and only if $m=1$ \cite[Theorems 2 and 3]{LeavDuke}. The algebra $L_{\infty }$ is simple \cite[Theorem 4.3]{AGP}. 
 
 The following is well-known (cf. \cite{Abrams15} or \cite{Leav}):
 
 \begin{proposition} Let $\cA$ be a (nonzero) unital algebra over a field $\mathbb{K}$. 
  \begin{enumerate}
   \item $\cA$  does not satisfy the IBN property if and only if there is a unital homomorphism $L(m,n) \to \cA$ for some $1\le m < n$.
   \item $\cA $ is properly infinite if and only if there is a unital embedding $L_{\infty }\to \cA$.   
  \end{enumerate}
\end{proposition}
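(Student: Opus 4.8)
The plan is to prove each part by unwinding the universal property that defines the relevant Leavitt algebra and translating it into structural data on $\cA$; the one non-routine ingredient, isolated below, is a bootstrapping argument in part~(2). For part~(1), the bridge is the standard dictionary: since $\cA$ is unital, a homomorphism of left $\cA$-modules $\cA^m\to\cA^n$ (regarding elements as row vectors) is exactly right multiplication by a matrix in $M_{m\times n}(\cA)$, and composition becomes matrix multiplication. Given a unital homomorphism $\phi\colon L(m,n)\to\cA$, I would apply $\phi$ entrywise to the defining relations $XY=\1_m$, $YX=\1_n$ to obtain matrices $x:=(\phi(X_{ij}))\in M_{m\times n}(\cA)$ and $y:=(\phi(Y_{ji}))\in M_{n\times m}(\cA)$ with $xy=\1_m$ and $yx=\1_n$; then right multiplication by $x$ and by $y$ are mutually inverse isomorphisms $\cA^m\cong\cA^n$ of left $\cA$-modules, so $\cA$ fails IBN because $m\neq n$. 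Conversely, if $\cA$ fails IBN, choose $p<q$ with $\cA^p\cong\cA^q$ as left $\cA$-modules, write the isomorphism and its inverse as right multiplications by matrices $x\in M_{p\times q}(\cA)$, $y\in M_{q\times p}(\cA)$, note that the module identities force $xy=\1_p$ and $yx=\1_q$, and invoke the universal property of $L(p,q)$ to obtain a unital homomorphism $X_{ij}\mapsto x_{ij}$, $Y_{ji}\mapsto y_{ji}$ into $\cA$.

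For part~(2), the ``if'' direction is quick: a unital homomorphism $\psi\colon L_\infty\to\cA$ sends $e_i:=\psi(x_iy_i)$ for $i=1,2$ to idempotents in $\cA=\1_\cA\cA\1_\cA$ (using $y_ix_i=\1$), which are orthogonal (using $y_1x_2=0=y_2x_1$) and satisfy $e_i\sim\1_\cA$ via the pair $(\psi(x_i),\psi(y_i))$; hence $\1_\cA$ is properly infinite. For the ``only if'' direction I would first record that proper infiniteness of $\cA$ produces elements $s_1,s_2,t_1,t_2\in\cA$ with $t_js_i=\delta_{ij}\1_\cA$ for $i,j\in\{1,2\}$ --- this comes straight from the definition (if $e_i=a_ib_i$ and $\1_\cA=b_ia_i$ with $e_1e_2=e_2e_1=0$, set $s_i=a_i$, $t_i=b_i$ and check that the cross products $t_1s_2$ and $t_2s_1$ vanish), or alternatively from the elements $u,v,u',v'$ appearing in the proof of Corollary~\ref{cor:Properly-infinite}. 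The crucial step is to manufacture infinitely many such ``generators'' out of these two: setting $\hat s_n:=s_2^{n-1}s_1$ and $\hat t_n:=t_1t_2^{n-1}$ for $n\geq1$, a short computation that repeatedly cancels $t_2s_2=\1_\cA$ and uses $t_1s_2=0=t_2s_1$ yields $\hat t_m\hat s_n=\delta_{mn}\1_\cA$ for all $m,n\geq1$. By the universal property of $L_\infty$, the assignment $x_n\mapsto\hat s_n$, $y_n\mapsto\hat t_n$ then extends to a unital homomorphism $L_\infty\to\cA$, and this is automatically an embedding because $L_\infty$ is simple while $\cA\neq0$.

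I expect the main obstacle to be exactly this last bootstrapping: promoting the ``binary'' form of proper infiniteness, $\1_\cA\gtrsim\1_\cA\oplus\1_\cA$, to the full countable system of relations defining $L_\infty$ (note that proper infiniteness does \emph{not} force $e_1+e_2=\1_\cA$, so one cannot simply map in a finite Leavitt algebra $L(1,2)$). Once the correct ansatz $\hat s_n=s_2^{n-1}s_1$, $\hat t_n=t_1t_2^{n-1}$ is written down, the verification $\hat t_m\hat s_n=\delta_{mn}\1_\cA$ is an elementary induction splitting into the cases $m<n$, $m=n$, $m>n$; the substance lies in choosing that ansatz, while all remaining steps are bookkeeping with the module/matrix dictionary and the defining universal properties of $L(m,n)$ and $L_\infty$.
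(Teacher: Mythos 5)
Your proof is correct and follows essentially the same route as the paper's: part (1) is the standard matrix/module dictionary in both, and in part (2) both arguments reduce proper infiniteness to a countable family of mutually orthogonal idempotents equivalent to $\1_\cA$ and then invoke the simplicity of $L_{\infty}$ for injectivity. The only cosmetic difference is that the paper produces that family by an inductive construction, whereas you write it down in closed form via $\hat s_n = s_2^{n-1}s_1$, $\hat t_n = t_1 t_2^{n-1}$; your verification of $\hat t_m \hat s_n = \delta_{mn}\1_\cA$ and of the vanishing cross products $t_1 s_2 = t_2 s_1 = 0$ is sound.
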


\begin{proof}
(1) By definition, if an algebra $\cA$ does not have the IBN property, then
 there are $m,n$ with $1\le m <n$ such that $\cA ^m \cong \cA ^n$, and this isomorphism of free modules will be implemented by matrices $X'\in M_{m\times n}(\cA)$ and $Y' 
 \in M_{n\times m} (\cA )$ such that $X'Y'=I_m$ and $Y'X'= I_n$. We thus obtain a unital homomorphism $L(m,n) \to \cA$. The converse is trivial.

(2) If $\cA$ is properly infinite, we may inductively find an infinite sequence $e_1, e_2,\ldots $ of mutually orthogonal idempotents such that $e_i\sim 1$ for all $i$.
This enables us to define a homomorphism $L_{\infty } \to \cA$ which is injective because $L_{\infty }$ is simple. The converse is obvious. 
 \end{proof}

Note that $L_{\infty}$ is properly infinite but does have the IBN property. 

\begin{proposition}
 \label{prop:amen-implies-IBN} If $\cA$ is a unital algebraically amenable algebra, then $\cA$ has the IBN property.
 \end{proposition}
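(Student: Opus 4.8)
The plan is to combine the structural characterization of non-IBN algebras (part (1) of the preceding Proposition) with Corollary~\ref{cor:Properly-infinite}. Suppose, for contradiction, that $\cA$ is a unital algebraically amenable algebra that does not have the IBN property. By part (1) of the Proposition, there exist integers $1 \le m < n$ and a unital homomorphism $\varphi \colon L(m,n) \to \cA$. The key observation is that the Leavitt algebra $L(m,n)$, for $m<n$, carries a properly infinite unit: writing $X = (X_{ij})$ and $Y = (Y_{ji})$ for the defining $m\times n$ and $n\times m$ matrices with $XY = \1_m$, $YX = \1_n$, one can exhibit inside $L(m,n)$ a unital copy of $L_\infty$ (using the Proposition's part (2), or directly), hence $\1_{L(m,n)}$ is properly infinite. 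Transporting along the unital homomorphism $\varphi$, the image $\varphi(\1_{L(m,n)}) = \1_\cA$ remains properly infinite in $\cA$. But then Corollary~\ref{cor:Properly-infinite} says $\cA$ is not algebraically amenable, contradicting our hypothesis.

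The only point that requires a small argument is \emph{why $L(m,n)$ is properly infinite when $m < n$}. Here I would argue as follows: since $m<n$, the free module $L(m,n)^n \cong L(m,n)^m$ as left modules, and iterating (or simply using $n \ge m+1$) gives $L(m,n)^{m} \cong L(m,n)^{n} \cong L(m,n)^{m} \oplus L(m,n)^{n-m}$, and since $n - m \ge 1$ this yields a decomposition $L(m,n)^m \cong L(m,n)^m \oplus L(m,n)$, i.e. in $M_m(L(m,n))$ the identity $I_m$ is Murray--von~Neumann equivalent to $I_m \oplus p$ for a nonzero idempotent $p$; because $L(1,n)$ sits inside as the $(1,1)$-corner behavior already shows $\1$ dominates two orthogonal copies of itself, one concludes $\1_{L(m,n)} \gtrsim \1 \oplus \1$. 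Alternatively, and perhaps more transparently, in the rows of $X$ and columns of $Y$ one directly reads off, for $i \in \{1,\dots,n\}$, elements $x_i$ (the $i$-th column of $Y$, viewed suitably) and $y_i$ with $y_i x_j = \delta_{ij}\1$; taking $e_i := x_i y_i$ gives mutually orthogonal idempotents each equivalent to $\1$, which is exactly proper infiniteness (and in fact produces a unital embedding $L_\infty \hookrightarrow L(m,n)$, reproving the relevant half of the Proposition's part (2) in this special case).

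The main obstacle, such as it is, lies in this bookkeeping: making sure the relations $XY = \1_m$, $YX = \1_n$ with $m < n$ genuinely yield $\ge 2$ mutually orthogonal idempotents equivalent to $\1$, rather than merely equivalence of free modules of \emph{unequal} rank. The cleanest route is to pass through $L(1,n)$: the canonical surjection-free situation for $m=1$ gives generators $y_j x_i = \delta_{ij}\1$ for $i,j \in \{1,\dots,n\}$ with $n \ge 2$, so $e_1 := x_1 y_1$ and $e_2 := x_2 y_2$ are orthogonal idempotents with $e_k \sim \1$; and one checks that $L(1,n)$ embeds unitally in $L(m,n)$ whenever $m < n$ (for instance, any row of the relation $XY=\1_m$ together with a compatible column of $Y$ generates such a copy, or one invokes known structural facts about Leavitt algebras from \cite{LeavDuke}). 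Once proper infiniteness of $L(m,n)$ is in hand, the rest of the argument is the one-line application of Corollary~\ref{cor:Properly-infinite} described above.
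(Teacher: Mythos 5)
There is a genuine gap: your argument hinges on the claim that $L(m,n)$ is properly infinite (equivalently, that $L(1,n)$ embeds unitally into $L(m,n)$) whenever $1\le m<n$, and this is false for $m\ge 2$. By Bergman's computation, the monoid of isomorphism classes of finitely generated projective modules over $L(m,n)$ is $\langle x \mid mx=nx\rangle$ with $x=[L(m,n)]$; proper infiniteness of the unit would require $x = 2x+q$ for some $q$, which forces $m=1$. So, for instance, $L(2,3)$ satisfies $R^2\cong R^3$ but $\1_{L(2,3)}\not\gtrsim \1\oplus\1$, and consequently no $L(1,n')$ with $n'\ge 2$ can embed unitally into it. Your ``bookkeeping'' paragraph correctly identifies this as the crux (unequal free ranks versus two orthogonal copies of $\1$), but the proposed resolution via the $(1,1)$-corner does not go through: the relations $XY=\1_m$, $YX=\1_n$ are matrix identities and for $m\ge 2$ they do not produce elements $x_i, y_i$ with $y_ix_j=\delta_{ij}\1$. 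Your module computation only yields $I_m \gtrsim I_m \oplus p$ \emph{inside $M_m(L(m,n))$}, i.e.\ infiniteness of $I_m$ there, not proper infiniteness of $\1$ in $L(m,n)$ itself. This is consistent with the paper's own remark that proper infiniteness is strictly stronger than non-amenability/paradoxicality in general.

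The paper's proof repairs exactly this point by working in the matrix amplification: from $\cA^m\cong\cA^n$ one gets $M_n(\cA)\cong M_m(\cA)$, and iterating $n[\cA]=m[\cA]$ shows that $M_n(\cA)$ \emph{is} properly infinite; Corollary~\ref{cor:Properly-infinite} then gives that $M_n(\cA)$ is not algebraically amenable, and one concludes using the additional ingredient that algebraic amenability passes to matrix amplifications, $M_n(\cA)\cong \cA\otimes M_n(\K)$ (cited from \cite[Proposition~4.3(2)]{Cec-Sam-08}). Your argument is correct in the special case $m=1$, but the general non-IBN case cannot be reduced to $m=1$, so the detour through $M_n(\cA)$ (or some equivalent device) is genuinely needed.
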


\begin{proof}
 Suppose that $\cA$ does not have the IBN property. Then there are integers $m,n$ with $1\le m <n$ and there is a unital homomorphism
 $L(m,n) \to  \cA$. Now $M_n(\cA)\cong M_m(\cA)$ is properly infinite, so that by Corollary~\ref{cor:Properly-infinite}, $M_n(\cA )$ is not algebraically amenable.
 If $\cA$ were amenable then $M_n(\cA) \cong \cA \otimes M_n(\mathbb{K})$ would be amenable too (\cite[Proposition 4.3(2)]{Cec-Sam-08}). Therefore $\cA$ is 
 not algebraically amenable, showing the result.
 \end{proof}

 \begin{corollary}\label{cor:Leavitt-alg-amen}
 A unital $\mathbb{K}$-algebra $\cA$ that unitally contains the Leavitt algebra $L(m,n) $ for some $1\le m < n$ is not algebraically amenable. \qed
\end{corollary}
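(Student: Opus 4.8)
The plan is to derive Corollary~\ref{cor:Leavitt-alg-amen} directly from Corollary~\ref{cor:Properly-infinite} together with the matrix-amenability permanence fact used in the proof of Proposition~\ref{prop:amen-implies-IBN}. The starting observation is that if $\cA$ unitally contains $L(m,n)$ for some $1\le m<n$, then by the very definition of $L(m,n)$ there are rectangular matrices $X'\in M_{m\times n}(\cA)$ and $Y'\in M_{n\times m}(\cA)$ with $X'Y'=I_m$ and $Y'X'=I_n$. Viewing these inside the square matrix algebras, $X'$ and $Y'$ witness $M_m(\cA)\cong M_n(\cA)$ as left $\cA$-modules (equivalently, as algebras via the induced isomorphism), so in particular $M_n(\cA)$ is isomorphic to $M_m(\cA)$, which is a direct summand (a corner) of $M_n(\cA)$ via an idempotent equivalent to $\1_{M_n(\cA)}$. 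This exhibits $\1_{M_n(\cA)}$ as Murray--von~Neumann equivalent to an idempotent $e\in M_n(\cA)$ with a complementary copy, hence $M_n(\cA)$ is properly infinite.

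Next I would invoke Corollary~\ref{cor:Properly-infinite}: since $M_n(\cA)$ is a properly infinite unital $\mathbb{K}$-algebra, it is not algebraically amenable. The final step is to transfer non-amenability from $M_n(\cA)$ back to $\cA$. For this I would use the identification $M_n(\cA)\cong \cA\otimes_{\mathbb K} M_n(\mathbb K)$ and the permanence property cited in the proof of Proposition~\ref{prop:amen-implies-IBN}, namely \cite[Proposition~4.3(2)]{Cec-Sam-08}, which says that algebraic amenability of $\cA$ implies algebraic amenability of $\cA\otimes M_n(\mathbb K)$. By contraposition, since $M_n(\cA)$ is not algebraically amenable, neither is $\cA$, which is exactly the claim.

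Since all the heavy lifting has already been done upstream, there is essentially no obstacle here; the only point requiring a line of care is the reduction showing $M_n(\cA)$ is properly infinite. One clean way to phrase it: the element $e := Y'X' \in M_n(\cA)$ equals $I_n$ on the nose from the $L(m,n)$ relations, so one instead argues via the block structure — embed $M_m(\cA)$ into $M_n(\cA)$ as the upper-left corner $p M_n(\cA) p$ with $p = I_m\oplus 0$, and note $X', Y'$ give $\1_{M_n(\cA)}\sim p' := X'Y'$ computed inside $M_n(\cA)$ after suitable padding, so that $\1_{M_n(\cA)}$ dominates two orthogonal copies of itself. Alternatively, and more economically, one observes that $M_m(\cA)\cong M_n(\cA)$ together with $n>m$ forces failure of IBN for $\cA$, hence by the preceding Proposition a unital homomorphism $L(m,n)\to \cA$, and the argument of Proposition~\ref{prop:amen-implies-IBN} applies verbatim — indeed Corollary~\ref{cor:Leavitt-alg-amen} is really just the contrapositive restatement of Proposition~\ref{prop:amen-implies-IBN} combined with the Proposition characterizing IBN failure. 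I expect the proof in the paper to be a one-line ``\qed'' pointing at exactly this, which is why the statement is given without a displayed proof.
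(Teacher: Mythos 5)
Your proof is correct and follows exactly the paper's intended route: the corollary is stated with a bare \qed because it is the immediate combination of the proposition characterizing IBN failure via unital homomorphisms $L(m,n)\to\cA$ with Proposition~\ref{prop:amen-implies-IBN} (whose proof already contains the proper-infiniteness of $M_n(\cA)$ and the appeal to \cite[Proposition~4.3(2)]{Cec-Sam-08} that you reproduce). Your closing observation that this is just the contrapositive restatement is precisely right.
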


\subsection{Leavitt path algebras}

In general, a non-algebraically amenable algebra need not be properly infinite, as the non-commutative free algebra shows.
We now show that, within a certain class of algebras, the class of {\it Leavitt path algebras}, both properties are indeed equivalent.  
Note that this class of algebras includes the algebras $L(1,n)$ and $L_{\infty }$ as distinguished members. (The algebras $L(m,n)$, with $1<m<n$ are
not included in the class of Leavitt path algebras, but they are Morita-equivalent to Leavitt path algebras associated to separated graphs \cite{AG12}.)
We refer the reader to \cite{Abrams15} and the references therein for more information about Leavitt path algebras.
 
We recall some definitions needed here.

\begin{definition}
 A (directed) graph $E=(E^{0},E^{1},r,s)$ consists of two sets $E^{0}$ and $E^{1}$ together with range and source maps $r,s:E^{1}\rightarrow E^{0}$. The elements of $E^{0}$ are called \emph{vertices} and the elements of $E^{1}$
\emph{edges}. 

 A vertex $v$ is called a \emph{sink} if it emits no edges, that is, $s^{-1}(v)=\emptyset$, the empty set. The vertex $v$ is called a \emph{finite emitter} if $s^{-1}(v)$ is finite; otherwise it is an \emph{infinite emitter}. A finite emitter which is not a sink is also called a \emph{regular vertex}. For each $e\in E^{1}$, we call $e^{\ast}$ a \emph{ghost edge}. We let $r(e^{\ast})$ denote $s(e)$, and we let $s(e^{\ast})$ denote $r(e)$. 
\end{definition}

The Leavitt path algebras are built on top of these directed graphs.

\begin{definition}\label{def:Leavitt-path-alg}
 Given an arbitrary graph $E$ and a field $\mathbb{K}$, the \emph{Leavitt path $\mathbb{K}$-algebra }$L_{\mathbb{K}}(E)$ (or simply $L(E)$) is defined to be the $\mathbb{K}$-algebra generated by a set $\{v:v\in E^{0}\}$ of pairwise orthogonal idempotents together with a set of variables $\{e,e^{\ast}:e\in E^{1}\}$ which satisfy the following conditions:
 \begin{enumerate}
  \item \ $s(e)e=e=er(e)$ for all $e\in E^{1}$.
  \item $r(e)e^{\ast}=e^{\ast}=e^{\ast}s(e)$\ for all $e\in E^{1}$.
  \item (The ``CK-1 relations") For all $e,f\in E^{1}$, $e^{\ast}e=r(e)$ and $e^{\ast}f=0$ if $e\neq f$.
  \item (The ``CK-2 relations") For every regular vertex $v\in E^{0}$,
  \[
   v=\sum_{e\in E^{1},s(e)=v}ee^{\ast}.
  \]
 \end{enumerate}
\end{definition}

In a sense, the definition of a Leavitt path algebra treats the graph as a dynamical system: its multiplication is based on the ways one can traverse
the vertices of the graph via the edges. This naturally brings into the picture notions such as paths and cycles. 

\begin{definition}
 A \emph{(finite) path} $\mu$ of length $n>0$ is a finite sequence of edges $\mu=e_{1}e_{2}\cdot\cdot\cdot e_{n}$ with $r(e_{i})=s(e_{i+1})$ for all $i=1,\cdot\cdot\cdot,n-1$. In this case, $\mu^{\ast}=e_{n}^{\ast}\cdot\cdot\cdot e_{2}^{\ast}e_{1}^{\ast}$ is the corresponding ghost path. The set of all vertices on the path $\mu$ is denoted by $\mu^{0}$. Any vertex $v$ is considered a path of length $0$.

A non-trivial path $\mu$ $=e_{1}\dots e_{n}$ in $E$ is \emph{closed} if $r(e_{n}%
)=s(e_{1})$, in which case $\mu$ is said to be \emph{based at} the vertex $s(e_{1})$. By cyclically permuting the edges of a closed path $\mu=e_{1}\dots e_{n}$, we obtain a closed path $e_{k}\dots e_{n}e_{1}\dots e_{k-1}$ based at the vertex $s(e_{k})$ for any $k = 1, \ldots, n$.
A closed path $\mu$ as above is called \emph{simple} provided it does not
pass through its base more than once, i.e., $s(e_{i})\neq s(e_{1})$ for all
$i=2,...,n$. 

The closed path $\mu$ is called a \emph{cycle based at} $v$ 
if $s(e_1)=v$ and it does not
pass through any of its vertices twice, that is, if $s(e_{i})\neq s(e_{j})$
whenever $i\neq j$. A nontrivial cyclic permutation of a cycle based at a vertex $v$ is then a cycle based at a different vertex. Cyclic permutation thus induces an equivalence relation on the set of all cycles based at vertices. An equivalence class of it is called a \emph{cycle}. Note that it is meaningful to talk about the set of vertices of a cycle, which we denote by $c^0$. 
A cycle $c$ is called an \emph{exclusive cycle} if it
is disjoint with every other cycle; equivalently, no
vertex $v$ on $c$ is the base of a different cycle other than the cyclic permutation of $c$ based at $v$.
\end{definition}

The following lemma was shown in the row-finite case in \cite[Lemma 7.3]{AGPS}. We include the identical proof for completeness.

\begin{lemma}\label{vInfiniteIdempotent} Let $E$ be an arbitrary graph and let $\mathbb{K}$ be a field. 
If $v\in E^0$ belongs to a non-exclusive cycle, then $v$ is a properly
infinite idempotent in $L_\K(E)$.
\end{lemma}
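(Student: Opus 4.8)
The plan is to extract from the non-exclusive cycle through $v$ two closed paths $\mu_1,\mu_2$ in $E$, both based at $v$, that ``diverge early enough'' so that $\mu_1^*\mu_2=0=\mu_2^*\mu_1$; the elements $e_i:=\mu_i\mu_i^*$ will then be orthogonal idempotents in $vL_\K(E)v$, each Murray-von~Neumann equivalent to $v$, which is exactly what it means for $v$ to be properly infinite.

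First I would fix a representative cycle $c=e_1\cdots e_n$ based at $v$, so $s(e_1)=v=r(e_n)$ and $c$ visits no vertex twice. By the characterization of exclusivity stated alongside the definition of a cycle, non-exclusivity of $c$ means that some vertex $w$ on $c$, say $w=s(e_j)$, is the base of a cycle $d=g_1\cdots g_m$ that is \emph{not} the cyclic permutation $c':=e_j\cdots e_n e_1\cdots e_{j-1}$ of $c$ based at $w$. Set $p:=e_1\cdots e_{j-1}$ (the path along $c$ from $v$ to $w$, interpreted as the trivial path at $v$ when $j=1$) and $q:=e_j\cdots e_n$ (the path along $c$ from $w$ to $v$), so $c=pq$. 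I will take
\[
\mu_1:=c^2=p\,c'\,q\qquad\text{and}\qquad\mu_2:=p\,d\,q,
\]
both closed paths based at $v$ (using $r(p)=s(c')=s(d)=w=r(c')=r(d)=s(q)$).

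The one genuinely combinatorial point---and the place I expect the only real difficulty---is that $c'$ and $d$ might share an initial segment, so one must locate where they first differ. Since $c'$ and $d$ are \emph{distinct} cycles based at the same vertex $w$, neither is a prefix of the other: if the shorter were a prefix of the longer, the longer cycle would revisit $w$ strictly before finishing, contradicting that a cycle repeats no vertex. Hence $c'$ and $d$ have a longest common prefix $\sigma$ (a path starting at $w$, possibly trivial), after which $c'$ continues with an edge $f$ and $d$ with an edge $g$, where $f\neq g$ and $s(f)=s(g)=r(\sigma)$. Writing $c'=\sigma f\gamma$ and $d=\sigma g\delta$, we get $\mu_1=p\sigma f\gamma q$ and $\mu_2=p\sigma g\delta q$.

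The rest is routine manipulation with the defining relations. Recall that, by relations (1)--(2) and the CK-1 relations, any path $\rho$ in $E$ satisfies $\rho^*\rho=r(\rho)$, $s(\rho)\rho=\rho=\rho\,r(\rho)$ and $\rho^* s(\rho)=\rho^*=r(\rho)\rho^*$, and that $f^* g=0$ whenever $f\neq g$. Hence
\[
\mu_1^*\mu_2=q^*\gamma^*f^*\sigma^*(p^*p)\sigma g\delta q=q^*\gamma^*f^*(\sigma^*\sigma)g\delta q=q^*\gamma^*(f^*g)\delta q=0,
\]
and symmetrically $\mu_2^*\mu_1=0$; also $\mu_i^*\mu_i=r(\mu_i)=v$ and $v\mu_i=\mu_i=\mu_i v$. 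Finally I would put $e_i:=\mu_i\mu_i^*$ and check: $e_i^2=\mu_i(\mu_i^*\mu_i)\mu_i^*=\mu_i v\mu_i^*=e_i$, so $e_i$ is idempotent; $v e_i v=(v\mu_i)(\mu_i^* v)=e_i$, so $e_i\in vL_\K(E)v$; with $x=\mu_i$, $y=\mu_i^*$ we get $xy=e_i$ and $yx=\mu_i^*\mu_i=v$, so $e_i\sim v$; and $e_1e_2=\mu_1(\mu_1^*\mu_2)\mu_2^*=0=\mu_2(\mu_2^*\mu_1)\mu_1^*=e_2e_1$. Thus $e_1,e_2$ are orthogonal idempotents in $vL_\K(E)v$ with $e_1\sim v\sim e_2$, so $v$ is a properly infinite idempotent.
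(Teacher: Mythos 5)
Your proof is correct and is essentially the argument in the paper: both hinge on locating the first edge at which two closed paths through $v$ diverge and then invoking the CK-1 relation $f^*g=0$ to produce two orthogonal copies of $v$. The only difference is presentational --- you conjugate everything back to $v$ and exhibit the orthogonal idempotents $\mu_i\mu_i^*\sim v$ explicitly (and, usefully, you spell out the reduction from ``$v$ lies on a non-exclusive cycle'' to ``two closed paths based at $v$ that diverge''), whereas the paper performs the same comparison at the divergence vertex via the relation $\gtrsim$ and transports it along the cycle.
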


\begin{proof}
 We would like to show that $v \gtrsim v \oplus v$. To this end, let $e_1\dots e_m$ and $f_1\dots f_n$ be two different closed simple paths in $E$ based at $v$. Then there is some positive integer $t$ such that $e_i=f_i$ for $i=1,\ldots,t-1$ while $e_t\ne f_t$. Thus, 
 we have $s(e_{t})= s(f_{t})$ but $e_{t} \ne f_{t}$. We observe
 \[
  v = s(e_1) \gtrsim r(e_1) = s(e_2) \gtrsim \ldots \gtrsim r(e_{t-1}) = s(e_{t}) \; ,
 \]
 and similarly $r(e_t) \gtrsim r(e_m) = v$ and $r(f_t) \gtrsim r(f_n) = v$. Since $e_t e_t^*$ and $f_t f_t^*$ are two mutually orthogonal idempotents below $s(e_{t})$, we have 
 \[
  v \gtrsim s(e_{t})  \gtrsim e_t e_t^* \oplus f_t f_t^* \sim e_t^* e_t \oplus f_t^* f_t = r(e_t)\oplus r(f_t) \gtrsim  v\oplus v \; .
 \]

 Therefore $v$ is properly infinite. 
\end{proof}

Below we summarize some additional basic terminologies and properties for graphs and Leavitt path algebras. For this we follow the book in preparation \cite{AAS}.

\begin{remark}\label{rmk:Leavitt-path-algebras}
 Let $E$ be a directed graph.
 \begin{enumerate}
  \item\label{rmk:Leavitt-path-algebras:preorder} If there is a path from a vertex $u$ to a vertex $v$, we write $u\geq v$. This defines a pre-order
  on $E^0$. As we have shown above, $u\geq v$ implies $u \gtrsim v$ in $L_{\mathbb{K}}(E)$. Since all vertices on a cycle are equivalent with regard
  to the pre-order $\geq$, it induces a pre-order on the set of all cycles, so that for any cycles $c_1$ and $c_2$, we have $c_1\geq c_2$ if and only
  if there is path from a vertex of $c_1$ to a vertex of $c_2$. 
 
 \item\label{rmk:Leavitt-path-algebras:equivalent-cycles} Let $C$ be the set of all cycles in $E$. 
 Let $C/{\sim} $ be the partially ordered set obtained by antisymmetrization of the pre-order $\le $ on $C$, so that $c\sim c'$ if and only if
 $c \le c'$ and $c'\le c$. Note that the exclusive cycles are precisely those cycles $c$ such that $[c]= \{ c \}$, and that $C/{\sim} $ is a finite set if $E$ has a finite number of vertices.
 
 \item\label{rmk:Leavitt-path-algebras:unital} The Leavitt path algebra $L_\K(E)$ is unital if and only if $| E^0 | < \infty$, in which case the unit is given by $\sum_{v \in E^0} v$. 
 
 \item\label{rmk:Leavitt-path-algebras:paths-elements} Every finite path $\mu = e_1 \cdots e_n$ induces the elements $\mu = e_1 \cdots e_n$ and $\mu^{\ast}=e_{n}^{\ast}\cdots e_{1}^{\ast}$ in $L_{\mathbb{K}}(E)$. By a simple induction, we see that the Leavitt path algebra $L_\K(E)$ is linearly spanned by terms of the form $\lambda \rho ^*$, where $\lambda $ and $\rho$ are paths such that $r(\lambda )= r(\rho )$. 
 
 \item\label{rmk:Leavitt-path-algebras:finite-acyclic} The graph $E$ is called \emph{acyclic} if it contains no cycle, and \emph{finite} if both $E^0$ and $E^1$ are finite sets. A finite acyclic graph clearly contains finitely many paths. Thus by (\ref{rmk:Leavitt-path-algebras:paths-elements}), we see that $L_{\mathbb{K}}(E)$ is finite-dimensional. In fact, in this case, $L_{\mathbb{K}}(E)$ is a finite direct sum of matrix algebras over $\K$ (cf., \cite[Theorem~3.1]{Abrams15}). 
 
 \item\label{rmk:Leavitt-path-algebras:hereditary-saturated} A subset $H$ of $E^{0}$ is called \emph{hereditary} if, whenever $v\in H$ and
$w\in E^{0}$ satisfy $v\geq w$, then $w\in H$. A hereditary set is
\emph{saturated} if, for any regular vertex $v$, $r(s^{-1}(v))\subseteq H$
implies $v\in H$. For $X\subseteq E^0$, we denote by $\overline{X}$ the
hereditary saturated closure of $X$. To compute $\overline{X}$, one can first compute the {\it tree}
of $X$, $T(X) := \{ w\in E^0 : w\le v \text{ for some } v\in X \}$, which is the smallest hereditary subset of $E^0$ containing $X$, and then, setting $\Lambda _0(T(X)) := T(X)$, compute inductively
\[
 \Lambda _n(T(X)) := \{y\in E^0_{\mathrm{reg}}  : r(s^{-1}(y))\subseteq \Lambda
_{n-1}(T(X))\} \cup \Lambda _{n-1}(T(X))
\]
for $n = 1, 2 , \ldots$, where $E^0_{\mathrm{reg}}$ is the set of regular vertices. It is easy to see $\ol{X} = \bigcup _{n=0}^{\infty} \Lambda
_n(T(X))$.

 \item\label{rmk:Leavitt-path-algebras:ideals} We shall use the following constructions from \cite{tomforde}. A \emph{breaking vertex }of a hereditary saturated subset $H$
is an infinite emitter $w\in E^{0}\setminus H$ with the property that
$1\leq|s^{-1}(w)\cap r^{-1}(E^{0}\setminus H)|<\infty$. The set of all
breaking vertices of $H$ is denoted by $B_{H}$. For any $v\in B_{H}$, we define an idempotent $v^{H} \in L_{\mathbb{K}}(E)$ by
\[
 v^{H} := v-\sum_{s(e)=v,r(e)\notin H}ee^{\ast} \; .
\]
Given a hereditary saturated subset $H$ and a subset $S\subseteq B_{H}$, $(H,S)$ is
called an \emph{admissible pair.} Given an
admissible pair $(H,S)$, $I(H,S)$ denotes the ideal generated by
$H\cup\{v^{H}:v\in S\}$. Then we have an isomorphism $L_{\mathbb{K}}(E)/I(H,S)\cong L_{\mathbb{K}}%
(E / (H,S))$. Here $E / (H,S)$ is the \emph{quotient graph} of $E$ in which \emph{ }$(E / (H,S))^{0}=(E^{0}\backslash H)\cup
\{v^{\prime}:v\in B_{H}\backslash S\}$ and $(E / (H,S))^{1}=\{e\in
E^{1}:r(e)\notin H\}\cup\{e^{\prime}:e\in E^{1},r(e)\in B_{H}\backslash S\}$
and $r,s$ are extended to $(E / (H,S))^{1}$ by setting $s(e^{\prime
})=s(e)$ and $r(e^{\prime})=r(e)^{\prime}$. 
Thus when $S=B_{H}$, 
we can identify the quotient graph $E \setminus (H,B_{H})$ with the subgraph $E / H$ of $E$, where $(E / H)^0= E^0\setminus H$ and
$(E / H)^1= \{ e\in E^1 : r(e) \notin H \}$. It was shown in \cite{tomforde} that the graded ideals of
$L_{\mathbb{K}}(E)$ are precisely the ideals of the form $I(H,S)$ for some
admissible pair $(H,S)$, though we will not make use of this. 

 \item\label{rmk:Leavitt-path-algebras:full-subgraph} A subgraph $E'$ of $E$ is called \emph{full} if $(E')^1 = \{e \in E^{1}:s(e), r(e) \in (E')^0 \}$. For a subset $X \subset E^0$, we define a full subgraph $M(X)$ so that 
 \[
  M(X)^0=\{w\in E^{0}:w\geq v \text{~for~some~} v \in X \} \; .
 \]
 If $X = \{v\}$ for some $v \in E^0$, we also write $M(v) = M(\{v\})$.  
 Also define 
 \[
  H(v)=E^{0}\setminus M(v)^0 \; ,
 \]
 which is hereditary by design. Note that any edge $e$ is in a cycle if and only if $r(e) \notin H(s(e))$ if and only if $r(e) \in M(s(e))^0$. It
 follows that if $v$ belongs to a cycle, then $H(v)$ is a hereditary saturated subset of $E$.
 \qed
 \end{enumerate}

\end{remark}

\begin{theorem}
 \label{thm:amenLPAs} Let $E$ be a nontrivial directed graph and let $\mathbb{K}$ be a field. Let $H$ be the smallest hereditary saturated subset of $E^0$ that contains all the cycles of $E$. Order the vertices and the cycles by the preorder defined in Remark~\ref{rmk:Leavitt-path-algebras} (\ref{rmk:Leavitt-path-algebras:preorder}). Then we have the following three sets of equivalent conditions:
 {
 \begin{itemize}
  \item The following are equivalent:
        \renewcommand{\theenumi}{\Alph{enumi}1}
        \begin{enumerate}
         \item\label{thm:amenLPAs:A1} $L_\K(E)$ is not algebraically amenable.
         \item\label{thm:amenLPAs:B1} $E^0$ is finite, $E^0 \setminus H = \varnothing$, and every maximal cycle is non-exclusive. 
         \item\label{thm:amenLPAs:C1} $L_\K(E)$ is unital and properly infinite
        \end{enumerate}
  \item The following are equivalent:
        \renewcommand{\theenumi}{\Alph{enumi}2}
        \begin{enumerate}
         \item\label{thm:amenLPAs:A2} $L_\K(E)$ is algebraically amenable but not properly algebraically amenable.
         \item\label{thm:amenLPAs:B2} $E^0$ is finite, $E$ is not acyclic, $E^0 \setminus H$ consists of a nonzero number of finite emitters, and every maximal cycle is non-exclusive. 
         \item\label{thm:amenLPAs:C2} $L_\K(E) = L_\K(E') \oplus L_\K(E'') $ for some directed graphs $E'$ and $E''$ such that $L_\K(E') $ has nonzero finite dimension and $L_\K(E'') $ is not algebraically amenable.
        \end{enumerate}
  \item The condition 
        \renewcommand{\theenumi}{\Alph{enumi}3}
        \begin{enumerate}
         \item\label{thm:amenLPAs:A3} $L_\K(E)$ is properly algebraically amenable
        \end{enumerate}
        holds if and only if one or more of the following conditions hold:
        \renewcommand{\theenumi}{B3\alph{enumi}} 
        \begin{enumerate}
	     \item\label{thm:amenLPAs:B3:acyclic} $E$ is acyclic;
         \item\label{thm:amenLPAs:B3:infinite} $E^0$ is infinite;
         \item\label{thm:amenLPAs:B3:emitter} $E^0 \setminus H$ contains at least one infinite emitter;
         \item\label{thm:amenLPAs:B3:exclusive} $E$ has an exclusive maximal cycle. 
        \end{enumerate}
 \end{itemize}
 }

 \end{theorem}

\begin{proof}
 Write (B3) for the inclusive disjunction \eqref{thm:amenLPAs:B3:acyclic}$\vee$\eqref{thm:amenLPAs:B3:infinite}$\vee$\eqref{thm:amenLPAs:B3:emitter}$\vee$\eqref{thm:amenLPAs:B3:exclusive}. We first observe that it suffices to show \eqref{thm:amenLPAs:B1} $\Rightarrow$ \eqref{thm:amenLPAs:C1}, \eqref{thm:amenLPAs:B2} $\Rightarrow$ \eqref{thm:amenLPAs:C2}, and (B3) $\Rightarrow$ \eqref{thm:amenLPAs:A3}. Indeed, by Corollary~\ref{cor:Properly-infinite}, we have \eqref{thm:amenLPAs:C1} $\Rightarrow$ \eqref{thm:amenLPAs:A1}, while by Example~\ref{eg:alg-amen-left-ideal} and Proposition~\ref{pro:quotient-proper}, we have \eqref{thm:amenLPAs:C2} $\Rightarrow$ \eqref{thm:amenLPAs:A2}. Notice that the three conditions \eqref{thm:amenLPAs:A1}, \eqref{thm:amenLPAs:A2} and \eqref{thm:amenLPAs:A3} are mutually exclusive, while the three conditions \eqref{thm:amenLPAs:B1}, \eqref{thm:amenLPAs:B2} and (B3) exhaust all possible situations. It thus follows from basic logic that the three converse implications also hold, i.e., we have the full cycles 
 \begin{itemize}
  \item \eqref{thm:amenLPAs:B1} $\Rightarrow$ \eqref{thm:amenLPAs:C1} $\Rightarrow$ \eqref{thm:amenLPAs:A1} $\Rightarrow$ \eqref{thm:amenLPAs:B1}, 
  \item \eqref{thm:amenLPAs:B2} $\Rightarrow$ \eqref{thm:amenLPAs:C2} $\Rightarrow$ \eqref{thm:amenLPAs:A2} $\Rightarrow$ \eqref{thm:amenLPAs:B2}, and 
  \item (B3) $\Rightarrow$ \eqref{thm:amenLPAs:A3} $\Rightarrow$ (B3).
 \end{itemize}
 We proceed now with the proofs of the three essential implications we need.
 
 \eqref{thm:amenLPAs:B1} $\Rightarrow$ \eqref{thm:amenLPAs:C1}: The unitality of $L_\K(E)$ follows directly from the finiteness of $E^0$ by Remark~\ref{rmk:Leavitt-path-algebras}(\ref{rmk:Leavitt-path-algebras:unital}). Now let $[c_1], \ldots , [c_n]$ be the maximal elements 
 of $C/{\sim}$, and pick a vertex $v_i$ in each cycle $c_i$. Since each $c_i$ is non-exclusive, by Lemma~\ref{vInfiniteIdempotent}, each $v_i$ is a properly infinite idempotent, that is, $v_i\oplus v_i \lesssim v_i $. Since $\1= \sum _{v\in E^0} v$, to show that $\1$ is properly infinite, it suffices to check that $v\lesssim p:= \sum_{i=1}^n v_i$ for all $v\in E^0$. Set $X= \{v_1,\ldots , v_n \}$. By our assumption, $E^0 = H = \ol{X}$ and $E^0$ is finite; thus there is some $k$ such that $E^0 = \Lambda _k (T(X))$. We show by induction on $r \in \N_0$ that $v\lesssim p$ for all $v\in \Lambda _r (T(X))$. For $r=0$, we have that $v\in T(X)$ and thus $v\le v_i$ for some $i$, which implies that $v\lesssim v_i \le p$. If $v\in \Lambda_r(T(X))\setminus \Lambda_{r-1} (T(X))$, then $v$ is a regular vertex and, for any $e\in s^{-1}(v)$, we have $r(e)\in \Lambda _{r-1}(T(X))$, and thus $r(e) \lesssim p$ by the induction hypothesis. Hence 
  $$v= \sum_{e\in s^{-1} (v)} ee^* \sim \bigoplus _{e\in s^{-1}(v)} r(e) \lesssim    p ^{\oplus |s^{-1}(v)|} \lesssim p,$$
 because $p$ is properly infinite. This shows that $v\lesssim p$ for all $v\in  \Lambda_r(T(X))$, completing the induction step. Therefore $\1 \oplus \1 \lesssim \1$, i.e., $L_\K(E)$ is properly infinite.
 
 \eqref{thm:amenLPAs:B2} $\Rightarrow$ \eqref{thm:amenLPAs:C2}: Define $E' = E/H$ and $E'' = M(H)$ (cf., Remark~\ref{rmk:Leavitt-path-algebras}(\ref{rmk:Leavitt-path-algebras:ideals}) 
 and (\ref{rmk:Leavitt-path-algebras:full-subgraph})). It follows from the assumptions that $E'$ has finitely many vertices and edges while $B_H= \varnothing$. By our notation 
 in Remark~\ref{rmk:Leavitt-path-algebras}(\ref{rmk:Leavitt-path-algebras:ideals}), $I(H,\varnothing)$ denotes the ideal of $L_\K(E)$ generated by $\{v \colon v \in H\}$.
 We claim that there is an isomorphism $L_\K(E'') \cong I(H,\varnothing)$. To see this, for each $v \in E^0$, we let
$\mathcal{P}_{\mathrm{min}}(v,H)$ be the set of minimal finite paths from $v$ into $H$, i.e.,
 \[
  \mathcal{P}_{\mathrm{min}}(v,H) = \{ \text{path~} \mu = e_1 \cdots e_n \colon s(e_1) = v , ~ r(e_n) \in H, ~ s(e_k) \notin H \text{~for~} k = 1, \ldots n \} \; .
 \]
 By convention, if $v \in H$, then $\mathcal{P}_{\mathrm{min}}(v,H) = \{ v \}$. Note that $\mathcal{P}_{\mathrm{min}}(v,H)$ is non-empty precisely when $v \in M(H)^0$. Since each vertex in $ E^0 \setminus H$ is regular, there are only finitely many edges that may appear in the paths in $\mathcal{P}_{\mathrm{min}}(v,H)$ for any $v \in E^0$. By minimality, these paths cannot contain cycles; thus the set $\mathcal{P}_{\mathrm{min}}(v,H)$ is finite for each $v \in E^0$. Also note that for any two different paths $\mu, \nu \in \mathcal{P}_{\mathrm{min}}(v,H)$, we have $\mu^* \nu = 0 $ in $L_\K(E)$. Thus we may define, for any $v \in E^0$, an idempotent 
 \[
  \widehat{v} = \sum_{\mu \in \mathcal{P}_{\mathrm{min}}(v,H)} \mu \mu^* \in I(H,\varnothing) \; .
 \]
 We may readily verify by Definition~\ref{def:Leavitt-path-alg} that the prescription
 \[
  v \mapsto \widehat{v} \text{~for~} v \in (E'')^0 \text{~and~} e \mapsto \widehat{s(e)} \, e \, \widehat{r(e)} \text{~for~} e \in (E'')^1 
 \]
 defines a (non-unital) graded homomorphism $L_\K(E'') \hookrightarrow L_\K(E)$ with image in $I(H,\varnothing)$. 
 This map is injective by \cite[Theorem 4.8]{tomforde}.
 On the other hand, by \cite[Lemma~5.6]{tomforde}, we have
 \begin{align*}
  I(H,\varnothing) = &~ \mathrm{span}(\{ \mu \nu^* \colon \mu \text{~and~} \nu \text{~are~paths~with~}  r(\mu) = r(\nu) \in H \}) \\
  = &~ \mathrm{span} \left( \left\{ \big( \widehat{s(\mu)} \cdot \mu \cdot \widehat{r(\mu)} ) ( \widehat{r(\nu)} \cdot \nu^* \cdot \widehat{s(\nu)} \big) \colon  r(\mu) = r(\nu) \in H \right\} \right) \; ,
 \end{align*}
 which shows that the image of the above embedding contains $I(H,\varnothing)$. Therefore we have an isomorphism $L_\K(E'') \cong I(H,\varnothing)$. 
 (We point out that another way of realizing $I(H,\varnothing )$ as a Leavitt path algebra is by using the hedgehog graph, cf. \cite[Definitions 2.5.16 and 2.5.20]{AAS}.) 
 Since $(E'')^0$ is finite, we see that $I(H,\varnothing)$ is unital as an algebra, with unit $p=\sum _{v\in M(H)^0} \widehat{v}$. It follows that $p$ is a central idempotent 
 in $L_\K (E)$, and that 
 $$ L_\K (E') = L_\K (E/H) \cong L_\K (E)/I(H,\varnothing) = (1-p)L_\K(E), $$ 
 and thus
 \[
  L_\K(E) \cong L_\K(E/H) \oplus I(H,\varnothing) \cong L_\K(E')  \oplus L_\K(E'') \; .
 \]
 Since $E/H$ is a finite graph with no cycle, by Remark~\ref{rmk:Leavitt-path-algebras}(\ref{rmk:Leavitt-path-algebras:finite-acyclic}), we see that $L_\K(E')$ has finite dimension. 
 On the other hand, by our construction of the graph $E''$, it inherits all the maximal cycles of $E$, which are all non-exclusive, and $(E'')^0$ is equal to the smallest hereditary 
 saturated subset (with respect to $E''$) containing all the cycles. Thus $E''$ satisfies \eqref{thm:amenLPAs:B1}. Since we have already proved 
 \eqref{thm:amenLPAs:B1} $\Rightarrow$ \eqref{thm:amenLPAs:C1} $\Rightarrow$ \eqref{thm:amenLPAs:A1}, we conclude that $L_\K(E'')$ is not algebraically amenable. 

 \eqref{thm:amenLPAs:B3:acyclic}$\vee$\eqref{thm:amenLPAs:B3:infinite}$\vee$\eqref{thm:amenLPAs:B3:emitter}$\vee$\eqref{thm:amenLPAs:B3:exclusive} $\Rightarrow$ \eqref{thm:amenLPAs:A3}: We first 
 observe that when \eqref{thm:amenLPAs:B3:acyclic} holds and \eqref{thm:amenLPAs:B3:infinite} fails, i.e., when $E$ is finite and acyclic, Remark~\ref{rmk:Leavitt-path-algebras} (\ref{rmk:Leavitt-path-algebras:finite-acyclic}) 
 tells us that $L_\K(E)$ is finite dimensional and thus properly algebraically amenable. 
 
 Apart from this easy case, $L_\K(E)$ is always infinite-dimensional, so by Proposition~\ref{pro:proper-alg-amen2}, it suffices to show that, given any $\varepsilon >0$, any $N \in \N$, and any finite subset $\mathcal F$ of $L_\K(E)$, we can find an $(\mathcal{F}, \varepsilon)$-F\o lner subspace $W$ in $L_\K(E)$ with $\dim(W) \geq N$. Since each element of $L_\K(E)$ is a linear combination of terms of the form $\lambda \rho ^*$, where $\lambda $ and $\rho$ are paths such that $r(\lambda )= r(\rho )$, without loss of generality we can assume that $\mathcal F$ consists of elements of this form, say $\mathcal F = \{ \lambda_1\rho_1^*,
 \ldots , \lambda _r \rho_r^* \}$. 
 
 First, we assume \eqref{thm:amenLPAs:B3:infinite} holds, i.e., $E^0$ is infinite. Then we can find a subset $X \subset E^0$ with $|X | = N$ and $X \cap \{ s(\rho_1) , \ldots, s(\rho_r) \} = \varnothing$. Put $W = \mathrm{span}(X)$. It then follows that $\lambda _j \rho_j^* W = 0 $ for $j = 1, \ldots, r$. Hence $W$ is an $(\mathcal{F}, 0)$-F\o lner subspace with $\dim(W) \geq N$.
 
 Next, we assume \eqref{thm:amenLPAs:B3:emitter} holds but \eqref{thm:amenLPAs:B3:infinite} fails, i.e. $E^0$ is finite and $E^0\setminus H$ contains at least one
infinite emitter. Let $v$ be a maximal element among all infinite emitters of $E^0\setminus H$. Then $M(v)$ contains no cycle and includes only
finitely many vertices with no infinite emitter, and thus it also has only finitely many edges. By
Remark~\ref{rmk:Leavitt-path-algebras}(\ref{rmk:Leavitt-path-algebras:finite-acyclic}), there are only finitely many paths in $E$ ending in $v$. 
 Since $s_E^{-1} (v)$ is infinite, there is $Y \subset s_E^{-1} (v)$ such that $|Y| = N$ and any $e \in Y$ is not contained in any of the paths $\rho_i$, for $i = 1, \ldots, r$. Define $W$ to be the linear span of the finite set
 \[
  \{ \tau e \in L_\K(E)  \colon \tau \text{~is~a~path~ending~in~} v , ~ e \in Y \} \; .
 \]
 Notice that $\dim(W) \geq |Y| = N$. We claim that $\lambda_i \rho_i^* W\subset W$ for $i = 1, \ldots, r$. Indeed, since $e$ is not an edge in $\rho_i$, the only way that the product $(\lambda_i \rho_i^*)(\tau e )$ is nonzero is that $\tau = \rho_i \tau'$ for some path $\tau'$ ending in $v$, whence
 $$  (\lambda_i \rho_i^*)(\tau e ) = \lambda_i\tau ' e  \in W .$$
 This shows our claim. Hence $W$ is an $(\mathcal{F}, 0)$-F\o lner subspace with $\dim(W) \geq N$.
 
 Finally, we assume \eqref{thm:amenLPAs:B3:exclusive} holds but both \eqref{thm:amenLPAs:B3:infinite} and \eqref{thm:amenLPAs:B3:emitter} fail, i.e., $E^0$ is finite, $E^0 \setminus H$ consists of regular vertices, and there is an exclusive maximal cycle, which we denote by $c$. Let $v_0$ be a vertex in $c$ and let $\mu_0$ be the representative of $c$ based at $v_0$. The subgraph $M(v_0)$ of $E$ has the unique cycle $c$, and every vertex in $M(v_0)$ connects to it via paths. We claim that every vertex $v \in M(v_0)^0$ is regular in $M(v_0)$. 
 Indeed, by Remark~\ref{rmk:Leavitt-path-algebras}(\ref{rmk:Leavitt-path-algebras:full-subgraph}), every vertex in $c$ only emits one edge in $M(v_0)$. On the other hand, any $v \in M(v_0)^0 \setminus H$ 
 is regular even in $E$ by our assumption. It remains to show that any $v \in M(v_0)^0 \cap H \setminus c^0$ is regular. For this we let $X \subset H$ consist of all the vertices of maximal cycles of $E$. 
 Then by Remark~\ref{rmk:Leavitt-path-algebras}(\ref{rmk:Leavitt-path-algebras:hereditary-saturated}), $H = \ol{X} = \bigcup_{k=0}^\infty \Lambda_k(T(X))$. It is clear by the maximality of the cycles that $M(v_0)^0 \cap T(X) = c^0$. Hence for any $v \in M(v_0)^0 \cap H \setminus c^0$, there is some $k \in \N_0$ such that $v \in \Lambda_{k+1}(T(X)) \setminus \Lambda_{k}(T(X)) $; thus $v$ is a regular vertex (even in $E$) by the definition of $ \Lambda_{k+1}(T(X))$. This proves the claim. Now for each $v \in E^0$, we let $\mathcal{P}_{\mathrm{min}}(v,v_0)$ be the set of minimal finite paths from $v$ to $v_0$, i.e.,
 \[
  \mathcal{P}_{\mathrm{min}}(v,v_0) = \{ \text{path~} \mu = e_1 \cdots e_n \colon s(e_1) = v , ~ r(e_n) = v_0, ~ s(e_k) \not= v_0 \text{~for~} k = 1, \ldots n \} \; ,
 \]
 By convention, $\mathcal{P}_{\mathrm{min}}(v_0,v_0) = \{ v_0 \}$. Note that $\mathcal{P}_{\mathrm{min}}(v,v_0)$ is a subset of all paths in $M(v_0)$ for each $v \in E^0$ and is non-empty 
 precisely when $v \in M(v_0)^0$. Since every vertex $v \in M(v_0)^0$ is regular in $M(v_0)$, there are only finitely many edges that may appear in the paths 
 in $\mathcal{P}_{\mathrm{min}}(v,v_0)$ for any $v \in E^0$. By minimality, these paths cannot contain cycles; thus the set $\mathcal{P}_{\mathrm{min}}(v,v_0)$ is finite for 
 each $v \in E^0$. Thus the union $\mathcal{P} = \bigcup_{v \in E^0} \mathcal{P}_{\mathrm{min}}(v,v_0) $ of all minimal paths ending in $v_0$ is also finite. Note that any path ending in $v_0$ can be written 
 uniquely as $\gamma \mu_0^k$ for some $\gamma \in \mathcal{P}$ and $k \in \N_0$. For each $k \in \N_0$, define a linear subspace $W_k$ of $L_\K(E)$ by
 \[
  W_k = \mathrm{span}( \{ \gamma \mu_0^k \colon \gamma \in \mathcal{P}  \} ) 
 \]
 Thus for any different $k,l \in \N_0$, we have $\dim(W_k) = |\mathcal{P}|$ and the collection of subspaces $\{ W_k \}$ is independent.  
 Let $N_1 \in \N$ be such that $N_1 |\mu_0|$ is greater than the length of each path among $\lambda _1, \ldots, \lambda_r, \rho_1, \ldots, \rho_r $, where $|\mu_0|$ is the 
 length of $\mu_0$. For any $j \in \{1, \ldots, r\}$, $\gamma \in \mathcal{P}$ and $k \in \N$ with $k \geq N_1$, we claim that 
 \[
  \lambda_j\rho_j^* \gamma \mu_0^k  \in \sum_{l = k-N_1}^{k+N_1} W_l \; .
 \]
 Indeed, this is trivial when $\rho_j^* \gamma \mu_0^k = 0$. If $\rho_j^* \gamma \mu_0^k \not= 0$, since $|\gamma \mu_0^k| > |\rho_j|$, we have $\gamma \mu_0^k = \rho_j \tau$ for some path $\tau$ ending in $v_0$. Hence $\lambda_j\rho_j^* \gamma \mu_0^k = \lambda_j \tau = \theta \mu_0^l$ for some $\theta \in \mathcal{P}$ and $l \in \N$. If $|\gamma | > |\rho_j|$, then $s(\tau) \notin c^0$ and thus $l = k$. Otherwise we have the estimates
 \[
  k |\mu_0| - |\rho_j| \leq l |\mu_0| \leq k |\mu_0| + |\lambda_j| \;.
 \]
 In either case, we have $k-N_1 \leq l \leq k+N_1$. This proves the claim. Now let $N_2 \in \N_0$ be such that $N_2 > N + N_1$ and $\frac{2 N_1}{N_2-N_1} \le \varepsilon $, and define 
 \[
  W = \sum_{k=N_1 + 1}^{N_2} W_k \; .
 \]
 Then $\dim(W) = |\mathcal{P}| (N_2 - N_1) \geq N$ and for any $j \in \{1, \ldots, r\}$, we have
 \[
  \frac{\dim (\lambda_j\rho_j^* W + W)}{\dim (W)} \leq \frac{ \dim( \sum_{k=1}^{N_2+N_1} W_k ) }{ \dim( \sum_{k=N_1+1}^{N_2} W_k ) } = \frac{| \mathcal P | (N_2+ N_1) }{| \mathcal P | (N_2-N_1)} \le 1 +\varepsilon \; .
 \]
 Hence $W$ is an $(\mathcal{F}, \varepsilon )$-F\o lner subspace with $\dim(W) \geq N$.
 
 Therefore any of the conditions \eqref{thm:amenLPAs:B3:acyclic}, \eqref{thm:amenLPAs:B3:infinite}, \eqref{thm:amenLPAs:B3:emitter} and \eqref{thm:amenLPAs:B3:exclusive} implies that $L_\K(E)$ is properly algebraically amenable. 
\end{proof}

We highlight the following trivial consequence of Theorem~\ref{thm:amenLPAs}:

\begin{corollary}
 \label{cor:equiv-notamenable-prop-inf}
 Let $E$ be a graph with finitely many vertices and let $\K$ be a field. Then the (unital) Leavitt path algebra $L_\K (E)$ is not algebraically 
 amenable if and only if it is properly infinite. 
 \end{corollary}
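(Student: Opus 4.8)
The plan is to obtain this as a direct consequence of Theorem~\ref{thm:amenLPAs}, with the finiteness of $E^0$ entering only to guarantee unitality. First I would record that, since $|E^0| < \infty$, the Leavitt path algebra $L_\K(E)$ is unital by Remark~\ref{rmk:Leavitt-path-algebras}(\ref{rmk:Leavitt-path-algebras:unital}); in particular the hypothesis of Corollary~\ref{cor:Properly-infinite} is automatically satisfied.

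For the ``if'' direction, I would argue as follows: if $L_\K(E)$ is properly infinite, then, being unital, it is not algebraically amenable by Corollary~\ref{cor:Properly-infinite}. Note that this half uses nothing beyond Corollary~\ref{cor:Properly-infinite} and is logically independent of Theorem~\ref{thm:amenLPAs}.

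For the ``only if'' direction, suppose $L_\K(E)$ is not algebraically amenable, i.e., condition~\eqref{thm:amenLPAs:A1} of Theorem~\ref{thm:amenLPAs} holds. Invoking the first cycle of equivalences established there, namely \eqref{thm:amenLPAs:A1}$\Rightarrow$\eqref{thm:amenLPAs:C1}, I would conclude that $L_\K(E)$ is unital and properly infinite; in particular, it is properly infinite, which is what we want.

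I do not expect a genuine obstacle here, since the substantive work has already been carried out inside the proof of Theorem~\ref{thm:amenLPAs} (in particular the implication \eqref{thm:amenLPAs:B1}$\Rightarrow$\eqref{thm:amenLPAs:C1}, which rests on Lemma~\ref{vInfiniteIdempotent} and the hereditary-saturated-closure induction). The only point I would be careful to verify is the logical consistency of the statement with the trichotomy in Theorem~\ref{thm:amenLPAs}: when $E^0$ is finite but we are in case \eqref{thm:amenLPAs:A2} or \eqref{thm:amenLPAs:A3} instead, the algebra $L_\K(E)$ is algebraically amenable, hence by Corollary~\ref{cor:Properly-infinite} it is not properly infinite, so both sides of the claimed equivalence are false and no contradiction arises.
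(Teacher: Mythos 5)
Your proposal is correct and follows essentially the same route as the paper, which presents this corollary as an immediate consequence of the equivalence \eqref{thm:amenLPAs:A1}$\Leftrightarrow$\eqref{thm:amenLPAs:C1} in Theorem~\ref{thm:amenLPAs} together with the unitality of $L_\K(E)$ when $|E^0|<\infty$. Your extra remarks (the ``if'' direction resting only on Corollary~\ref{cor:Properly-infinite}, and the consistency check against the trichotomy) are sound but not needed beyond what the theorem already provides.
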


 \begin{remark}
  \label{rem:growth-conditions}
  It is well-known (\cite[Proposition 3.1]{Elek03}) that a finitely generated $\K$-algebra of subexponential growth is amenable. On the other hand, it has been shown in \cite{AAJZ}
  that, for a finite graph $E$, the Leavitt path algebra $L_{\K}(E)$ either has exponential growth or has polynomially bounded growth. Moreover, by \cite[Theorem 5 (1)]{AAJZ}, $L_{\K}(E)$ has 
 polynomially bounded growth if and only if every cycle of $E$ is an exclusive cycle, and in this case a precise formula for the Gelfand-Kirillov dimension of $L_{\K}(E)$ is obtained (\cite[Theorem 5 (2)]{AAJZ}).
 Comparing this with Theorem~\ref{thm:amenLPAs}, we see that there are finite graphs such that $L_{\K}(E)$ is algebraically amenable and has exponential growth (just consider the graph $E$ of Example~\ref{exam:pathalgebra2}). 
   \end{remark}

 Since $L_\mathbb{K}(E)$ admits an involution (see for instance \cite{tomforde}), left and right amenability is equivalent for these algebras.
 Moreover the above proof shows that we can ``localize'' amenability in certain parts of the graph (in analogy with the metric space situation,
 cf., Subsection~\ref{subsec:generalized-metric}). 
 We provide a simple example that shows that the situation is quite different
 when we consider the usual path algebras. 
 
 \begin{definition}\label{def:path-alg}
 Given an arbitrary graph $E$ and a field $\mathbb{K}$, the \emph{path $\mathbb{K}$-algebra }$\mathbb{K} E$ is defined to be the $\mathbb{K}$-algebra generated by a set $\{v:v\in E^{0}\}$ of pairwise orthogonal idempotents together with a set of variables $\{e:e\in E^{1}\}$ which satisfy $s(e)e=e=er(e)$ for all $e\in E^{1}$.
\end{definition}
 
 In other words, the path algebra is linearly spanned by all paths in $E$, with the multiplication given by concatenation of paths (or zero if two paths cannot be concatenated). 
 
 \begin{example}
  \label{exam:pathalgebra}
  Let $E$ be the following graph:
 \[
{
\def\labelstyle{\displaystyle}
\xymatrix{ \bullet^w  \uloopr{z}\dloopr{y}
 & \bullet^v \ar[l]^{x} }} 
\]
Let $\cA$ be the corresponding path algebra $\K E$. We claim that $\cA$ is left properly algebraically amenable but not right algebraically amenable. 

To this end, we first observe, by checking on all paths in $E$, that for any $a \in \cA$, we have $a v = vav = \kappa v$ for some $\kappa \in \K$, while $wa = waw$ and $vaw = xbw$ for some $b \in w\cA w $. Since $v + w = \1$, we have the linear decomposition 
\[
 \cA = w \cA w \oplus v \cA w \oplus v \cA v = w \cA w \oplus x \cA w \oplus \K v \; .
\]
Define the following linear maps:
\begin{align*}
 \lambda \colon &~ \cA \to w \cA = w \cA w , && a \mapsto wa \; ; \\
 \rho \colon &~ \cA \to \cA w , && a \mapsto aw \; ; \\
 \phi \colon &~ w \cA w \to x \cA w , && a \mapsto x a \; .
\end{align*}
Then $\lambda$ and $\rho$ are surjections with kernels $v\cA$ and $\cA v$ ($= \K v$), respectively, while $\phi$ is a bijection. Also observe that the subalgebra $w \cA w$ is isomorphic to the free algebra on two generators, and hence not algebraically amenable as it cannot carry an invariant dimension measure. In particular, both $w \cA w$ and $x \cA w$ have countably infinite dimension.

To see that $\cA$ is left properly algebraically amenable, we choose an arbitrarily large finite-dimensional subspace $W$ of $ x \cA w$ and note that $\cA W = \cA (v W ) = \K v W = W$, i.e., $W$ is an $(\cA, 0)$-F\o lner subspace. 

It remains to show that $\cA$ is not right algebraically amenable. Since $w \cA w$ is not algebraically amenable, by Lemma~\ref{lem:local-doubling}, there exists a finite subset $\cF_0 \subset w \cA w$ such that for any finite-dimensional subspace $W \subset w \cA w$, we have $\dim (W \mathcal F _0 + W  )\ge 3 \dim (W)$. Without loss of generality, we may assume $w \in \cF_0$. Now define 
\[
 \cF = \cF_0 \cup \{ x, v \} \; .
\]
Given an arbitrary nontrivial finite-dimensional subspace $W \subset \cA$, we would like to show that $\dim (W \mathcal F  + W  )\ge 2 \dim (W)$. 

First, if $W = \K v$, then $W \mathcal F  + W  = \K x \oplus \K v$, which has dimension $2$, as desired. Now if $W \not= \K v$, or equivalently, $W w \not= 0$, then notice that
\begin{align*}
 \dim(W) = &~ \dim(\rho(W)) + \dim(\mathrm{ker}(\rho) \cap W) \\
 = &~ \dim(W w) + \dim( \K v \cap W) \\
 = &~ \dim(\lambda(W w)) + \dim(\mathrm{ker}(\lambda) \cap W w) + \dim( \K v \cap W) \\
 = &~ \dim(w W w) + \dim( v \cA  \cap W w) + \dim( \K v \cap W) \\
 = &~ \dim(w W w) + \dim( x \cA w \cap W w) + \dim( \K v \cap W) \; .
\end{align*}
Similarly, we have
\begin{align*}
 \dim(W \cF + W) = &~ \dim ( W \cF_0 + Wx + W v + W) \\
 = &~ \dim (w ( W \cF_0 + Wx + W v + W ) w ) \\
  & +   \dim( x \cA w \cap ( W \cF_0 + Wx + W v + W) w) \\
  & + \dim( \K v \cap ( W \cF_0 + Wx + W v + W)) \\
 = &~ \dim (w W w \cF_0 ) + \dim( x \cA w \cap ( W w \cF_0 + W x ) ) + \dim( v W v) \\
 \geq &~ \dim (w W w \cF_0 ) + \dim( ( x \cA w \cap  W w ) \cF_0  ) + \dim( v W v ) \\
 = &~ \dim (w W w \cF_0 ) + \dim( \phi^{-1} ( x \cA w \cap  W w ) \cF_0 ) + \dim( v W v) \\
 \geq &~ 3 \dim(w W w) + 3 \dim(\phi^{-1} ( x \cA w \cap  W w )) + \dim( v W v) \\
 = &~ 3 \dim(w W w) + 3 \dim( x \cA w \cap  W w ) + \dim( v W v) \\
 = &~ 3 \dim(W w) + \dim(v W v) \; .
\end{align*}
Here we used the fact that $\phi$ is a bijection and preserves multiplication from the right. Depending on whether $v \in W$ and whether $Wv = 0$, the pair $(\dim(v W v) , \dim( \K v \cap W) )$ may take value among $(0,0)$, $(1, 0)$ and $(1,1)$. In any case, since $\dim(W w) \geq 1$ by our assumption, we have
\[
 \frac{\dim(W \cF + W)}{ \dim(W) } \geq \frac{ 3 \dim(W w) + \dim(v W v) }{ \dim(W w) + \dim( \K v \cap W) } \geq \frac{3 \dim(W w)+1}{ \dim(W w) + 1} \ge  2
\]
as desired. Therefore $\cA$ is not algebraically amenable. \qed
 \end{example}

 The next example is similar to the above. It shows that having a maximal exclusive cycle is not enough to guarantee the (right) amenability of path algebras 
 (compare with Theorem~\ref{thm:amenLPAs}).

  \begin{example}
  \label{exam:pathalgebra2}
  Let $E$ be the following graph:
 \[
{
\def\labelstyle{\displaystyle}
\xymatrix{ \bullet^w  \uloopr{z}\dloopr{y}
 & \bullet^v \ar[l]^{x} \uloopd{t}}}
\]
Here we also have that the path algebra $\mathcal A : = \mathbb{K}E$ is left properly algebraically amenable but not right algebraically amenable, despite the existence of an exclusive maximal cycle. Since the proof is similar to the one in the previous example, we only give a sketch, leaving the details to the reader.

 In this case, we have 
 a linear decomposition 
 $$\mathcal A = w\mathcal A w \oplus v\mathcal A v \oplus v\mathcal A w \cong w\mathcal A w \oplus \mathbb{K}[t] v \oplus x\mathcal A w \oplus tx\mathcal A w \oplus t^2 x\mathcal A w \oplus \cdots \; .$$ 
 For the left algebraic amenability, we can use a proper F\o lner net inside $\mathbb{K}[t]v$. On the other hand, for the right algebraic non-amenability, we again take $\cF_0 \subset w \cA w$ as in the previous example and set $\cF = \cF_0 \cup \{ x, v \} $. Given an arbitrary finite-dimensional subspace $W \subset \cA$, if $\dim(\cA v \cap W) \geq \frac{3}{5} \dim(W)$, then
 \[
  \dim(W \cF) \geq \dim( (\cA v \cap W ) \cdot \{x,v\} ) = 2 \dim(\cA v \cap W ) \geq \frac{6}{5} \dim(W) \; .
 \]
 Otherwise, we have $\dim(W w) = \dim(W / (\cA v \cap W) ) = \dim(W) - \dim(\cA v \cap W) > \frac{2}{5} \dim(W)$. Note that $W w$ is contained in a finitely generated free right $w\mathcal A w$-module $w\mathcal A w \oplus x\mathcal A w \oplus tx\mathcal A w \oplus t^2 x\mathcal Aw \oplus \cdots \oplus t^k x\mathcal Aw $ for some $k \in \N_0$. Thus by iterating the argument we used in the previous example (where we had $Ww \subset w\mathcal A w \oplus x\mathcal A w$), we can show 
 \[
  \dim(W \cF) \geq \dim(  W w  \cdot \cF_0 ) \geq 3 \dim( W w ) > \frac{6}{5} \dim(W) \; .
 \]
 Thus $\cA$ is not right algebraically amenable. \qed 
 \end{example}

\section{Translation algebras on coarse spaces}\label{sec:alg-amenable2}

To conclude we will illustrate the close relation between amenability for metric spaces and algebraic amenability for $\K$-algebras, in view of the natural bridge 
between the two settings \textendash~the construction of translation algebras (see, e.g., \cite[Chapter~4]{Roe03}). Let us recall this construction. 

Let $(X,d)$ be a locally finite extended metric space as in Section~\ref{sec:amenability-metric} and $\K$ an arbitrary field. We denote by $\K[X]$ the $\K$-linear space generated by the basis $X$, and by $\mathrm{End}_{\K}(\K[X])$ the algebra of $\K$-linear endomorphism of $\K[X]$. For the sake of clarity, we denote by $\delta_x$ the basis element of $\K[X]$ corresponding to a point $x \in X$. We also sometimes think of an element $T \in \mathrm{End}_{\K}(\K[X])$ as a matrix indexed by $X$, and define $T_{xy} \in \K$ as its entry at $(x,y ) \in X \times X$, so that $T(\delta_y) = \sum_{x\in X} T_{xy} \delta_x $ for any $y \in X$. 

For any partial translation $t$ on $X$ (cf.~Definition~\ref{def:part-transl}), we define $V_t \in \mathrm{End}_{\K}(\K[X])$ by
\begin{equation}\label{eq:partial-trans}
 V_t(\delta_x):=
\begin{cases}
           \delta_{t(x)} & \text{if}\  x \in \mathrm{dom}(t)\\
           0 & \text{if}\  x \notin \mathrm{dom}(t) \;.
\end{cases}
\end{equation}

Note that for any two partial translations $t$ and $t'$ on $X$, we have $V_t V_{t'} = V_{t \circ t'}$. In other words, $t \mapsto V_t$ gives a representation of the semigroup $\mathrm{PT}(X)$. 

\begin{definition}\label{def:translation-algebra}
 The \emph{translation $\K$-algebra} $\K_\mathrm{u}(X)$ is the (unital) $\K$-subalgebra of $\mathrm{End}_{\K}(\K[X])$ generated by $V_t$ for all the partial translations $t$ on $X$. 
\end{definition}

Any subset $A \subset X$ gives rise to an idempotent $V_{\mathrm{Id}_A}$ in $\K_\mathrm{u}(X)$, where $\mathrm{Id}_A$ is the identity map on $A$. For the sake of simplicity, we denote this idempotent by $P_A$. In particular, $P_X$ is equal to the unit of $\mathrm{End}_{\K}(\K[X])$. Note that we have the identities
\[
 V_{t^{-1}} V_t=P_{\mathrm{dom}(t)} \qquad \mathrm{and} \qquad V_t V_{t^{-1}}=P_{\mathrm{ran}(t)} 
\]
for any partial translation $t$ on $X$. Moreover, any element in $\K_\mathrm{u}(X)$ can be linearly spanned by the generators $V_t$.

Given a matrix $T \in \mathrm{End}_{\K}(\K[X])$ it is useful to consider its propagation as defined by
\begin{align*}
p(T):=\sup\Big\{d(x,y):x,y\in X\quad\mathrm{and}\quad T_{xy}\neq 0\Big\}.
\end{align*}
It is clear that every element in the translation $\K$-algebra has finite propagation and that for any $A\subset X$ we have $p(P_A)=0$.

\begin{remark}\label{rem:Roe-direct-sum}
 One can easily see that whenever we have a decomposition of an extended metric space $X$ into a finite disjoint union $X_1 \sqcup \ldots \sqcup X_n$ with infinite distance between each pair of subspaces, then the associated idempotents $P_{X_1} , \ldots, P_{X_n}$ are central and mutually orthogonal, and add up to the unit, which induces a direct sum decomposition
 \[
  \K_\mathrm{u}(X) \cong \bigoplus_{i = 1} ^n \K_\mathrm{u}(X_i) \; .
 \]
\end{remark}

\begin{theorem}\label{theorem:main1}
Let $(X,d)$ be a locally finite extended metric space and let $\K_\mathrm{u}(X)$ be its translation $\K$-algebra. Let $n \geq 2$ be a natural number. 
Then the following conditions are equivalent: 
\begin{enumerate}
 \item \label{theorem:main1:coarse} $(X,d)$ is amenable.
 \item \label{theorem:main1:algebraic} $\K_\mathrm{u}(X)$ is algebraically amenable.
 \item \label{theorem:main1:properly-infinite} $\K_\mathrm{u}(X)$ is not properly infinite.
 \item \label{theorem:main1:Leavitt-A} $\K_\mathrm{u}(X)$ does not contain the Leavitt algebra $L_{\K}(1,n)$ as a unital $\K$-subalgebra. 
\end{enumerate}
\end{theorem}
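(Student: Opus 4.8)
The plan is to prove the cycle of implications $(\ref{theorem:main1:coarse})\Rightarrow(\ref{theorem:main1:algebraic})\Rightarrow(\ref{theorem:main1:properly-infinite})\Rightarrow(\ref{theorem:main1:Leavitt-A})\Rightarrow(\ref{theorem:main1:coarse})$, with $n\ge 2$ fixed throughout. The implication $(\ref{theorem:main1:algebraic})\Rightarrow(\ref{theorem:main1:properly-infinite})$ is immediate, since $\K_\mathrm{u}(X)$ is unital (with unit $P_X$), so if it were properly infinite it could not be algebraically amenable by Corollary~\ref{cor:Properly-infinite}.

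For $(\ref{theorem:main1:coarse})\Rightarrow(\ref{theorem:main1:algebraic})$, recall that every element of $\K_\mathrm{u}(X)$ is a finite $\K$-linear combination of operators $V_t$, $t\in\mathrm{PT}(X)$; after the standard reduction (Remark~\ref{rmk:charac-alg-amen2}) it is therefore enough, given $\varepsilon>0$, to produce an $(\cF,\varepsilon)$-F\o lner subspace when $\cF=\{V_{t_1},\dots,V_{t_k}\}$ with each $t_j$ of propagation at most some $R$. Using the outer-boundary version of amenability (cf.\ the remark following Definition~\ref{def:metric-amenability}), I would choose an $(R,\varepsilon)$-F\o lner set $F\subset X$ and split it along coarse connected components, $F=\bigsqcup_i F_i$. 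Since $\partial^+_R F=\bigsqcup_i\partial^+_R F_i$ (as in Remark~\ref{coarse-boundaries}), an averaging argument yields a single component $i_0$ with $|\partial^+_R F_{i_0}|\le\varepsilon\,|F_{i_0}|$. Then I would take $W=\mathrm{span}_\K\{E_{x,y}:x,y\in F_{i_0}\}$, where $E_{x,y}\in\mathrm{End}_\K(\K[X])$ is the rank-one operator sending $\delta_y$ to $\delta_x$ and annihilating $\delta_z$ for $z\ne y$; each $E_{x,y}$ lies in $\K_\mathrm{u}(X)$ because $x$ and $y$ are in the same component, so the one-point partial bijection $y\mapsto x$ is a partial translation. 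Thus $\dim W=|F_{i_0}|^2$, and since $V_{t_j}E_{x,y}$ equals $E_{t_j(x),y}$ (when $x\in\mathrm{dom}(t_j)$) or $0$, with $t_j(x)\in F_{i_0}\cup\partial^+_R F_{i_0}$ whenever defined (here one uses that $t_j$ maps each coarse component into itself), we get $V_{t_j}W+W\subseteq\mathrm{span}_\K\{E_{x',y}:x'\in F_{i_0}\cup\partial^+_R F_{i_0},\ y\in F_{i_0}\}$ and hence $\dim(V_{t_j}W+W)\le(|F_{i_0}|+|\partial^+_R F_{i_0}|)\,|F_{i_0}|\le(1+\varepsilon)\dim W$, as required.

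For $(\ref{theorem:main1:properly-infinite})\Rightarrow(\ref{theorem:main1:Leavitt-A})$ I argue by contraposition: if $\K_\mathrm{u}(X)$ contains $L_\K(1,n)$ as a unital subalgebra, then, writing $X_{1i},Y_{i1}$ for the canonical generators of $L_\K(1,n)$, the idempotents $X_{1i}Y_{i1}$ are pairwise orthogonal, each Murray--von~Neumann equivalent to $\1$; since $n\ge 2$, the unit of $L_\K(1,n)$ --- which is also the unit of $\K_\mathrm{u}(X)$ --- is a properly infinite idempotent, so $\K_\mathrm{u}(X)$ is properly infinite. For $(\ref{theorem:main1:Leavitt-A})\Rightarrow(\ref{theorem:main1:coarse})$, again by contraposition: if $X$ is not amenable, Theorem~\ref{theorem:amenable-extended-metric} supplies a paradoxical decomposition $X=X_+\sqcup X_-$ together with bijective partial translations $t_\pm\colon X\to X_\pm$ of domain $X$. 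Iterating $t_+$ and using the telescoping identity $t_+^{j}(X)=t_+^{j}(X_-)\sqcup t_+^{j+1}(X)$, I would construct partial translations $r_1,\dots,r_n$, all of domain $X$, namely $r_k=t_+^{k-1}\circ t_-$ for $k<n$ (with $t_+^{0}=\mathrm{id}_X$) and $r_n=t_+^{n-1}$, whose ranges $Z_1,\dots,Z_n$ are pairwise disjoint with union $X$. Setting $x_k=V_{r_k}$ and $y_k=V_{r_k^{-1}}$ and using $V_sV_{s'}=V_{s\circ s'}$, one checks that $y_kx_l=V_{r_k^{-1}\circ r_l}$ equals $P_X=\1$ if $k=l$ and $0$ if $k\ne l$ (the composition has empty domain because $Z_k\cap Z_l=\varnothing$), while $\sum_k x_ky_k=\sum_k P_{Z_k}=P_X=\1$. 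These are precisely the defining relations of $L_\K(1,n)$, so they determine a unital homomorphism $L_\K(1,n)\to\K_\mathrm{u}(X)$; since $L_\K(1,n)$ is simple (because $m=1$), this homomorphism is injective, i.e.\ $\K_\mathrm{u}(X)$ contains $L_\K(1,n)$ as a unital subalgebra.

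I expect the main obstacle to be the implication $(\ref{theorem:main1:coarse})\Rightarrow(\ref{theorem:main1:algebraic})$ in the generality of extended metric spaces. The naive candidate F\o lner subspace, the block-diagonal algebra $\bigoplus_i\mathrm{span}_\K\{E_{x,y}:x,y\in F_i\}$, does not work, because the estimate $|\partial^+_R F|\le\varepsilon|F|$ need not give a good boundary-to-size ratio in each component separately --- one component with a large boundary can be hidden among many small ones. Passing to a single well-chosen component, via the averaging step, is exactly what makes the estimate go through; after that the computation is mere bookkeeping with matrix units. The remaining implications are comparatively routine, relying on the dictionary $t\leftrightarrow V_t$ together with the characterization of amenability by non-paradoxicality (Theorem~\ref{theorem:amenable-extended-metric}) and the non-amenability of properly infinite unital algebras (Corollary~\ref{cor:Properly-infinite}).
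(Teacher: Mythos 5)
Your proof is correct, and its overall architecture \textemdash~the cycle (\ref{theorem:main1:coarse})$\Rightarrow$(\ref{theorem:main1:algebraic})$\Rightarrow$(\ref{theorem:main1:properly-infinite})$\Rightarrow$(\ref{theorem:main1:Leavitt-A})$\Rightarrow$(\ref{theorem:main1:coarse}), resting on Corollary~\ref{cor:Properly-infinite} and Theorem~\ref{theorem:amenable-extended-metric} \textemdash~coincides with the paper's. Two local steps differ, both to your advantage. For (\ref{theorem:main1:coarse})$\Rightarrow$(\ref{theorem:main1:algebraic}) the paper takes $W=P_F\K_\mathrm{u}(X)P_F$ for an arbitrary $(R,\varepsilon)$-F\o lner set $F$ and asserts $\dim W=|F|^2$; since finite-propagation operators have no matrix entries linking distinct coarse components, this corner in fact has dimension $\sum_i|F_i|^2$, and, exactly as you observe, the boundary-to-size estimate can then degrade when $F$ meets many components (a component with bad ratio can dominate $\sum_i|F_i|^2$ while being invisible in $|\partial_R F|/|F|$). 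Your averaging step, extracting a single component $F_{i_0}$ with $|\partial_R^+F_{i_0}|\le\varepsilon|F_{i_0}|$, is precisely the reduction that makes the count $\dim W=|F_{i_0}|^2$ and the ensuing matrix-unit bookkeeping valid; the paper's computation implicitly needs $F$ to lie in one coarse component, and you supply that. For (\ref{theorem:main1:Leavitt-A})$\Rightarrow$(\ref{theorem:main1:coarse}) the paper builds a unital copy of $L_\K(1,2)$ from the paradoxical decomposition and then invokes the unital embedding $L_\K(1,n)\hookrightarrow L_\K(1,2)$ from \cite{BS16}; you instead iterate $t_+$ to manufacture $n$ partial translations with full domain and pairwise disjoint ranges covering $X$, realizing $L_\K(1,n)$ directly (with injectivity from simplicity of $L_\K(1,n)$, as in the paper's treatment of $L_\K(1,2)$), which makes the argument self-contained. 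The remaining two implications are identical to the paper's.
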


\begin{proof} 
(\ref{theorem:main1:coarse}) $\Rightarrow$ (\ref{theorem:main1:algebraic}): 
Consider $\varepsilon > 0$ and a finite set $\mathcal{F}\subset \K_\mathrm{u}(X)$. We may assume that any element in $\cF$ has propagation at most $R>0$.
Since $(X,d)$ is amenable, and using the conventions in Definition~\ref{def:metric-amenability},
there exists a (finite, non-empty set) $F\in\mathrm{\mbox{F\o l}}(R, \varepsilon)$. We first show that we may assume that $F$ is contained in a single coarse component of $X$.
Indeed, write $F= \bigsqcup_{i=1}^N F_i$, where $F_i$, $i=1,\dots ,N$, are the coarse components of $F$ (i.e, the non-empty intersections of $F$ with the different coarse components of $X$).
We then have $\sum_{i=1}^N |\partial_R(F_i)| / |F| < \varepsilon $. Suppose that  $ |\partial_R(F_i)| / |F_i| > \varepsilon$ for all $i$. Then we have
$$\sum_{i=1}^N \frac{|\partial_R(F_i)|}{|F|} = \sum_{i=1}^N \frac{|F_i|}{|F|} \cdot \frac{|\partial_R(F_i)|}{|F_i|} > \Big( \sum_{i=1}^N \frac{|F_i|}{|F|} \Big) \varepsilon = \varepsilon ,$$ 
a contradiction.   Thus, by replacing $F$ with some of the its coarse components, we may assume that $F$ is contained in a coarse component of $X$. 

It follows from the definition of propagation that whenever $d(Y, Y') > R$, then any $T\in\cF$ satisfies $P_Y T P_{Y'} = 0$. 
Now we define the following linear subspace in $\K_\mathrm{u}(X)$ (in fact a subalgebra),
\[
 W := P_F \K_\mathrm{u}(X) P_F \subset \K_\mathrm{u}(X)\;,
\]
which satisfies that $\dim W = |F|^2$, because $F$ is contained in a single coarse component of $X$. 

We analyze next for any $T\in\mathcal{F}$ the subspace $T W $ as follows. To simplify expressions
we will use the standard notation for the commutator of two operators: $[T, B] := TB-BT$. Using the 
notation of $R$-boundaries and neighborhoods of Section~\ref{sec:amenability-metric} we have
\[
 \1 = P_F + P_{X \setminus F} = (P_{N^-_{R}F} + P_{\partial^-_R F}) + (P_{\partial^+_R F} + P_{X \setminus N^+_R F} )
\]
as well as
\[
 P_{N_{R}^-F} T P_{X \setminus F} = P_{X \setminus F} T P_{N_{R}^-F} = P_{X \setminus N_R^+ F} T P_F = P_F T P_{X \setminus N_R^+ F} = 0 \;.
\]
Then we have
\begin{align*}
 T P_F =&\ (P_F + P_{\partial^+_R F} + P_{X \setminus N_R^+ F} ) T P_F \\
 =&\ P_F T P_F + P_{\partial^+_R F} T (P_{N_{R}^-F} + P_{\partial^-_R F}) + 0 \\
 =&\ P_F T P_F + 0 + P_{\partial^+_R F} T P_{\partial^-_R F} \;,
\end{align*}
and similarly
\[
 P_F T = P_F T P_F + P_{\partial^-_R F} T P_{\partial^+_R F}.
\]
Hence 
\begin{equation}\label{comm}
 [T, P_F] = P_{\partial^+_R F} T P_{\partial^-_R F} - P_{\partial^-_R F} T P_{\partial^+_R F} \; ,
\end{equation}
and
\begin{eqnarray} \nonumber
 T W   &=& \{T\, P_F B P_F \,:\, B\in\K_\mathrm{u}(X)\; \} \\ \nonumber
            &=& \{P_F TB P_F + [T,P_F]\,BP_F \, :\, B\in\K_\mathrm{u}(X)\; \} \\
            &=& \{P_F TB P_F + P_{\partial^+_R F} T P_{\partial^-_R F} \,BP_F - P_{\partial^-_R F} T P_{\partial^+_R F} \,BP_F \, :\, B\in\K_\mathrm{u}(X)\; \} \\
       &\subseteq & W  + P_{\partial^+_R F} \K_\mathrm{u}(X) P_{F} + P_{\partial^-_R F} \K_\mathrm{u}(X) P_{F} \;.
\end{eqnarray}
Therefore we have the following estimates for any $T\in\cF$:
\begin{eqnarray*} 
 \frac{\dim(TW  +W )}{\dim(W )} &\leq & \frac{\dim(W ) + \dim(P_{\partial^+_R F} \K_\mathrm{u}(X) P_{F} ) + \dim( P_{\partial^-_R F} \K_\mathrm{u}(X) P_{F})}{\dim(W )}  \\
                                &\leq &  1+ \frac{|F|\,|\partial^+_R F|+|F|\,|\partial^-_R F|}{|F|^2}  \\
                                &=    &  1+\frac{ \, |\partial_R F| }{|F|} \;\leq\; 1+ \varepsilon\;.
\end{eqnarray*}
This shows that $\K_\mathrm{u}(X)$ is algebraically amenable. 

(\ref{theorem:main1:algebraic}) $\Rightarrow$ (\ref{theorem:main1:properly-infinite}): This implication follows from 
Corollary~\ref{cor:Properly-infinite}.
  
(\ref{theorem:main1:properly-infinite}) $\Rightarrow$ (\ref{theorem:main1:Leavitt-A}): Suppose that 
for some $n\geq 2$ the Leavitt algebra $L(1,n)$ unitally embeds into $\C_\mathrm{u}(X)$. Then, any two distinct pairs
of generators $X_i,Y_i$, $X_j,Y_j$, $i\not=j$, of $L(1,n)$ implement the proper infiniteness of 
$\K_\mathrm{u}(X)$.  
   
(\ref{theorem:main1:Leavitt-A}) $\Rightarrow$ (\ref{theorem:main1:coarse}): Assume that $(X,d)$ is not amenable. Then by 
Theorem~\ref{theorem:amenable-extended-metric} $X$ is paradoxical, i.e., there is a partition $X=X_+\sqcup X_-$ and partial 
translations $t_{\pm}\colon X\to X_\pm$. The corresponding generators of the translation algebra $V_{t_\pm}$, $V_{t_\pm^{-1}}$ 
satisfy 
\[
 V_{t_+}V_{t_+^{-1}}+V_{t_-}V_{t_-^{-1}} =\1\;,\quad V_{t_\pm^{-1}}V_{t_\pm}=\1\quad\mathrm{and}\quad V_{t_\pm^{-1}}V_{t_\mp}=0\;.
\]
This shows that $L(1,2)$ unitally embeds into the translation $\K$-algebra. The result then follows from the fact that $L(1,n)$ unitally embeds into $L(1,2)$ (see \cite[Theorem 4.1]{BS16}).
\end{proof}

We also have an analogous result for proper amenability. We will use the following terminology. Given two algebras $\mathcal A$ and $\mathcal B$, we say that $\mathcal A$ is a {\it finite-dimensional extension
of} $\mathcal B$ in case there is a finite-dimensional two-sided ideal $I$ of $\mathcal A$ such that $\mathcal A/ I \cong \mathcal B$ \footnote{This is in agreement with the non-universal
convention of calling the algebra $\mathcal A$ above an extension of $\mathcal B$ by $I$.}.

\begin{theorem}\label{theorem:Roe-proper-amenability}
 Let $(X,d)$ be a locally finite extended metric space and let $\K_\mathrm{u}(X)$ be its translation $\K$-algebra.
 Let $n \geq 2$ be a natural number. 
Then the following conditions are equivalent: 
\begin{enumerate}
 \item \label{theorem:Roe-proper-amenability:coarse} $(X,d)$ is properly amenable.
 \item \label{theorem:Roe-proper-amenability:algebraic} $\K_\mathrm{u}(X)$ is properly algebraically amenable.
 \item \label{theorem:Roe-proper-amenability:properly-infinite} $\K_\mathrm{u}(X)$ is not a finite-dimensional extension of a properly infinite $\K$-algebra.
\end{enumerate}
\end{theorem}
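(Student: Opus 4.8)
The plan is to establish the cycle of implications $(\ref{theorem:Roe-proper-amenability:coarse}) \Rightarrow (\ref{theorem:Roe-proper-amenability:algebraic}) \Rightarrow (\ref{theorem:Roe-proper-amenability:properly-infinite}) \Rightarrow (\ref{theorem:Roe-proper-amenability:coarse})$. First I would dispatch the case where $X$ is finite, in which case all three conditions hold trivially: $\K_\mathrm{u}(X)$ is then finite dimensional, hence properly algebraically amenable (take $W = \K_\mathrm{u}(X)$); $X$ is properly amenable by taking $F = X$; and $\K_\mathrm{u}(X)$ is not a finite dimensional extension of a properly infinite algebra, since every quotient of it is again finite dimensional while no finite dimensional algebra is properly infinite (a properly infinite algebra unitally contains the infinite dimensional algebra $L_\infty$). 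So from now on I assume $X$ is infinite; then $\K_\mathrm{u}(X)$ is infinite dimensional, because the idempotents $P_{\{x\}}$, $x \in X$, are linearly independent, and in particular Proposition~\ref{pro:proper-alg-amen2} applies.

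For $(\ref{theorem:Roe-proper-amenability:coarse}) \Rightarrow (\ref{theorem:Roe-proper-amenability:algebraic})$ I would rerun the computation from the proof of the implication $(\ref{theorem:main1:coarse}) \Rightarrow (\ref{theorem:main1:algebraic})$ of Theorem~\ref{theorem:main1}: given $\varepsilon > 0$ and a finite $\cF \subset \K_\mathrm{u}(X)$ all of whose elements have propagation at most $R$, and given any $F \in \mathrm{\mbox{F\o l}}(R, \varepsilon)$, the subspace $W := P_F \K_\mathrm{u}(X) P_F$ satisfies $\dim(TW + W)/\dim(W) \leq 1 + \varepsilon$ for every $T \in \cF$. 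The extra point required for \emph{proper} algebraic amenability is that $W$ can be chosen of arbitrarily large dimension: since $X$ is properly amenable, Lemma~\ref{lem:proper-cardinality} — which remains valid in the extended setting by Remark~\ref{coarse-boundaries} — furnishes, for each $N \in \N$, such an $F$ with $|F| \geq N$, and then $\dim(W) \geq |F| \geq N$ because the $P_{\{x\}}$ with $x \in F$ lie in $W$ and are linearly independent. By condition~(2) of Proposition~\ref{pro:proper-alg-amen2}, $\K_\mathrm{u}(X)$ is properly algebraically amenable.

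For $(\ref{theorem:Roe-proper-amenability:algebraic}) \Rightarrow (\ref{theorem:Roe-proper-amenability:properly-infinite})$ I argue by contradiction: suppose $\K_\mathrm{u}(X)$ is properly algebraically amenable and at the same time a finite dimensional extension of a properly infinite algebra $\cB$, so that there is a finite dimensional two-sided ideal $I$ of $\K_\mathrm{u}(X)$ with $\K_\mathrm{u}(X)/I \cong \cB$. If $I = \{0\}$ then $\K_\mathrm{u}(X) \cong \cB$ is properly infinite; if $I \neq \{0\}$ then Proposition~\ref{pro:quotient-proper} forces $\cB$ to be properly algebraically amenable, hence algebraically amenable. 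Either way $\cB$ is a unital (being a quotient of the unital algebra $\K_\mathrm{u}(X)$), properly infinite, algebraically amenable algebra, which contradicts Corollary~\ref{cor:Properly-infinite}.

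The real content is $(\ref{theorem:Roe-proper-amenability:properly-infinite}) \Rightarrow (\ref{theorem:Roe-proper-amenability:coarse})$, which I would prove in contrapositive form: if $X$ is not properly amenable, then $\K_\mathrm{u}(X)$ is a finite dimensional extension of a properly infinite algebra. If $X$ is not amenable, Theorem~\ref{theorem:main1} shows $\K_\mathrm{u}(X)$ is itself properly infinite, so it is a finite dimensional extension (via the zero ideal) of the properly infinite algebra $\K_\mathrm{u}(X)$. If $X$ is amenable but not properly amenable, Corollary~\ref{cor:characamenable-nprpamen} decomposes $X = Y_1 \sqcup Y_2$ with $Y_1$ finite and non-empty, $Y_2$ non-amenable, and $d(Y_1, Y_2) = \infty$; as $Y_1$ and $Y_2$ are then unions of coarse connected components, Remark~\ref{rem:Roe-direct-sum} yields $\K_\mathrm{u}(X) \cong \K_\mathrm{u}(Y_1) \oplus \K_\mathrm{u}(Y_2)$. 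Here $\K_\mathrm{u}(Y_1)$ is finite dimensional, hence a finite dimensional two-sided ideal of $\K_\mathrm{u}(X)$, while $\K_\mathrm{u}(X)/\K_\mathrm{u}(Y_1) \cong \K_\mathrm{u}(Y_2)$ is properly infinite by Theorem~\ref{theorem:main1} applied to the non-amenable space $Y_2$; so $\K_\mathrm{u}(X)$ is a finite dimensional extension of the properly infinite algebra $\K_\mathrm{u}(Y_2)$, as required. I do not expect a genuine obstacle here, since each step reduces to a previously proved statement; the two places needing a little care are the appeal to the \emph{cardinality} characterization of proper amenability in $(\ref{theorem:Roe-proper-amenability:coarse}) \Rightarrow (\ref{theorem:Roe-proper-amenability:algebraic})$, which is exactly what makes Proposition~\ref{pro:proper-alg-amen2}(2) (rather than plain algebraic amenability) available, and the handling of the degenerate ideal $I = \{0\}$ in the notion of a finite dimensional extension.
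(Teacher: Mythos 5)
Your proposal is correct and follows essentially the same route as the paper: the same cycle of implications, the same use of Lemma~\ref{lem:proper-cardinality} and Proposition~\ref{pro:proper-alg-amen2} to upgrade the F\o lner computation of Theorem~\ref{theorem:main1} to the proper setting, the same appeal to Proposition~\ref{pro:quotient-proper} and Corollary~\ref{cor:Properly-infinite} for the middle implication, and the same decomposition via Corollary~\ref{cor:characamenable-nprpamen} and Remark~\ref{rem:Roe-direct-sum} for the last. Your explicit handling of the degenerate ideal $I=\{0\}$ (which Proposition~\ref{pro:quotient-proper} formally excludes) is a small point of extra care that the paper glosses over, but it does not change the argument.
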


\begin{proof} We may assume that $X$ is infinite. 

(\ref{theorem:Roe-proper-amenability:coarse}) $\Rightarrow$ (\ref{theorem:Roe-proper-amenability:algebraic}): Assume that $(X,d)$ is properly amenable and recall 
the proof of the implication (\ref{theorem:main1:coarse}) $\Rightarrow$ (\ref{theorem:main1:algebraic}) in Theorem~\ref{theorem:main1}. 
For $R > 0$, $\varepsilon >0$ and  $N\in\N$, by Lemma~\ref{lem:proper-cardinality}, we can choose 
a (finite, non-empty) set $F\in\mathrm{\mbox{F\o l}}(R, \frac{\varepsilon}{2})$ with
$|F|\geq 2 N$. Let $F= \bigsqcup_{i\in I} F_i$ be the decomposition of $F$ into its coarse components. Let
\[
	I' := \left\{ i \in I \colon \frac{|\partial_R F_i |}{|F_i|} \leq \varepsilon \right\}
\]
and let $F' := \bigsqcup_{i\in I'} F_i$. We observe that $|F'| \geq \frac{1}{2} |F| \geq N$. Indeed, if this were not true, then 
\[
	\frac{|\partial_R F |}{|F|} \geq \frac{\sum_{i \in I \setminus I'} |\partial_R F_i |}{|F|} > \frac{\sum_{i \in I \setminus I'} \varepsilon | F_i |}{|F|} =  \frac{ \varepsilon | F \setminus F' |}{|F|}  > \frac{ \varepsilon \frac{1}{2} | F |}{|F|}  = \frac{\varepsilon}{2}  \; ,
\]
a contradiction to $F\in\mathrm{\mbox{F\o l}}(R, \frac{\varepsilon}{2} )$. For each $i \in I'$, let $W_i := P_{F_i}  \K_\mathrm{u}(X) P_{F_i}$. Then as in the proof of  the implication (\ref{theorem:main1:coarse}) $\Rightarrow$ (\ref{theorem:main1:algebraic}) in Theorem~\ref{theorem:main1}, we have $\dim W_i = |F_i|^2$ and for any $T$ with propagation no more than $R$, we have $\dim (T W_i + W_i ) \leq |F_i| (|F_i| + |\partial_R F_i|) \leq |F_i|^2 (1 + \varepsilon)$. Hence if we let $W = \sum_{i \in I'} W_i$, we have 
\[
	\dim(W) = \sum_{i \in I'} \dim(W_i ) = \sum_{i \in I'} |F_i|^2 \geq \sum_{i \in I'} |F_i| = |F'| \geq N
\]
and for any $T$ with propagation no more than $R$
\[
	\frac{\dim(TW  +W )}{\dim(W )} = \frac{\sum_{i \in I'} \dim (T W_i + W_i )}{\sum_{i \in I'} \dim(W_i )} \leq \frac{\sum_{i \in I'} |F_i|^2 (1 + \varepsilon)}{\sum_{i \in I'} |F_i|^2} = 1 + \varepsilon
\]
Hence, by Proposition~\ref{pro:proper-alg-amen2}, we have that $\K_\mathrm{u}(X)$ is properly algebraically amenable.

(\ref{theorem:Roe-proper-amenability:algebraic}) $\Rightarrow$ (\ref{theorem:Roe-proper-amenability:properly-infinite}): Suppose that $\K_\mathrm{u}(X)$ is a finite-dimensional extension of 
a properly infinite $\K$-algebra, that is, there is a finite-dimensional two-sided ideal $I$ of $\K_\mathrm{u}(X)$ such that $\K_\mathrm{u}(X) / I$ is properly infinite. By Corollary~\ref{cor:Properly-infinite}, $\K_\mathrm{u}(X) / I$ is not algebraically amenable, and thus not properly algebraically amenable, either. By Proposition~\ref{pro:quotient-proper}, it follows that $\K_\mathrm{u}(X)$ is not properly algebraically amenable.  

(\ref{theorem:Roe-proper-amenability:properly-infinite}) $\Rightarrow$ (\ref{theorem:Roe-proper-amenability:coarse}): Assume that $\K_\mathrm{u}(X)$ is not a finite-dimensional extension of a properly infinite $\K$-algebra. In particular, itself is not properly infinite. Then Theorem~\ref{theorem:main1} implies that $(X,d)$ is amenable. Now suppose that $(X,d)$ were {\em not} a properly amenable metric space. Corollary~\ref{cor:characamenable-nprpamen} shows that there would be a partition $X= Y_1\sqcup Y_2$, where $Y_1$ is a finite non-empty subset of $X$, $Y_2$ is non-amenable and $d(x,y)= \infty $ for $x\in Y_1$ and $y\in Y_2$. As in Remark~\ref{rem:Roe-direct-sum}, this would induce a direct sum decomposition $\K_\mathrm{u}(X) \cong \K_\mathrm{u}(Y_1)\oplus \K_\mathrm{u}(Y_2)$, with $\K_\mathrm{u}(Y_1)$ being finite-dimensional. In particular, $\K_\mathrm{u}(X)$ would be a finite-dimensional extension of $\K_\mathrm{u}(Y_2)$, the latter being properly infinite, again by Theorem~\ref{theorem:main1}. This would contradict our assumption.
\end{proof}

\vspace{1cm}

\paragraph{\bf Acknowledgements:}
The second-named author is partially supported by Deutsche Forschungsgemeinschaft (SFB 878). The third-named author thanks Wilhelm Winter for his kind invitation to the Mathematics 
Department of the University of M\"unster in April 2014 and March-June 2016.
Financial support was provided by the DFG through SFB 878, as well as, by a DAAD-grant during these visits. He would also like to thank the organizers
of the Thematic Program on 
{\em Abstract Harmonic Analysis, Banach and Operator Algebras} at Fields Institute in 
Toronto in May 2014 for the stimulating atmosphere. The fourth-named author is grateful to David Kerr for some very helpful suggestions. 
Part of the research was conducted during visits and workshops at Universitat Aut\`onoma de Barcelona, University of Copenhagen, University of M\"{u}nster and Institut Mittag-Leffler. 
The authors owe many thanks and great appreciation to these institutes and hosts for their hospitality.


\providecommand{\bysame}{\leavevmode\hbox to3em{\hrulefill}\thinspace}

\end{document}